\newcommand{\arxiv}[1]{\href{https://arxiv.org/abs/#1}{\texttt{arXiv:#1}}}
\newtheorem{theorem}{Theorem}[section]
\newtheorem{lemma}[theorem]{Lemma}
\newtheorem{proposition}[theorem]{Proposition}
\newtheorem{corollary}[theorem]{Corollary}
\newtheorem{remark}[theorem]{Remark}
\newtheorem{notation}[theorem]{Notation}
\newtheorem{definition}[subsection]{Definition}
\DeclareMathOperator{\Hom}{\mathrm{Hom}}
\DeclareMathOperator{\ad}{\mathrm{ad}}
\DeclareMathOperator{\Res}{\mathrm{Res}}
\DeclareMathOperator{\Id}{\mathrm{Id}}
\DeclareMathOperator{\low}{\mathrm{low}}
\newcommand{\fh}{\mathfrak{h}}
\newcommand{\fb}{\mathfrak{b}}
\newcommand{\fg}{\mathfrak{g}}
\newcommand{\CB}{{\mathcal {B}}}
\newcommand{\hB}{{\hat{\mathcal {B}}}}
\newcommand{\Bo}{{\mathcal{B}}_{\underline{0}} }
\newcommand{\bbP}{{\mathbb {P}}}
\newcommand{\bbR}{{\mathbb {R}}}
\newcommand{\beq}{\begin{equation}}
\newcommand{\eeq}{\end{equation}}
\newcommand{\bt}{\begin{theorem}}
\newcommand{\et}{\end{theorem}}
\newcommand{\bde}{\begin{definition}}
\newcommand{\ede}{\end{definition}}
\newcommand{\bpr}{\begin{proposition}}
\newcommand{\epr}{\end{proposition}}
\newcommand{\ble}{\begin{lemma}}
\newcommand{\ele}{\end{lemma}}
\newcommand{\bco}{\begin{corollary}}
\newcommand{\eco}{\end{corollary}}
\newcommand{\bre}{\begin{remark}}
\newcommand{\ere}{\end{remark}}
\newcommand{\bpf}{\begin{proof}}
\newcommand{\epf}{\end{proof}}
\def\Square{\Box}
\newcommand{\bc}{\mathbb{C}}
\newcommand{\bz}{\mathbb{Z}}
\begin{document}

\title{ Lie Algebra Cohomology of the positive part of Twisted Affine Lie Algebras}

\author{Jiuzu Hong}
\address{Department of Mathematics, University of North Carolina at Chapel Hill, Chapel Hill, NC 27599-3250, U.S.A.}
\email{jiuzu@email.unc.edu}
\author{Shrawan Kumar}
\address{ 
Department of Mathematics, University of North Carolina at Chapel Hill, Chapel Hill, NC 27599-3250, U.S.A.}
\email{shrawan@email.unc.edu}

\maketitle
\begin{abstract}
The explicit Verlinde  formula for the dimension of conformal blocks, attached to a marked projective curve $\Sigma$, a simple Lie algebra $\fg$ over $\mathbb{C}$  and integrable highest weight modules of a fixed central charge 
of the corresponding affine Lie algebra $\hat{L}(\fg)$ attached to the marked points, requires (among several other important ingredients) a Lie algebra cohomology vanishing result due to C. Teleman for the positive part $\hat{L}^+(\fg)$
with coefficients in the tensor product of an integrable highest weight module with copies of finite dimensional evaluation modules. The aim of this paper is to extend this result of Teleman to a twisted setting where  $\fg$ is endowed with a special automorphism $\sigma$ and the curve $\Sigma$ is endowed with the action of  $\sigma$. In this general setting, the affine Lie algebra gets replaced by twisted affine Lie algebras. The crucial ingredient (as in Teleman) is to prove a certain Nakano Identity. 

\end{abstract}

\section{Introduction} We take the base field in this paper to be the field of complex numbers $\mathbb{C}$. 
The dimension of the space of conformal blocks can be computed by the celebrated Verlinde formula  (cf. \cite[Theorem 4.2.19]{Kbook2}).
 This formula was originally conjectured by Verlinde in Conformal Field Theory. It was mathematically derived by combining the efforts of several mathematicians (cf. \cite[4.C]{Kbook2}). Some of the main ingredients going into its proof consisted of reducing the problem from a genus $g$ curve $\Sigma$ with any number of marked points to a genus $0$ curve (i.e. $\mathbb{P}^1$) with three marked points (by using the Factorization Theorem and the existence of a flat projective connection on the bundle of conformal blocks over the moduli space of curves  \cite{TUY}). The problem for three marked points on   $\mathbb{P}^1$ is encoded in  so called the {\it Fusion Algebra}. Faltings obtained a certain result on the classification of characters of the fusion algebra for the classical simple Lie algebras $\fg$  as well as $\fg$ of type $G_2$. This allows one to derive  the  explicit dimension formula for these Lie algebras. Then, Teleman proved the cohomology vanishing result mentioned in the Abstract (see below for a precise statement) for any simple $\fg$, thereby  one obtains  the  explicit dimension formula for the space of conformal blocks for any simple $\fg$.

Let $\Gamma$ be a finite group acting on a simple Lie algebra $\mathfrak{g}$   and a projective algebraic curve $\Sigma$. One may define the space of twisted conformal blocks in this equivariant setting.  A generalization of the work of Tsuchiya-Ueno-Yamada \cite{TUY} in this setting was developed  in \cite{Da,HK1}. Subsequently, in \cite{HK2} the authors proved that the calculation of the dimension of twisted conformal blocks can be reduced to the situation that $\Gamma$ acts on $\mathfrak{g}$ by diagram automorphisms. Moreover, an explicit  formula for the dimension of the twisted conformal blocks (akin to the classical Verlinde dimension formula)
can be derived assuming a twisted analogue of Teleman's vanishing theorem on the  Lie algebra (co)homology
(cf. \cite[Theorem 6.9]{HK2}).  It should be mentioned that, using the machinery of modular tensor categories, under some restrictions, the dimension formula for the  twisted conformal blocks was obtained in \cite{DM} earlier.  However, our approach to the same problem is quite different. In fact, combining the works of \cite{HK1,HK2,DM}, one can derive the dimension formula for the twisted conformal blocks for a general $\Gamma$-curve $\Sigma$ and any $\Gamma$ action on $\mathfrak{g}$.  

Let $\Gamma=\langle \sigma \rangle$ be the cyclic group acting on $\mathbb{P}^1$ with ramified points only at $\{0,\infty\}$, and $\sigma$  acting on $\mathfrak{g}$ as a special automorphism (cf. Definition \ref{sect_special_aut}).  Let $\hat{L}(\mathfrak{g},\sigma)$ be the associated twisted affine Lie algebra, the central extension of the twisted loop algebra $\left(\mathfrak{g}\otimes \mathbb{C}((t))\right)^\sigma$ and let  $L^+(\mathfrak{g},\sigma): = (t\mathfrak{g}[t] )^\sigma$ be its Lie subalgebra (called the positive part of $\hat{L}(\mathfrak{g},\sigma)$).
  Let $\mathscr{H}_c(\lambda)$ be the integrable highest weight module of $\hat{L}(\mathfrak{g},\sigma)$ of level $c$ and highest weight $\lambda$. We choose $s+1$ points $\vec{z}$ in $\mathbb{C}\subset \mathbb{P}^1$ such that their $\Gamma$-orbits are distinct, where $\vec{z}=(z_0=0, z_1,\dots, z_s)$. At each point $z_i$, we attach the finite dimensional irreducible representation $V(\mu_i)$ of the fixed point Lie subalgebra $\mathfrak{g}^{\Gamma_i}$ with highest weight $\mu_i$,  
   where $\Gamma_i$ is the stabilizer group of $\Gamma$ at $z_i$.  Thus, with our choice, $V(\mu_0)$ is an irreducible representation of 
   $\fg^\sigma$, whereas $V(\mu_i)$ (for $1\leq i\leq s$) is an irreducible representation of $\fg$. 
   The choice of $z_i$  gives rise to an evaluation representation $V(\mu_i)_{z_i}$ of $L^+(\fg,\sigma)$ at $z_i$.  Thus, $L^+(\mathfrak{g},\sigma)$ naturally acts on the tensor product
$$ \hat{\mathscr{H}}_c(\lambda)\otimes V(\vec{\mu})_{\vec{z}}:= \hat{\mathscr{H}}_c(\lambda)\otimes V(\mu_0)_{z_0}\otimes \cdots  \otimes V(\mu_s)_{z_s} , $$ 
where $\hat{\mathscr{H}}_c(\lambda)$ is the   direct {\it product} of the energy eigenspaces of $\mathscr{H}_c(\lambda)$.

Let $H^q(L^+(\mathfrak{g},\sigma),   \hat{\mathscr{H}}_c(\lambda)\otimes V(\vec{\mu}) )$ be the $q$-th Lie algebra cohomology of $L^+(\mathfrak{g},\sigma)$ with coefficients in $\mathscr{H}_c(\lambda)\otimes V(\vec{\mu})_{\vec{z}}$.  Our main result of this paper is Theorem \ref{thm3.15}, which asserts the following. 
\vskip1ex

\noindent
{\bf Theorem.} {\it Let $\sigma$ be a special automorphism of $\fg$ of order $m$ and let $(\fg, m)$ be different from $(D_4, 3)$.
For any $q\geq 1$, the Lie algebra cohomology }
\[ H^q(L^+(\fg,\sigma),  \hat{\mathscr{H}}_c(\lambda)\otimes V(\vec{\mu})_{\vec{z}})^{\fg^\sigma}   =0, \quad \text{ if }\, d_{\vec{\mu}}< \frac{ c+2\check{h}}{m},\]
{\it and the points $\vec{z}= \{z_0 = 0, z_1, \dots, z_s\}$ in the open unit disc are far apart in the hyperbolic metric. 
Here $\check{h}$ is the dual Coxeter number of $\fg$ and $d_{\vec{\mu}}$ is defined by the identity (\ref{d_mu}).}

\vskip1ex

The above  theorem,  in the special case $\sigma$ is the identity automorphism,  recovers Teleman's higher cohomology vanishing result (cf. \cite[Theorem 0 (c)]{Te}).

To prove this vanishing theorem, we generally follow the strategy of Teleman's proof in the untwisted case.  However, there is one major difference in that the M{\"o}bius transformations do not preserve the twisted affine Lie algebras (unlike the untwisted affine case). In the untwisted case, this allowed Teleman to reduce the problem of any marked point $z$ in $\mathbb{C}$ to the point $0$, which made some of the computations much easier to deal with. We establish a twisted analogue of Nakano identity in Theorems \ref{nakano} and \ref{newnakano}.  The proof of this identity requires a cumbersome computation of Laplacian operators, which is carried out in Section \ref{section3}. Some results on Casimir operators on graded Lie algebras in Section \ref{sect_casimir}, especially Proposition \ref{prop_casimir_id} is used crucially in Section \ref{section3}. We would like to stress that, in our above vanishing theorem,  the restriction on $\vec{\mu}$ is not optimal.

As a corollary of the above theorem, we get the following (cf. Corollary \ref{coro3.18}).
\vskip1ex

\noindent
{\bf Corollary.} 
 Under the assumptions of the above theorem, for any $q\geq 1$ we have
  \[ H_q(L^-(\fg,\sigma^{-1}),  \mathscr{H}_c(\lambda^*)\otimes V(\vec{\mu}^*)_{\vec{z}^{-1} } )^{\fg^\sigma}   =0, \quad \text{ if }\, d_{\vec{\mu}}< \frac{ c+2\check{h}}{m},\]
where $\vec{z}^{-1} =(\infty, z_1^{-1}, \cdots, z_s^{-1})$, 
$\mathscr{H}_c(\lambda^*)$ is the highest weight module of the twisted affine Lie algebra $\hat{L}(\fg,\sigma^{-1})$,
and $V(\vec{\mu}^*)_{\vec{z}^{-1} }:=V(\mu_0^*)_{\infty}\otimes \cdots  \otimes V(\mu_s^*)_{z_s^{-1}} $ represents the evaluation representation of $L^-(\fg,\sigma^{-1})$ in the usual sense.  
\vskip1ex

 The above corollary  implies the analogue of the Verlinde dimension formula for twisted conformal blocks on $\mathbb{P}^1$ with three marked points under the restriction on $\vec{\mu}$ and $\sigma$ as in the above corollary (cf. \cite[Theorem 6.9]{HK2}).  

\vskip3ex
\noindent
{\bf Acknowledgements:} We thank Constantin Teleman for many helpful correspondences and conversations. The first author was partially supported by the NSF grant DMS-2001365 and the second
author was partially supported by the NSF grant DMS-1802328.

\section{Preliminaries}
\subsection{Kac-Moody theory}
\label{Kac_sect}

Let $\fg$ be a simple Lie algebra over $\mathbb{C}$.  Let $\sigma$ be an automorphism of order $m$ of $\fg$.  Let $\mathcal{K}$ be the field of Laurent series in the parameter $t$,  such that $\sigma(t)=\epsilon^{-1}t$ where $\epsilon=e^{ \frac{2\pi \mathrm{i}}{m} }$ and $\sigma$ acts on $\mathbb{C}$ trivially.      Let $\mathcal{O}$ be the field of formal power series in $t$. We now define a central extension  $\hat{L}(\fg,\sigma):=\fg(\mathcal{K})^\sigma\oplus \mathbb{C}C$ of $\fg(\mathcal{K})^\sigma$ under the bracket
  \begin{equation}  \label{eq1.1.1.4}
[x[P]+z C, x'[P'] +z' C] =
[x,x' ][PP'] +m^{-1}\Res_{t=0} \,\bigl(({dP})
P'\bigr) \langle x,x'\rangle C,  
 \end{equation}
for  $x[P],x'[P']\in \fg(\mathcal{K})^\sigma$, $z, z'\in\bc$; where
$\Res_{t=0}$ denotes the coefficient of $t^{-1}dt$ and $\langle\,,\,\rangle$ denotes the normalized invariant form on $\fg$
so that  the induced form on $\fg^*$ satisfies $\langle\theta,\theta\rangle =2$ for the highest root $\theta$ of $\fg$.

{\it Throughout the paper, we fix a positive integer (called the level) $c>0$.  We also fix an integer $s>0$ denoting the number of marked points.}

We use $D_{c,\sigma}$ to denote the set of highest weights of $\fg^\sigma$ which parametrizes the integrable highest weight modules of $\hat{L}(\fg, \sigma)$  of level $c$, where the level denotes the action of $C$,  see \cite[Section 2]{HK1}. When $\sigma$ is trivial, we also use $D_c$ to denote this set for brevity.    For each $\lambda\in D_{c,\sigma}$, we will denote by $(\mathscr{H}_c(\lambda), \rho_\lambda)$ (or for simplicity $\mathscr{H}_c(\lambda)$) the associated integrable highest weight module of $\hat{L}(\fg, \sigma)$ of level $c$.

There exists a  `compatible' Cartan subalgebra $\fh$ and a `compatible' Borel subalgebra $\fb \supset \fh$ of $\fg$  both stable under the action of $\sigma$ such that 
\begin{equation}  \label{eq1.1.1.0} \sigma=\tau \epsilon^{{\rm ad} h}, 
\end{equation}
where $\tau$ is a (possibly trivial) diagram automorphism of $\fg$ of order $r$ preserving $\fh$ and $\fb$, $\alpha (h)\in \bz$ for any root $\alpha$ of $\fg$ and $\epsilon^{{\rm ad} h}$ is the inner automorphism of $\fg$ such that for any root $\alpha$ of $\fg$, $\epsilon^{{\rm ad} h}$ acts on the root space $\fg_\alpha$ by the multiplication $\epsilon^{\alpha(h)}$, and $\epsilon^{{\rm ad} h}$ acts on $\fh$ by the identity. Here  $h$ is an element in $\fh^\tau$. In particular,  $\tau$ and $\epsilon^{{\rm ad}h}$ commute.  Moreover,  $r$ divides $m$, $\alpha(h)  \in \mathbb{Z}^{\geq 0}$  for  any positive  root  $\alpha$  of  $\fg^\tau$ and $\theta_0(h)\leq \frac{m}{r}$ where $\theta_0\in (\fh^\tau)^*$ denotes the following weight of $\fg^\tau$:
\[ \theta_0=\begin{cases}  
\text{ highest root of } \fg,   \text{ if } r=1\\
\text{ highest short root  of  } \fg^\tau, \text{ if } r>1 \text{ and }(\fg, r)\neq (A_{2n},2)\\
  2\cdot \text{highest short root} \text{ of } \fg^\tau, \,\text{ if } (\fg,r)=(A_{2n},2).  \end{cases}  \]

Let  $\hat{L}(\fg, \tau)$ denote the Lie algebra with the construction similar to  $\hat{L}(\fg, \sigma)$ where $\sigma$ is replaced by $\tau$, $m$ is replaced by $r$ and $\epsilon$ is replaced by $\epsilon^{\frac{m}{r}}$. There exists an isomorphism of Lie algebras (cf. \cite[Theorem 8.5]{Ka}):
  \begin{equation} \label{neweqn2.3.1}  \Psi_\sigma:   \hat{ L}(\fg, \tau)\simeq   \hat{L}(\fg, \sigma)  
\end{equation}
given by $C\mapsto C$ and
 $x[t^j]\mapsto    x[t^{\frac{m}{r}j+k  } ]$, for any $x$ an  $\epsilon^{\frac{m}{r}j}$-eigenvector of $\tau$, and $x$ also a $k$-eigenvector of ${\rm ad } \,h$.  Then, the isomorphism $\Psi_\sigma$ induces a bijection 
 \begin{equation}
 \label{weight_formula}
   D_{c,\sigma}\simeq D_{c,\tau}, \quad  \lambda \mapsto \bar{\lambda} . \end{equation}

\begin{remark}
{\rm The explicit description of $D_{c, \sigma}$  is given in \cite[Lemma 2.1]{HK1} in terms of $\{n_{\lambda, i}\,|\,   i\in I(\fg^\tau)  \}$ defined there.  Also,  $\bar{\lambda}$ can be expressed in terms of numbers $a_i,a_i^\vee$ which can be read from \cite[p.54-55]{Ka} via \cite[Theorem 8.7]{Ka}. For the convenience of  readers, we would like to point out that there is  a typo in the formula for $\alpha_i^\vee$ in \cite[Theorem 8.7]{Ka}. The correct expression  is: $\alpha_i^\vee=1\otimes H_i + \frac{a_is_i r}{a_i^\vee m} K$. }
\end{remark}
\section{Vanishing theorem for Lie algebra cohomology }
\label{vanishing_Lie_sect}

\subsection{Contravariant Hermitian form on twisted affine algebras}\label{section2.1}
Let $\sigma$ be an automorphism of $\fg$ of order $m$. As in (\ref{eq1.1.1.0}), we write $\sigma=\tau \epsilon^{ad h}$ with respect to a compatible Borel subalgebra $\fb$ and a Cartan sublagebra $\fh$ contained in $\fb$, where $\epsilon=e^{\frac{2\pi \mathrm{i}}{ m }}$.  With respect to $\fb$ and $\fh$, we choose a set of Chevalley generators $\{ e_i,f_i,h_i \,|\, i\in I \}$ such that they are permuted by $\tau$.  

Let $\mathfrak{g}_{\mathbb{R}}$ be the real split form of $\fg$ generated by $\{ e_i,f_i,h_i \,|\, i\in I \}$. Let $\omega$ be the Cartan involution of $\fg$ defined by 
\[ \omega(e_i)=-f_i, \omega(f_i)=-e_i, \omega(h_i)=-h_i,  \quad \text{ for any }  i \in I  .\]
Define a {\it compact (conjugate-linear) involution} $\kappa$ of $\fg$:
\begin{equation}
\kappa=\omega\circ J ,\end{equation}
where $J$ is the complex conjugation of $\fg$ with respect to $\fg_\mathbb{R}$. 

\begin{lemma} 
\label{Lem_6.1}
The equality $\sigma\circ \kappa = \kappa\circ  \sigma$ holds on $\fg$.  
\end{lemma}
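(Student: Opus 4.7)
The plan is to factor $\sigma = \tau \circ \epsilon^{\mathrm{ad}\,h}$ and verify that each factor commutes with $\kappa = \omega \circ J$ separately, so that $\sigma \circ \kappa = \tau \epsilon^{\mathrm{ad}\,h} \kappa = \tau \kappa \epsilon^{\mathrm{ad}\,h} = \kappa \tau \epsilon^{\mathrm{ad}\,h} = \kappa \circ \sigma$.

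For the diagram part: since $\tau$ permutes the Chevalley generators $\{e_i, f_i, h_i\}$, it preserves the real span $\mathfrak{g}_{\mathbb{R}}$, and hence commutes with complex conjugation $J$. It also commutes with $\omega$, because $\omega$ acts on $\{e_i, f_i, h_i\}$ by $e_i \leftrightarrow -f_i$ and $h_i \mapsto -h_i$, and this action is equivariant under any permutation of $I$ induced by $\tau$. Therefore $\tau \kappa = \tau \omega J = \omega \tau J = \omega J \tau = \kappa \tau$.

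For the inner part: recall $h \in \mathfrak{h}^\tau$ satisfies $\alpha(h) \in \mathbb{Z}$ for every root $\alpha$. On a root space $\mathfrak{g}_\alpha$ (which is defined over $\mathbb{R}$, so stable under $J$), the operator $\epsilon^{\mathrm{ad}\,h}$ acts by the scalar $\epsilon^{\alpha(h)}$, while $\kappa$ maps $\mathfrak{g}_\alpha$ into $\mathfrak{g}_{-\alpha}$ conjugate-linearly. Hence, for $x \in \mathfrak{g}_\alpha$,
\[
\kappa\bigl(\epsilon^{\mathrm{ad}\,h}(x)\bigr) = \overline{\epsilon^{\alpha(h)}}\,\kappa(x) = \epsilon^{-\alpha(h)}\kappa(x) = \epsilon^{\mathrm{ad}\,h}\bigl(\kappa(x)\bigr),
\]
using that $\alpha(h)\in\mathbb{Z}$ so $\bar\epsilon^{\alpha(h)} = \epsilon^{-\alpha(h)}$, and that $\kappa(x) \in \mathfrak{g}_{-\alpha}$ picks up the eigenvalue $\epsilon^{-\alpha(h)}$. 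On $\mathfrak{h}$, the operator $\epsilon^{\mathrm{ad}\,h}$ is the identity and $\kappa$ acts as $h' \mapsto -\overline{h'}$, so the two operators trivially commute there. By the root space decomposition, $\epsilon^{\mathrm{ad}\,h}\circ\kappa = \kappa\circ\epsilon^{\mathrm{ad}\,h}$ on all of $\mathfrak{g}$.

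No step should present a genuine obstacle; the only small subtlety is the interplay between the conjugate-linearity of $\kappa$ and the character $\alpha(h) \in \mathbb{Z}$, which is precisely why the integrality condition on $h$ built into the definition of a special (more generally, finite order) automorphism in \eqref{eq1.1.1.0} is needed for the lemma to hold.
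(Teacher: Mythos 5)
Your proof is correct, and it reorganizes the argument rather than merely reproducing the paper's. The paper verifies the single identity $\sigma\circ\kappa=\kappa\circ\sigma$ directly on the Chevalley generators, using $\sigma(e_i)=\epsilon^{\alpha_i(h)}e_{\tau(i)}$, $\sigma(f_i)=\epsilon^{-\alpha_i(h)}f_{\tau(i)}$, $\sigma(h_i)=h_{\tau(i)}$, and then extends to all of $\fg$ because both composites are conjugate-linear Lie algebra automorphisms agreeing on a generating set. You instead factor $\sigma=\tau\circ\epsilon^{\ad h}$ and check that each factor commutes with $\kappa$ separately, using the Chevalley generators for $\tau$ and the full root-space decomposition for $\epsilon^{\ad h}$. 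The computational heart is the same in both: the conjugate-linearity of $\kappa$ turns the phase $\epsilon^{\alpha(h)}$ into $\epsilon^{-\alpha(h)}$, which is exactly the eigenvalue of $\epsilon^{\ad h}$ on $\fg_{-\alpha}=\kappa(\fg_\alpha)$. Your version is a bit more self-contained (no appeal to extension from a generating set), at the cost of length. One minor caveat: your closing sentence slightly overstates the role of the integrality $\alpha(h)\in\mathbb{Z}$ for this particular lemma --- the commutation computation only needs $\lvert\epsilon^{\alpha(h)}\rvert=1$, so that $\overline{\epsilon^{\alpha(h)}}=\epsilon^{-\alpha(h)}$; the integrality is what makes $\epsilon^{\ad h}$ a well-defined automorphism with $\sigma$ of finite order in the first place.
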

\begin{proof}
For each $i\in I$, we have 
\[  \sigma(e_i)=\epsilon^{\alpha_i(h)} e_{\tau(i)} ,     \sigma(f_i)=\epsilon^{-\alpha_i(h)} f_{\tau(i)} ,  \sigma(h_i)= h_{\tau(i)} ,   \]
where $e_i$ belongs to the root space $\fg_{\alpha_i}$. 
It is now easy to see that the equality $\sigma\circ \kappa = \kappa\circ  \sigma$ holds on $\{e_i,f_i,h_i\}$.   Thus, the equality holds on $\fg$.
\end{proof}

Consider the decomposition $\fg=\oplus_{ \underline{n}\in \mathbb{Z} /m\mathbb{Z}} \,  \fg_{ \underline{n}}$, where   $\fg_{ \underline{n}}$ is the $\epsilon^n$-eigenspace of $\sigma$ of $\fg$. Then, $\kappa(    \fg_{ \underline{n}})=  \fg_{ -\underline{n}}$ and $\fg_{\underline{0}}=\fg^\sigma$. 

As before, let $\langle  ,\rangle $ be the normalized invariant form on $\fg$ so that $\langle \theta, \theta\rangle = 2$ for the highest root $\theta$ of $\fg$. Then, $\langle , \rangle:    \fg_{ \underline{n}} \times   \fg_{ -\underline{n}}\to \mathbb{C}$ is non-degenerate for each $ \underline{n}$, and  the eigenspaces  $\fg_{ \underline{n}}$ and  $\fg_{- \underline{n}'}$ for $\underline{n} \neq \underline{n}'$  are orthogonal with respect to $\langle ,\rangle$. 

Define a Hermitian form $\{,\}$ on $\fg$ as follows:
\begin{equation}
  \{x,y\}=-\langle x, \kappa(y) \rangle, \quad  \text{ for any } x,y\in \fg .   \end{equation}
Then, $ \{, \}$ is a positive-definite contravariant Hermitian form on $\fg$, where contravariance means
\[\{[z,x], y\} = -\{x, [\kappa(z), y]\}, \,\,\,\text{for all $x, y, z\in \fg$}.\]
   Moreover,  $\fg=\oplus_{ \underline{n}\in \mathbb{Z} /m\mathbb{Z}} \,  \fg_{ \underline{n}}$ is an orthogonal decomposition with respect to $\{, \}$.  Thus, we can choose an orthonormal basis $\mathcal{B} =\mathcal{B}_\sigma $ with respect to $\{\,,\, \}$ such that $\mathcal{B}_{{\underline{n}}}:=\mathcal{B}\cap \fg_{ \underline{n} }$ is a basis of $ \fg_{ \underline{n} }$. We define a function $[\cdot ]:  \mathcal{B}\to \mathbb{Z}/m\mathbb{Z}$ given by $[b]=\underline{n}$, if $b \in \mathcal{B}_{{\underline{n}}}$. For each $b\in \mathcal{B}$,  set $\check{b}= -\kappa(b)$.  Then, $\{\check{b} \}_{b\in \mathcal{B} }$ is the dual basis of $\mathcal{B}$ with respect to $\langle , \rangle$. Moreover, $[\check{b}]=-\underline{n}$, if $[b]= \underline{n}$.

For $x\in \fg$ and $n\in \mathbb{Z}$, let $x(n) :=x[t^n]$.  The action of $\sigma$ on $\fg(\mathcal{K})$ is given in Section \ref{Kac_sect} and $\hat{L}(\fg, \sigma)$ defined there. Moreover, $\sigma$ acts trivially on $C$. 

Let $\hat{L}^f(\fg)$ denote the Lie subalgebra of $\hat{L}(\fg):= \hat{L}(\fg, I)$ obtained by replacing the function field $\mathcal{K}$ by the ring of Laurent polynomials $\mathbb{C}[t, t^{-1}]$. Similarly, 
$$\hat{L}^f(\fg, \sigma): =  \left(\fg \otimes_{\bc}\bc[t, t^{-1}]\right)^\sigma \oplus  \bc C.$$
Extend  the compact involution $\kappa$ of $\fg$ to $\hat{L}^f(\fg)$ as the conjugate-linear involution defined by 
\begin{equation}
  \kappa(x(n))=\kappa(x)(-n),   \quad  \text{ for any } x(n)\in \hat{L}^f(\fg),\,\,\,\text{and $\kappa(C) = -C$}.  \end{equation}
We take 
$$(\fg_{\bbR}\otimes \bbR[t^\pm]) \oplus \bbR C$$
as the split real form of $\hat{L}^f(\fg)$.

Similarly, we extend the contravariant Hermitian form $\{,\}$ on $\fg$ to $\hat{L}^f(\fg)$ by 
\begin{equation}   \{ x(n),  y(n')   \}= \{x,y\}\delta_{n, n'},\,\,\,\text{and $\{x(n), C\}=0$}.    \end{equation}
Similar to Lemma \ref{Lem_6.1},  $\sigma$ and $\kappa$ commute on $\hat{L}^f(\fg)$ since,  for any $x(n)\in \hat{L}^f(\fg) $,
\[  \sigma\kappa (x(n)) = \sigma ( \kappa (x) (-n) )=\epsilon^n (\sigma\kappa)(x)(-n) , \,\,\text{and $\sigma\kappa (C)=-C$},\]
and 
\[  \kappa\sigma(x(n)) =\kappa  ( \epsilon^{-n}  \sigma(x) (n) ) = \epsilon^n  (\kappa\sigma)(x)(-n) , \,\,\,\text{and $\kappa \sigma (C)= -C$}.  \]

 Thus,  $\kappa$ gives rise to a (conjugate-linear) {\it compact involution} of $\hat{L}^f(\fg,\sigma)$, and the induced Hermitian form $\{, \}$ on $\hat{L}^f(\fg,\sigma)$ is contravariant, and also positive-definite on $\fg[t,t^{-1}]^\sigma$ (cf.\, \cite[\S\S 7.6, 8.4]{Ka}).  Moreover,  $\kappa: L^\pm(\fg,\sigma)\to L^\mp(\fg,\sigma)$ is an anti-linear isomorphism,
 where  $  {L}^\pm(\fg, \sigma) := ( t^{\pm} \fg[t^\pm] )^\sigma$,  and  $  {L}^\pm(\fg, \sigma)$ are Lie subalgebras of $\hat{L}^f(\fg,\sigma)$. Moreover, the restriction of $\{, \}$ to $  {L}^\pm(\fg, \sigma)$ is positive-definite.  We now introduce  two different Hermitian forms $\{,\}_0$ and $\{,\}_1$ on $  {L}^-(\fg, \sigma)$ modified from $\{, \}$ by  defining 
 \begin{equation}
  \{   x(-n), y(-n') \}_0=  \frac{1}{n} \{ x(-n),  y(-n')  \} ,  \quad \text{ for any }  x(-n), y(-n')\in {L}^-(\fg, \sigma)  , \end{equation}
and 
 \begin{equation} \{   x(-n), y(-n') \}_1=  n  \{ x(-n),  y(-n')  \} ,  \quad \text{ for any }  x(-n), y(-n')\in {L}^-(\fg, \sigma). \end{equation}

We introduce the following set
\begin{equation} \hat{\mathcal{B}}_\sigma=\{ b(k) \,|\, k\in \mathbb{Z}_{<0} , b\in \mathcal{B} \,\text{with}\, [b]=\underline{k}  \} . \end{equation}
Then,
$ \hat{\mathcal{B}}_\sigma  $ is an orthonormal basis of $ L^-(\fg, \sigma) $ with respect to the Hermitian form $\{, \}$. Moreover, $ \{ \sqrt{-k} b(k)   \}_{ b(k) \in  \hat{\mathcal{B}}_\sigma}$ is an orthonormal basis of $L^-(\fg,\sigma)$ with respect to the Hermitian form  $\{,\}_0$, and $\{  \frac{1}{\sqrt{-k} }  b(k)   \}_{ b(k)\in  \hat{\mathcal{B}}_\sigma   }$ is an orthonormal basis of $L^-(\fg,\sigma)$ with respect to  $\{,\}_1$.

\begin{notation}\label{notation}
{\rm Fix a level $c>0$. Let $\lambda\in D_{c,\sigma}$, $\mu_0\in D_{c,\sigma^{-1}}$ and $\mu_1,\mu_2,\cdots, \mu_s\in D_c$. For $1\leq i\leq s$, let $V(\mu_i)$ be the irreducible representation of $\fg$ with highest weight $\mu_i$ and let $V(\mu_0)$ be the irreducible representation of $\fg^\sigma$ with highest weight $\mu_0$. 
Let $(\mathbb{P}^1, \vec{z} )$ be a (stable) $s+2$-pointed $\Gamma$-curve (for $s \geq 1$), where $\Gamma=\langle  \sigma\rangle$,  $\sigma$ acts on $\bbP^1$ via $\sigma z=e^{\frac{2 \pi i}{m}}z$ for $z\in \mathbb{P}^1$, $\vec{z}=(\infty, z_0,z_1,\cdots, z_s)$ with $z_0=0$ and $0< |z_i|< 1$ for other $i$. For $0\leq i\leq s$, let $V(\mu_i)_{z_i}$ be the evaluation representation of $\fg[t^{-1}]^\sigma$ with the underlying space $V(\mu_i)$, defined by
\[ x(-n)\cdot v= \bar{z}_i^{n} x\cdot v, \quad \text{for any } x(-n)\in \fg[t^{-1}]^\sigma ,\]
where $\bar{z}_i$ is the complex conjugate of $z_i$.  For each $0\leq i\leq s$, we define a Lie algebra  action of $L^+(\fg,\sigma)$ on $V(\mu_i)_{z_i}$ via 
\[  x(n)\cdot v= z_i^{n} x\cdot v ,  \quad \text{for any } x(n)\in L^+(\fg,\sigma).\]
We further define $C\cdot v =0.$ Observe that these actions {\it do not combine to give a Lie algebra action of $\hat{L}^f(\fg, \sigma)$.}

Set
\[ V(\vec{\mu})_{\vec{z}}=V(\mu_0)_{z_0}\otimes V(\mu_1)_{z_1}\otimes \cdots  \otimes  V(\mu_s)_{z_s}.\]  

We take the positive-definite contravariant Hermitian form on the $\hat{L}(\fg,\sigma)$-module $\mathscr{H}_c(\lambda)$ (cf. \cite[Theorem 2.3.13]{Kbook}), and the $\fg^{\Gamma_{z_i}}$-module  $V(\mu_i)$ (still denoted by $\{,\}$), where $\Gamma_{z_i}$ is the isotropy of $\Gamma$ at $z_i$. }
\end{notation}

Now, the following lemma is a restatement of $\{\,,\,\}$ being contravariant.
\begin{lemma}
\label{lem_hem1}
For any $v,w\in \mathscr{H}_c(\lambda)$ (resp. $V(\mu_i)_{z_i} $), we have 
\[  \{x(n)v, w \}=- \{ v, \kappa(x(n))\cdot w  \}  , \quad \text{ for any } x(n)\in L^f(\fg,\sigma). \]
\end{lemma}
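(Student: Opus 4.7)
The plan is to reduce, on each of the two types of modules, the claimed identity to the already-assumed contravariance of the positive-definite Hermitian form on that module (recorded in Notation \ref{notation}).

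For $v,w\in\mathscr{H}_c(\lambda)$, there is essentially nothing to do: the positive-definite Hermitian form on the integrable highest weight module is, by construction, contravariant with respect to the compact involution on $\hat{L}(\fg,\sigma)$. Since $\kappa$ was defined on $\hat{L}^f(\fg,\sigma)$ precisely by extending the compact involution on $\fg$ via $\kappa(x(n))=\kappa(x)(-n)$ and $\kappa(C)=-C$, and since $L^f(\fg,\sigma)$ is a Lie subalgebra of $\hat{L}^f(\fg,\sigma)$, the identity in the lemma is simply the restriction of the general contravariance to loop elements.

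For $v,w\in V(\mu_i)_{z_i}$, the plan is a direct case-by-case verification from the evaluation formulas. Write $x\in\fg_{\underline{n}}$, so that $x(n)\in L^f(\fg,\sigma)$ and $\kappa(x(n))=\kappa(x)(-n)$ with $\kappa(x)\in\fg_{-\underline{n}}$. For $i\geq 1$ and $n>0$, the formula $x(n)\cdot v=z_i^n\,xv$ yields $\{x(n)v,w\}=z_i^n\{xv,w\}$, while $\kappa(x)(-n)\cdot w=\bar z_i^n\kappa(x)w$ combined with conjugate-linearity of $\{,\}$ in its second slot gives $-\{v,\kappa(x(n))w\}=-\overline{\bar z_i^n}\{v,\kappa(x)w\}=-z_i^n\{v,\kappa(x)w\}$. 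The two sides then match by contravariance of $\{,\}$ on the finite-dimensional $\fg$-module $V(\mu_i)$. The case $n<0$ is symmetric, with the roles of $z_i$ and $\bar z_i$ swapped, and $n=0$ is immediate. At the special point $z_0=0$, the vanishing $z_0^n=\bar z_0^n=0$ for $n\neq 0$ forces both sides to vanish unless $n=0$; in that case $x=x(0)\in\fg^\sigma$ and the identity reduces to contravariance on the $\fg^\sigma$-module $V(\mu_0)$, consistent with $\Gamma_{z_0}=\Gamma$.

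The argument is essentially bookkeeping, so there is no genuine obstacle. The only point that requires care is applying conjugate-linearity of $\{,\}$ in its second argument to the scalar $\bar z_i^n$ appearing in the action of $\kappa(x(n))$; this is exactly what produces the cancellation making the powers of $z_i$ on the two sides agree.
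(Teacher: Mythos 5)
Your proposal is correct and matches the paper's treatment: the paper states the lemma with no proof beyond the remark that it is "a restatement of $\{\,,\,\}$ being contravariant," which is exactly your reduction for $\mathscr{H}_c(\lambda)$, and your explicit bookkeeping with the powers of $z_i$ and $\bar z_i$ (including the degenerate point $z_0=0$) correctly supplies the routine verification for the evaluation modules. No gaps.
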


From the Hermitian form $\{,\}$ on $\hat{L}^f(\fg,\sigma)$, one  introduces a Hermitian form on $\land^q \hat{L}^f(\fg,\sigma)$ in the standard way as follows. For any $\vec{y}=y_1\wedge \cdots \wedge y_q, \vec{u}=u_1\wedge \cdots \wedge u_q$ in $\wedge^q \hat{L}^f(\fg,\sigma)$, define 
\[  \{ \vec{y}, \vec{u}\}:= \det (\{ y_i, u_j\} ) . \]
Then, we have the follow lemma, which is easy to prove.
\begin{lemma}
\label{lem_herm2}
The Hermitian form $\{,\}$ on $\wedge^q \hat{L}^f(\fg,\sigma)$ with respect to the adjoint action of $\hat{L}^f(\fg,\sigma)$ is contravariant, i.e.,
\[  \{ ad_x( \vec{y}   ),  \vec{u}  \}=- \{ \vec{y},  ad_{ \kappa(x) } (\vec{u})   \},    \]
for any $x\in \hat{L}^f(\fg,\sigma)$, and $\vec{y}, \vec{u}\in  \wedge^q \hat{L}^f(\fg,\sigma)$.

Similarly,
$$\langle ad_x( \vec{y}   ),  \vec{u} \rangle = - \langle \vec{y},  ad_{x } (\vec{u})   \rangle.$$
\end{lemma}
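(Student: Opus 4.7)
The plan is to unwind both sides into determinant expressions and reduce to the elementwise contravariance already established on $\hat{L}^f(\fg,\sigma)$ itself. Since $ad_x$ acts on $\wedge^q \hat{L}^f(\fg,\sigma)$ as a derivation of the exterior algebra,
\[ ad_x(y_1\wedge\cdots\wedge y_q)=\sum_{i=1}^q y_1\wedge\cdots\wedge [x,y_i]\wedge\cdots\wedge y_q. \]
Combined with the definition $\{\vec y,\vec u\}=\det(\{y_i,u_j\})$, the left-hand side $\{ad_x\vec y,\vec u\}$ becomes $\sum_{i=1}^q \det M_i$, where $M_i$ is the matrix $A=(\{y_i,u_j\})$ with its $i$-th row replaced by $(\{[x,y_i],u_j\})_j$. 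A parallel expansion rewrites $\{\vec y,ad_{\kappa(x)}\vec u\}$ as $\sum_{j=1}^q \det N_j$, where $N_j$ is $A$ with its $j$-th column replaced by $(\{y_i,[\kappa(x),u_j]\})_i$.

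The contravariance of $\{\,,\,\}$ on $\hat{L}^f(\fg,\sigma)$, verified in the paragraph immediately preceding the lemma, yields the pointwise identity $\{[x,y_i],u_j\}=-\{y_i,[\kappa(x),u_j]\}$. Setting $a_{ij}:=\{[x,y_i],u_j\}$, the quantity $\sum_i \det M_i$ is the first-order variation of $\det A$ in the direction of the matrix $(a_{ij})$ computed by row expansions, while $\sum_j \det N_j$ is the same variation computed by column expansions but with entries $(-a_{ij})$. Since both the row-by-row and column-by-column expansions of the derivative of $\det$ at $A$ yield the common value $\sum_{i,j}a_{ij}\,C_{ij}$ against the cofactors $C_{ij}$ of $A$, we conclude $\sum_i \det M_i=-\sum_j \det N_j$, which is exactly $\{ad_x\vec y,\vec u\}=-\{\vec y,ad_{\kappa(x)}\vec u\}$.

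The second statement follows from an identical argument, replacing $\kappa$ by the identity throughout and invoking the invariance relation $\langle[x,y],u\rangle=-\langle y,[x,u]\rangle$ of $\langle\,,\,\rangle$ in place of contravariance. There is no genuine obstacle here: the only content beyond bookkeeping is the classical linear-algebra fact that the derivative of $\det A$ in any matrix direction $B$ can be computed equivalently by summing row-perturbed or column-perturbed determinants, both yielding the trace of $B$ against the cofactor matrix of $A$. One could equally well present the proof by induction on $q$, reducing the step from $\wedge^{q-1}$ to $\wedge^q$ via the Leibniz rule for $ad_x$ and the expansion of $\{\vec y,\vec u\}$ along the first row or column, but the determinantal identity above makes the sign match transparent in one step.
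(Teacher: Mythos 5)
Your proof is correct: expanding $ad_x$ as a derivation on $\wedge^q$, writing both sides as sums of row- (resp.\ column-) perturbed determinants, and matching them through the common cofactor expansion $\sum_{i,j}a_{ij}C_{ij}$ using the elementwise contravariance $\{[x,y_i],u_j\}=-\{y_i,[\kappa(x),u_j]\}$ (and invariance of $\langle\,,\,\rangle$ for the second identity) is exactly the intended verification. The paper states this lemma without proof, calling it ``easy to prove,'' and your argument supplies the standard justification it has in mind.
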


Let $\bar{L}^-(\fg)$ (resp. $\bar{L}^-(\fg, \sigma)$ be the $L^2$-completion of ${L}^-(\fg)$ (resp. ${L}^-(\fg, \sigma)$)
with respect to the positive definite Hermitian form $\{ ,\}_0$ on ${L}^-(\fg)$ (resp. ${L}^-(\fg, \sigma)$).

\subsection{Chain complex}\label{chain}

Set $ M_q:= \wedge^q( L^-(\fg,\sigma))  \otimes \mathscr{H}_c(\lambda) \otimes V(\vec{\mu})_{\vec{z}} $  as a representation of $\fg[t^{-1}]^\sigma$.   We first consider  the standard chain complex $(C_*, \partial_*)$ of $L^-(\fg,\sigma)$ with coefficients in $M_q$ (cf. \cite[$\S$3.1.1]{Kbook}).   We calculate the formal adjoint operator $\partial_1^*$ of $\partial_1$ with respect to the Hermitian form $\{,\}_1$ on $L^-(\fg,\sigma)$, the Hermitian form $\{,\}_0$ on $\wedge^q {L}^-(\fg,\sigma)$ (tensor component of $M_q$), and the contravariant Hermitian forms $\{,\}$ on $\mathscr{H}_c(\lambda)$ and $V(\vec{\mu})_{\vec{z}}$.

Consider the standard differential  $\partial_p: \wedge^p (L^-(\fg, \sigma))\otimes  M_q\to  \wedge^{p-1} (L^-(\fg, \sigma))\otimes 
M_q$. In particular, when $p=1$, $\partial_1( x(-n )\otimes m ) = -x(-n)\cdot m $, for any $x(-n)\in  L^-(\fg, \sigma) $ and $m\in M_q$.    Take $\vec{x}(-\vec{p}):= x_1(-p_1)\wedge \cdots \wedge x_q(-p_q) \in \wedge^q L^-(\fg,\sigma)$, and $v\in \mathscr{H}_c(\lambda) \otimes V(\vec{\mu})_{\vec{z}}  $. Also, take an orthonormal basis $\{v_\phi\}_\phi$ of $\mathscr{H}_c(\lambda) \otimes V(\vec{\mu})_{\vec{z}} $. Then,  we get (where $ \hat{\mathcal{B}}_\sigma^q$ is an orthonormal basis of $\wedge^q L^-(\fg,\sigma)$  with respect to the Hermitian form $\{\,,\,\}$ obtained as wedge product of an orthonormal basis of $L^-(\fg, \sigma)$ and for $\vec{n}= (n_1, \dots, n_q), |\vec{n}|:=n_1\cdots n_q$)

\begin{align} \label{eqnnew121}
 \partial_1^* ( \vec{x}(-\vec{p})\otimes v ) &=\sum_{\vec{a}( -\vec{n}) \in \hat{\mathcal{B}}_\sigma^q, b(-k)\in \hat{\mathcal{B}}_\sigma, v_\phi   }  \frac{| \vec{n}| }{k}  \big \{  \partial_1^*(\vec{x}(-\vec{p})\otimes v  ) , b(-k) \boxtimes \vec{a} (-\vec{n})\otimes v_\phi  \big \}\,  b(-k) \boxtimes \vec{a}(-\vec{n})\otimes v_\phi   \notag \\
  &=-\sum_{\vec{a}( -\vec{n}) \in \hat{\mathcal{B}}_\sigma^q, b(-k)\in \hat{\mathcal{B}}_\sigma , v_\phi  }     \Big (  \frac{|\vec{n}|}{k} \big \{  \vec{x}(-\vec{p})\otimes v   ,    ad_{b(-k)} ( \vec{a} (-\vec{n})   )\otimes v_\phi  \big\}\,  b(-k) \boxtimes \vec{a}(-\vec{n})\otimes v_\phi  \notag\\
   &\quad \quad \quad\quad \quad   +   \frac{ |\vec{n} | }{k} \big \{  \vec{x}(-\vec{p})\otimes v   ,      \vec{a}(-\vec{n})   \otimes b(-k)\cdot  v_\phi  \big \} \, b(-k) \boxtimes \vec{a}(-\vec{n})\otimes v_\phi   \Big ) \notag\\
 &= -\sum_{\vec{a}( -\vec{n}) \in \hat{\mathcal{B}}_\sigma^q, b(-k)\in \hat{\mathcal{B}}_\sigma , v_\phi  }  \Big (  \frac{ |\vec{n}| }{k}  \big \{  \vec{x}(-\vec{p})  ,    ad_{b(-k)} (  \vec{a} (-\vec{n}) )  \big \}_0  \big \{ v, v_\phi  \big \} \, b(-k) \boxtimes \vec{a}(-\vec{n})\otimes v_\phi  \notag\\
 &\quad \quad \quad\quad \quad   +   \frac{ |\vec{n}| }{k} \big \{  \vec{x}(-\vec{p})  ,    \vec{a} (-\vec{n})  \big \}_0 \{ v,     b(-k)\cdot v_\phi  \big \} \,   b(-k) \boxtimes \vec{a}(-\vec{n})\otimes v_\phi      \Big ) \notag\\
&= -\sum_{\vec{a}( -\vec{n}) \in \hat{\mathcal{B}}_\sigma^q, b(-k)\in \hat{\mathcal{B}}_\sigma , v_\phi  }  \Big ( \frac{ |\vec{n}| }{k |\vec{p}|}   \big \{ ad_{\check{b}(k)} ( \vec{x}(-\vec{p}) ) ,     \vec{a} (-\vec{n})  \big \}  \big  \{ v, v_\phi  \big \} \, b(-k) \boxtimes \vec{a}(-\vec{n})\otimes v_\phi  \notag \\
 &  -  \frac{ |\vec{n}| }{k}  \big \{  \vec{x}(-\vec{p})  ,    \vec{a} (-\vec{n}) \big \}_0 \big \{ \check{b}(k)\cdot v,     v_\phi \big \}  \,  b(-k) \boxtimes \vec{a}(-\vec{n})\otimes v_\phi   \Big ), \quad \text{by Lemmas \ref{lem_hem1} and  \ref{lem_herm2} }\notag\\
 &=-\sum_{b(-k)\in \hat{\mathcal{B}}_\sigma } \Big ( \sum_{i=1}^q \left(\frac{p_i-k}{kp_i} b(-k) \boxtimes \overline{ad}^i_{\check{b}(k) } ( \vec{x}(-\vec{p}) )\otimes v\right)  +   \frac{b(-k)}{k}\boxtimes \vec{x}(-\vec{p})\otimes \check{b}(k)\cdot v \Big  ),
  \end{align}
where $ad^i_{\check{b}(k)} (  \vec{x}(-\vec{p}) )$ denotes the action only on the $i$-th factor, and $\overline{ad}$ denotes the projection onto $L^-(\fg,\sigma)$.  Observe that the second term of the above sum is an infinite sum taking values in $\bar{L}^-(\fg,\sigma)\otimes M_q$.

\vskip1ex
From the above equation \eqref{eqnnew121} and the definition of $\partial_1$, the following proposition follows easily.
\begin{proposition}\label{newprop6.5} Define $\Box=\partial_1\circ \partial_1^*$.  Then, as an operator from $M_q$ to $\bar{M}_q:= 
 \overline{\wedge^q({L}^-(\fg,\sigma))  \otimes \mathscr{H}_c(\lambda) \otimes V(\vec{\mu})_{\vec{z}}} $, 
\begin{align*}
 \Box(\vec{x}(-\vec{p})\otimes v) &=  \sum_{b(-k)\in \hat{\mathcal{B}}_\sigma }  \sum_{i=1}^q \left( \frac{p_i-k}{kp_i}  \Big ( ad_{ b(-k) }\big ( \overline{ad}^i_{\check{b}(k) } ( \vec{x}(-\vec{p}) ) \big)\otimes v +  \overline{ad}^i_{\check{b}(k) } ( \vec{x}(-\vec{p}) )\otimes b(-k)\cdot v\Big )\right)\\
& \quad + \sum_{ b(-k)\in \hat{\mathcal{B}}_\sigma} \frac{1}{k}  \Big ( ad_{b(-k)}( \vec{x}(-\vec{p})  ) \otimes  \check{b}(k)\cdot v +\vec{x}(-\vec{p})\otimes b(-k) \check{b}(k)\cdot v   \Big ),
\end{align*}
where $\bar{M}_q$ is the $L^2$-completion of $M_q$ with respect to the metric $\{\,,\,\}0$ on $L^-(\fg, \sigma)$ and the standard metrics on 
 $\mathscr{H}_c(\lambda)$ and  $V(\vec{\mu})_{\vec{z}}$ as in Subsection \ref{chain}.
  
Observe that the above sum makes sense as an element of $\bar{M}_q$ since each $|z_i|<1$ by assumption.
\end{proposition}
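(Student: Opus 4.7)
The plan is to compose $\partial_1$ with the explicit expression for $\partial_1^*(\vec{x}(-\vec{p}) \otimes v)$ derived in \eqref{eqnnew121}. Recall that on $L^-(\fg,\sigma) \otimes M_q$ the chain differential acts by $\partial_1(y(-k) \otimes m) = -y(-k) \cdot m$, where $\cdot$ denotes the $L^-(\fg,\sigma)$-action on $M_q = \wedge^q L^-(\fg,\sigma) \otimes \mathscr{H}_c(\lambda) \otimes V(\vec{\mu})_{\vec{z}}$. Because $M_q$ is built as a tensor product of three $L^-(\fg,\sigma)$-modules, this action decomposes by Leibniz into $ad_{y(-k)}$ on the wedge factor plus the affine and evaluation actions on $\mathscr{H}_c(\lambda)$ and $V(\vec{\mu})_{\vec{z}}$ respectively.

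Next I would expand each of the two families of pure tensors in \eqref{eqnnew121} separately. The wedge-type family $\frac{p_i-k}{k p_i} b(-k) \boxtimes \overline{ad}^i_{\check{b}(k)}(\vec{x}(-\vec{p})) \otimes v$ contributes, under $-\partial_1$, a term in which $b(-k)$ adjoins the wedge factor and another in which $b(-k)$ acts on $v$; these give the first two terms of the stated formula for $\Box$. The module-type family $\frac{1}{k} b(-k) \boxtimes \vec{x}(-\vec{p}) \otimes \check{b}(k) \cdot v$ likewise contributes, under $-\partial_1$, a term in which $b(-k)$ adjoins $\vec{x}(-\vec{p})$ and a term in which $b(-k)$ composes with $\check{b}(k)$ on $v$; these give the last two terms. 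The overall minus signs from $\partial_1$ and from \eqref{eqnnew121} cancel, and collecting the four families yields precisely the claimed expression.

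The only nontrivial point is convergence, since \eqref{eqnnew121} is a priori an infinite sum landing in the completion $\bar{L}^-(\fg,\sigma) \otimes M_q$, and one must know that $\partial_1$ extends to this completion so that the composite is meaningful in $\bar{M}_q$. For the wedge-type family, $\overline{ad}^i_{\check{b}(k)}(\vec{x}(-\vec{p}))$ vanishes unless the commutator $[\check{b}(k), x_i(-p_i)]$ has an $L^-$-component, i.e.\ unless $k < p_i$, so in fact only finitely many $k$ contribute and convergence is automatic. For the module-type family, the assumption $|z_i| < 1$ (with $z_0 = 0$) is decisive: the evaluation action of $\check{b}(k)$ on each $V(\mu_i)_{z_i}$ is scaled by $z_i^k$ (and the subsequent $L^-$-action of $b(-k)$ by $\bar z_i^k$), yielding geometric decay that, together with the $1/k$ prefactor and the finiteness of $\mathcal{B}$, gives absolute convergence of the sum in $\bar{M}_q$ with respect to the $\{,\}_0$-norm on $L^-(\fg,\sigma)$ and the standard Hermitian norms on $\mathscr{H}_c(\lambda)$ and $V(\vec{\mu})_{\vec{z}}$. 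This convergence check is the main technical point; once it is in place, the rest is purely bookkeeping, which is why the proposition is asserted to follow easily from \eqref{eqnnew121}.
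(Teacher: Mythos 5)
Your proposal is correct and follows exactly the route the paper takes: the paper gives no argument beyond noting that the proposition "follows easily" from the formula \eqref{eqnnew121} for $\partial_1^*$ and the definition $\partial_1(b(-k)\otimes m)=-b(-k)\cdot m$, which is precisely the composition-and-Leibniz bookkeeping you carry out, with the two minus signs cancelling. Your convergence discussion (finiteness of the sum for the $\overline{ad}^i_{\check b(k)}$ terms since they vanish for $k\geq p_i$, and geometric decay from $|z_i|<1$ for the module-type terms) is more detailed than the paper's one-line remark and is sound.
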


\begin{definition} \label{sect_special_aut}{\rm 
An automorphism $\sigma$ of $\fg$ is called $\mathbf{special}$ if $\sigma$ is a   diagram automorphism (including the identity automorphism), or an order 4 automorphism of $\fg$ or its inverse when $\fg$ is of type $A_{2n}$, which is defined as follows. Let $e_i,f_i,h_i, i=1,\cdots, 2n$,  be the set of Chevalley generators. The automorphism $\sigma$ of $\fg$ is defined such that 
\begin{equation}
\label{aut_def1}
\begin{cases}
\sigma(e_i)=e_{\tau(i)},   \quad  \text{ if } i\not= n, n+1;    \\
 \sigma(e_i)=\sqrt{-1} e_{\tau(i)},   \quad \text{ if } i\in \{n, n+1\} ; \\
  \sigma(f_\theta)=f_\theta ,
\end{cases},
\end{equation}
where $\theta$ is the highest root of $\fg$ and $\tau$ is the nontrivial diagram automorphism.  In fact, we can write 
\beq  
\label{as_auto}
 \sigma= \tau \sqrt{-1}^{{\rm ad }h },\eeq
 where $h\in \fh$ is such that $\alpha_i(h)=0$ if $i\not=n, n+1$ and $\alpha_i(h)=1$ if $i=n,n+1$. As mentioned above, $\sigma^{-1}$ is also a special automorphism.

We call a special automorphism $\sigma$ to be a {\it standard special} automorphism (or simply a {\it standard} automorphism) if $\sigma$ is special and is not a nontrivial diagram automorphism when $\fg$ is of type $A_{2n}$. (Observe that standard means that $\fg^\sigma$ is the same as $\mathring{\fg}$ as  defined in \cite{Ka}.)

The following table describes the fixed point Lie algebra for all the special automorphisms (cf.\,\cite[Section 2.1]{BH}):
 \begin{equation}
\label{Fix_table}
 \begin{tabular}{|c  | c | c |c |c |c | c| c| c|c |c|c|c|c|c |c ||} 
 \hline
$(\fg, m)$   & $(A_{2n-1}, 2 ) $  &  $(A_{2n}, 4)$  &   $(A_{2n}, 2)$  &  $(D_{n+1} , 2  ) $  &           $ (D_4,   3)$  &  $ (E_6,  2)$    \\ [0.8ex] 
 \hline
$ \fg^\sigma $  &  $ C_n $  &   $ C_n $   &    $B_n$ &  $B_n $   &    $G_2 $ &  $F_4 $  \\ [0.8ex] 
 \hline
\end{tabular},
\end{equation}
where by convention $C_1$ and $B_1$ are $A_1$ and $n\geq 3$ for $D_{n+1}$.  }
\end{definition}

\subsection{Casimir operators on graded Lie algebras}
\label{sect_casimir}

Recall that $\langle , \rangle$ is the normalized invariant form on $\fg$. It induces an invariant form on $\fg^\sigma$ via restriction, which is still denoted by $\langle , \rangle$.
\begin{lemma}
\label{pairing_cox_lem}Let $\sigma$ be a standard automorphism of $\fg$  of order $m$ and let $\theta_\sigma$ be the element defined below:

\beq
\label{theta_1}
 \theta_\sigma=\begin{cases}
\text{ highest short root of   } \fg^\sigma,    \quad  (\fg, m)\not= (A_{2n}, 4)\\
\frac{1}{2}\text{ highest  root of $\fg^\sigma$},    \quad  (\fg, m)= (A_{2n}, 4).

 \end{cases}   \eeq

 Then, $\langle \theta_\sigma, \theta_\sigma+2\rho_\sigma \rangle =\frac{2\check{h}}{m}$, where $\check{h}$ as earlier is the dual Coxeter number  of $\hat{L}(\fg, \sigma)$ (or $\fg$), and $\rho_\sigma$ is half the sum of positive roots of $\fg^\sigma$.
\end{lemma}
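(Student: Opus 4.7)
The identity can be established by case-by-case verification, distinguishing the trivial automorphism $m=1$ from the five non-trivial standard automorphism types of \eqref{Fix_table}. The unifying observation, which I would establish first, is that in every standard case the restriction of the normalized form $\langle,\rangle$ from $\fg$ to $\fg^\sigma$ coincides with the normalized form on $\fg^\sigma$ itself (i.e., long roots of $\fg^\sigma$ have squared length $2$ under the restricted form). Granting this, the identity reduces to a universal computation inside the simple Lie algebra $\fg^\sigma$.

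For the trivial case $m=1$, $\fg^\sigma = \fg$, and $\theta_\sigma$ is taken to be the highest root $\theta$ of $\fg$ (the natural convention in the untwisted affine setting). Then $\langle\theta,\theta\rangle = 2$ by normalization and $\langle\theta,\rho\rangle = \rho(\theta^\vee) = \check{h} - 1$ (using that $\theta^\vee = \theta$ under $\langle,\rangle$), so $\langle\theta,\theta+2\rho\rangle = 2\check{h} = 2\check{h}/m$.

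For the nontrivial diagram automorphism cases ($m = r \in \{2,3\}$, $\fg \neq A_{2n}$), $\theta_\sigma$ is the highest short root of $\fg^\sigma$, and I would invoke the following universal identity: for any simple Lie algebra $\fh$ with lacing number $k$ (so $\langle\phi,\phi\rangle = 2/k$ for a short root $\phi$ under the normalized form on $\fh$), if $\phi$ denotes the highest short root, then $\langle\phi, \phi + 2\rho_\fh\rangle = 2h(\fh)/k$, where $h(\fh)$ is the Coxeter number of $\fh$. The proof is one line: $\phi^\vee = k\phi$ is the highest root of the dual root system (which has the same Coxeter number $h(\fh)$), hence $\rho_\fh(\phi^\vee) = h(\fh) - 1$, giving $\langle\phi,\rho_\fh\rangle = (h(\fh)-1)/k$. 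Substituting into each of the cases $(A_{2n-1},2), (D_{n+1},2), (E_6,2), (D_4,3)$ --- with $\fg^\sigma = C_n, B_n, F_4, G_2$ and using $h(C_n) = h(B_n) = 2n$, $h(F_4) = 12$, $h(G_2) = 6$ together with $\check{h}(A_{2n-1}) = \check{h}(D_{n+1}) = 2n$, $\check{h}(E_6) = 12$, $\check{h}(D_4) = 6$ --- one verifies $2h(\fg^\sigma)/k = 2\check{h}(\fg)/m$ in each case. The order-$4$ case on $A_{2n}$ is handled by direct computation: $\fg^\sigma = C_n$ and $\theta_\sigma = \tfrac12\cdot(\text{highest root of }C_n) = \epsilon_1$ in standard coordinates; the normalization $\langle 2\epsilon_1, 2\epsilon_1\rangle = 2$ forces $\langle\epsilon_1,\epsilon_1\rangle = 1/2$, and $\langle\epsilon_1, \rho_{C_n}\rangle = n/2$, yielding $\langle\theta_\sigma,\theta_\sigma+2\rho_\sigma\rangle = 1/2 + n = (2n+1)/2 = 2\check{h}(A_{2n})/4$, as required.

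The main obstacle is the unifying observation that the restricted form on $\fg^\sigma$ from $\fg$ agrees with the normalized form on $\fg^\sigma$. For the diagram automorphism cases, this can be verified through the folding construction: given a $\tau$-orbit $\{\alpha, \tau\alpha, \ldots\}$ of roots of $\fg$, the corresponding root of $\fg^\tau$ (the common restriction to $\fh^\tau$, viewed via averaging in $\fh^*$) has squared length $\tfrac{1}{|\text{orbit}|}\sum_{\alpha, \beta \in \text{orbit}}\langle\alpha,\beta\rangle$, which one checks case by case to match the expected length in the normalized root system of $\fg^\sigma$. For the $(A_{2n},4)$ case, where $\sigma = \tau\sqrt{-1}^{\ad h}$ is not itself a diagram automorphism, an explicit matrix calculation is needed; alternatively the normalization is implicit in Kac's construction of twisted affine Lie algebras \cite[Ch.~6--8]{Ka}, where the form on $\hat L(\fg,\sigma)$ is built so that its restriction to $\fg^\sigma$ has the desired property.
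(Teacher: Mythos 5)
Your proposal is correct and follows essentially the same route as the paper's proof: both arguments reduce the identity to (i) the normalization fact that the form restricted from $\fg$ satisfies $\langle \theta_\sigma,\theta_\sigma\rangle = 2/m$ (the paper states this as $\langle \check{\theta}_\sigma,\check{\theta}_\sigma\rangle = 2m$, which is equivalent to your observation that long roots of $\fg^\sigma$ keep squared length $2$), and (ii) the height computation $\rho_\sigma(\theta_\sigma^\vee)=h(\fg^\sigma)-1$ combined with the case-by-case coincidence $2h(\fg^\sigma)/k = 2\check{h}(\fg)/m$, with the $(A_{2n},4)$ case done by hand in both. The one slip is your folding formula for the squared length of a restricted root, which should be $\frac{1}{|\mathrm{orbit}|^2}\sum_{\alpha,\beta\in \mathrm{orbit}}\langle\alpha,\beta\rangle$ rather than $\frac{1}{|\mathrm{orbit}|}\sum_{\alpha,\beta\in \mathrm{orbit}}\langle\alpha,\beta\rangle$ --- harmless for the orbit-size-one (long-root) case that your normalization claim actually rests on, but as written it would return the wrong length for the short roots.
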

\begin{proof}
Let $(,)$ be the standard pairing between $\fh^\sigma$ and $(\fh^\sigma)^*$. Then, 
\[ \langle \theta_\sigma, \theta_\sigma+2\rho_\sigma \rangle =\frac{ ( 2\check{\theta}_\sigma,    \theta_\sigma+ 2 \rho_\sigma  )}{ \langle\check{\theta}_\sigma,  \check{ \theta}_\sigma   \rangle },\]
where $\check{\theta}_\sigma$ is the element defined below:

\beq
\label{theta_2}
\check{ \theta}_\sigma=\begin{cases}
\text{ highest coroot of   } \fg^\sigma,    \quad  (\fg, m)\not= (A_{2n}, 4)\\
2\cdot \text{ highest short coroot of  } \fg^\sigma ,   \quad  \quad  (\fg, m)= (A_{2n}, 4).
 \end{cases}   \eeq

 When $(\fg,m)$ is different from  $(A_{2n}, 4)$, as $\check{\theta}_\sigma$ is the highest coroot, $ \langle\check{\theta}_\sigma,  \check{ \theta}_\sigma   \rangle = 2m$.  When $(\fg,m)$ is $(A_{2n}, 4)$, from an explicit realization of $\fg^\sigma$, one can see that for any short coroot $\check{\beta}$, $\langle\check{\beta}, \check{\beta}\rangle=2$. Since $\check{\theta}_\sigma$ is twice the highest short coroot,   $ \langle\check{\theta}_\sigma,  \check{ \theta}_\sigma   \rangle = 8=2m$.
\end{proof}

Let $\sigma$ be a special automorphism of $\fg$ of order $m$. Let $\theta_{l}$  denote the highest root of $\fg^\sigma$ and let $\theta_s$ denote the highest short root of $\fg^\sigma$. Let $\theta_{\underline{i}}$ be the unique  maximal weight 
(with respect to the Bruhat-Chevalley order on the set of weights) of $\fg^\sigma$ that appears in $\fg_{\underline{i}}$. 
\begin{lemma}
\label{lem_hw_g_i}
 We have 
\[  \theta_{\underline{i}}=  \begin{cases}     
\theta_l  \quad   \text{ if }\, \underline{i}=\underline{0} \\
\theta_s  \quad   \text{ if }\, \fg\not= A_{2n} \text{ and } (m, \underline{i})= (2, \underline{1}) \text{ or }  (3, \underline{1}) \,\text{or} \, (3, \underline{2})\\ 
0\quad    \text{ if }\, \fg = A_{2} \, \text{and} 
   \,  (m, \underline{i})=(4, \underline{2}) \\
\theta_s\quad    \text{ if }\, \fg = A_{2n}  (n>1)\, \text{and} \,
     (m, \underline{i})=(4, \underline{2}) \\

\frac{\theta_l}{2}   \quad   \text{ if } 
\, \fg = A_{2n}  (n\geq 1)\, \text{and} \,
 (m, \underline{i})= (4, \underline{1}),  (m, \underline{i})= (4, \underline{3})  \\
2\theta_s   \quad  \text{ if } \, (\fg, m)= (A_{2n}, 2) \text{ and } \underline{i}=\underline{1}.
\end{cases} . \]

If $(\fg,m)\not= (A_{2n}, 4 )$ or $\underline{i}\not= \underline{2} $, $\fg_{\underline{i}}$ is an irreducible representation of $\fg^\sigma$ of highest weight $\theta_{\underline{i}}$. If  $(\fg,m)= (A_{2n}, 4 ) (n>1)$,   $\fg_{\underline{2}}\simeq V(\theta_s )\oplus  \mathbb{C}$. If  $(\fg,m)= (A_{2}, 4 )$,   $\fg_{\underline{2}}\simeq  \mathbb{C}$. 
\end{lemma}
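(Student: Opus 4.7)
The plan is to prove the lemma by a case-by-case analysis grounded in the explicit decomposition $\sigma = \tau\epsilon^{\mathrm{ad}\,h}$ from (\ref{eq1.1.1.0}) and the classical theory of eigenspace decompositions of simple Lie algebras under finite-order automorphisms from Kac \cite[Ch.~8]{Ka}.

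The case $\underline{i}=\underline{0}$ is immediate since $\fg_{\underline{0}}=\fg^\sigma$ and $\theta_l$ is by definition the highest root of $\fg^\sigma$. For the nontrivial diagram-automorphism cases (i.e.\ $h=0$ and $\sigma=\tau$ of order $m\in\{2,3\}$), I will restrict the highest root $\theta$ of $\fg$ to $\fh^\tau\subset\fg^\tau$: the projection of the root vector $e_\theta$ onto the $\tau$-eigenspaces yields a highest-weight vector of $\fg_{\underline{i}}$ as a $\fg^\tau$-module, with weight $\theta|_{\fh^\tau}$. For $\fg\neq A_{2n}$, the $\tau$-orbit of $\theta$ has size $m$, the projections yield nonzero vectors in each $\fg_{\underline{i}}$ whose $\fg^\tau$-weight equals the highest short root $\theta_s$ of $\fg^\tau$, and irreducibility follows from the standard structure theory of the adjoint module for the twisted affine Kac-Moody algebra $X_N^{(m)}$. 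For $(A_{2n},2)$ with $\tau$ the diagram involution of $\mathrm{sl}_{2n+1}$, a direct computation (using $\tau(E_{ij})=-E_{2n+2-j,\,2n+2-i}$ in a split realization) shows that $\tau$ fixes $\theta$ as a root but acts on $e_\theta$ by $-1$, placing $e_\theta\in\fg_{\underline{1}}$ with $\fg^\tau$-weight $\theta|_{\fh^\tau}=2\theta_s$, where $\theta_s$ is the highest short root of $\fg^\tau=B_n$; a dimension count against Weyl's formula then confirms irreducibility of $\fg_{\underline{1}}=V(2\theta_s)$.

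The main work concerns the order-$4$ case $(A_{2n},4)$, where $\sigma=\tau\sqrt{-1}^{\mathrm{ad}\,h}$ with $\alpha_i(h)=\delta_{i,n}+\delta_{i,n+1}$. Since $\tau(h)=h$, the two commuting automorphisms $\tau$ and $\sqrt{-1}^{\mathrm{ad}\,h}$ jointly decompose $\fg$ by pairs $(k,\eta)$, where $k=\alpha(h)\in\{-2,-1,0,1,2\}$ (with $\fh$ contributing to $k=0$) and $\eta=\pm 1$ is the $\tau$-eigenvalue. The $\sigma$-eigenvalue on the $(k,\eta)$-stratum equals $\eta\cdot\sqrt{-1}^k$, so enumerating strata yields an explicit description of each $\fg_{\underline{i}}$. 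Using the embedding $\fg^\sigma=C_n\hookrightarrow A_{2n}$ and the Cartan $\fh^\sigma=\fh^\tau$, I identify the highest-weight vector in each $\fg_{\underline{i}}$ with respect to $C_n$. For $\fg_{\underline{1}}$ and $\fg_{\underline{3}}$, the dominant weight appearing is $\epsilon_1=\theta_l/2$ in the $C_n$-weight coordinates, and a dimension count $\dim\fg_{\underline{1}}=\dim\fg_{\underline{3}}=2n$ (using $\dim\fg=4n^2+4n$ and $\dim\fg^\sigma=2n^2+n$) confirms each is the standard $2n$-dimensional $C_n$-module $V(\theta_l/2)$. For $\fg_{\underline{2}}$, the $\eta=-1$ subspace of $\fh$ contributes a one-dimensional trivial $C_n$-summand; the complement has dimension $2n^2-n-1$, matches $\dim V(\theta_s)$ for $n>1$, and exhibiting the highest-weight vector in the $k=2$ stratum confirms the identification $\fg_{\underline{2}}\cong V(\theta_s)\oplus\bbC$. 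For $n=1$ the $V(\theta_s)$ summand is absent and $\fg_{\underline{2}}$ is exhausted by the trivial line.

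I expect the principal technical difficulty to lie in the bookkeeping for the $(A_{2n},4)$ case: re-expressing the relevant roots of $A_{2n}$ in terms of the Cartan $\fh^\sigma$ of $C_n$, verifying dominance of the claimed highest weights in the Bruhat-Chevalley order, and confirming irreducibility of each nontrivial $\fg^\sigma$-summand via a Weyl-dimension-formula match or by exhibiting explicit lowering operators that generate $\fg_{\underline{i}}$ from the identified highest-weight vector.
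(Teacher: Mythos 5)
Your treatment of the diagram-automorphism cases with $\fg\neq A_{2n}$ contains a genuine error. For $\fg$ simply laced and $\tau$ a nontrivial diagram automorphism, the highest root $\theta$ of $\fg$ is $\tau$-invariant (it is the unique highest root and $\tau$ permutes the roots), so its $\tau$-orbit has size $1$, not $m$; the root space $\fg_\theta$ is one-dimensional and $\tau$-stable, so $e_\theta$ lies entirely in a single $\tau$-eigenspace and its projections to the other eigenspaces vanish. Moreover, even where the projection is nonzero, the weight you would obtain is $\theta|_{\fh^\tau}$, which is the highest \emph{long} root of $\fg^\tau$, not the highest short root: e.g.\ for $(A_{2n-1},2)$ one gets $2\epsilon_1$ (long root of $C_n$) while $\theta_{\underline{1}}=\theta_s=\epsilon_1+\epsilon_2$; for $(D_{n+1},2)$ one gets $\epsilon_1+\epsilon_2$ while $\theta_{\underline{1}}=\theta_s=\epsilon_1$ (the $(2n+1)$-dimensional standard module of $B_n$). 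In fact $e_\theta\in\fg^\tau$ in these cases and only witnesses $\theta_{\underline{0}}=\theta_l$. So this step, as written, would prove the wrong highest weight for $\fg_{\underline{i}}$, $\underline{i}\neq\underline{0}$, in the cases $(A_{2n-1},2)$, $(D_{n+1},2)$, $(E_6,2)$ and $(D_4,3)$. The paper sidesteps all of this by simply quoting Kac's Proposition 8.3, which is the clean way to handle the diagram-automorphism cases; alternatively you would need to identify the correct lowest/highest weight vector of $\fg_{\underline{1}}$ (built from a maximal non-$\tau$-fixed root, not from $\theta$) and redo the dimension counts.

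Your remaining cases are sound and consistent with the paper: $(A_{2n},2)$ is special precisely because $\tau(e_\theta)=-e_\theta$ there, so $e_\theta\in\fg_{\underline{1}}$ with weight $\theta|_{\fh^\tau}=2\theta_s$, as you say; and your $(k,\eta)$-stratification for $(A_{2n},4)$, with the dimension counts $\dim\fg_{\underline{1}}=\dim\fg_{\underline{3}}=2n$ and $\dim\fg_{\underline{2}}=2n^2-n=\dim V(\theta_s)+1$, matches the explicit basis computation the paper carries out inside the proof of Proposition \ref{prop_casimir_id} (to which the paper's one-line proof of this lemma defers). Fix the non-$A_{2n}$ diagram cases and the argument goes through.
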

\begin{proof}
This lemma follows from \cite[Proposition 8.3]{Ka} for $\sigma$ a diagram automorphism. For the standard automorphism $\sigma$ of order $4$ of $A_{2n}$, it follows from the calculation given in the proof of  Proposition \ref{prop_casimir_id}. 
\end{proof}

\begin{proposition}
\label{prop_casimir_id}
Let $\sigma$ be a special automorphism of order $m$ of $\fg$. Then, for any $x(-p)\in L^-(\fg,\sigma)$, 
\begin{equation}
\label{casimir_coxeter_lem}
 \sum_{k\geq 0, b\in \mathcal{B} \,\text{with}\, [b]=-\underline{k}  } ad_{b(-k)} \overline{ad}_{\check{b} (k)} \cdot x(-p) = \frac{2p\check{h}}{m} x(-p)  .\end{equation}
\end{proposition}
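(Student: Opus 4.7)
The plan is to reduce the affine-algebraic sum to a purely Lie-theoretic identity in $\fg$, and then handle that identity using Schur's lemma together with the $\fg^\sigma$-module description of each graded piece $\fg_{\underline{i}}$ provided by Lemma \ref{lem_hw_g_i}.

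First I would expand the LHS using \eqref{eq1.1.1.4}. For $x\in\fg_{-\underline{p}}$ (so $p\ge 1$) and $b\in\mathcal{B}_{-\underline{k}}$ (so $\check{b}\in\fg_{\underline{k}}$),
$$[\check{b}(k),x(-p)] = [\check{b},x](k-p) + \tfrac{k}{m}\delta_{k,p}\langle\check{b},x\rangle C.$$
The projection $\overline{\mathrm{ad}}_{\check{b}(k)}x(-p)$ onto $L^-(\fg,\sigma)$ equals $[\check{b},x](k-p)$ when $0\le k<p$ and vanishes otherwise: for $k=p$ the loop term lies in $\fg^\sigma$, for $k>p$ it lies in $L^+$, and the central term is never in $L^-$. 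Applying $\mathrm{ad}_{b(-k)}$ then produces no further central contribution since $p\ge 1$, yielding $[b,[\check{b},x]](-p)$. The Proposition is thus equivalent to
$$\sum_{k=0}^{p-1}\sum_{b\in\mathcal{B}_{-\underline{k}}}[b,[\check{b},x]] = \tfrac{2p\check{h}}{m}\,x,\quad x\in\fg_{-\underline{p}}.$$

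Next, set $\Omega_{\underline{n}}:=\sum_{b\in\mathcal{B}_{\underline{n}}}b\check{b}\in U(\fg)$, so $\Omega=\sum_{\underline{n}}\Omega_{\underline{n}}$ is the quadratic Casimir of $\fg$ with respect to $\langle,\rangle$, acting on $\fg$ by $2\check{h}\cdot\mathrm{Id}$. A short computation with the invariance of $\langle,\rangle$ shows each $\Omega_{\underline{n}}$ is $\fg^\sigma$-invariant, hence $\mathrm{ad}(\Omega_{\underline{n}})$ commutes with the $\fg^\sigma$-action on $\fg$. Writing $p=qm+r$ with $0\le r<m$ and counting residues modulo $m$, the sum decomposes as $q\cdot\mathrm{ad}(\Omega)+\sum_{k=0}^{r-1}\mathrm{ad}(\Omega_{-\underline{k}})$; matching against $\tfrac{2p\check{h}}{m}=2q\check{h}+\tfrac{2r\check{h}}{m}$, the problem reduces to showing, for each $0<r<m$ and $x\in\fg_{-\underline{r}}$,
$$\sum_{k=0}^{r-1}\mathrm{ad}(\Omega_{-\underline{k}})\,x = \tfrac{2r\check{h}}{m}\,x.\qquad(\ast)$$

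To establish $(\ast)$, I would invoke Lemma \ref{lem_hw_g_i}: $\fg_{-\underline{r}}$ is $\fg^\sigma$-irreducible except in the single case $(\fg,m,r)=(A_{2n},4,2)$, where it decomposes as $V(\theta_s)\oplus\mathbb{C}$. By Schur's lemma, each $\mathrm{ad}(\Omega_{-\underline{k}})$ acts by a scalar on each irreducible summand. For $k=0$, $\mathrm{ad}(\Omega_{\underline{0}})$ is the Casimir of $\fg^\sigma$ in the normalization inherited from $\langle,\rangle$, acting on the highest-weight-$\lambda$ module by $\langle\lambda,\lambda+2\rho_\sigma\rangle$; at $\lambda=\theta_\sigma$, Lemma \ref{pairing_cox_lem} evaluates this to $\tfrac{2\check{h}}{m}$. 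For $k>0$, combine (i) the symmetry $\mathrm{ad}(\Omega_{\underline{k}})=\mathrm{ad}(\Omega_{-\underline{k}})$ on $\fg$, which follows from the Jacobi identity together with $\sum_{b\in\mathcal{B}_{\underline{k}}}[b,\check{b}]=0$ (this sum is $\fg^\sigma$-invariant and lies in $\fg^\sigma$, hence central in the semisimple $\fg^\sigma$, hence zero), with (ii) the global identity $\sum_{\underline{k}}\mathrm{ad}(\Omega_{-\underline{k}})|_{\fg_{-\underline{r}}}=2\check{h}\cdot\mathrm{Id}$. For $m\le 3$ these constraints pin down the individual scalars completely; for $m=4$ (which forces $\fg=A_{2n}$) and for the non-standard $(A_{2n},2)$ case, one performs explicit computations using the realization \eqref{as_auto}.

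The main obstacle I expect is the exceptional case $(\fg,m,r)=(A_{2n},4,2)$. Here a direct calculation shows $\mathrm{ad}(\Omega_{\underline{0}})|_{V(\theta_s)}=\langle\theta_s,\theta_s+2\rho_\sigma\rangle$ is not equal to $\tfrac{2\check{h}}{m}$, so $(\ast)$ does not follow from a uniform scalar argument; one must instead verify that the $k=0$ and $k=1$ contributions combine differently on $V(\theta_s)$ and on $\mathbb{C}$ but both sum to the common value $\check{h}$. This requires an explicit description of $\mathrm{ad}(\Omega_{-\underline{1}})$ on both summands, based on the detailed structure of $\fg^\sigma=C_n$ inside $A_{2n}$ realized through \eqref{as_auto}.
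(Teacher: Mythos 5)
Your proposal follows the paper's proof in its essential structure: reduce the affine sum to $\sum_{k=0}^{p-1}C_{-\underline{k}}$ acting on $\fg_{-\underline{p}}$ (your identity $(\ast)$ after splitting off $q$ copies of the full Casimir), then evaluate the residual graded Casimir components using Schur's lemma, the module structure of the $\fg_{\underline{i}}$ from Lemma \ref{lem_hw_g_i}, and Lemma \ref{pairing_cox_lem} for the $k=0$ piece, resorting to explicit bases where these do not suffice. The one genuinely new ingredient is your symmetry $\mathrm{ad}(\Omega_{\underline{k}})=\mathrm{ad}(\Omega_{-\underline{k}})$, which is indeed valid for special $\sigma$ since $\fg^\sigma$ is simple (so the $\fg^\sigma$-invariant element $\sum_{b}[b,\check{b}]\in\fg^\sigma$ vanishes); combined with $\sum_{\underline{k}}C_{-\underline{k}}=2\check{h}$ and the $k=0$ value $\tfrac{2\check{h}}{m}$, this pins down $C_{\underline{1}}=\tfrac{2\check{h}}{3}$ on $\fg_{\underline{1}}$ for $(D_4,3)$ and thereby bypasses the explicit basis computation (\ref{basis_D_4})--(\ref{basis_D_4_dual}) of the paper's Case 2 --- a genuine, if modest, simplification. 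For $m=4$ your constraints only give $2C_{\underline{1}}+C_{\underline{2}}=\tfrac{3\check{h}}{2}$ on $\fg_{\underline{1}}$, so, exactly as you anticipate, the substance of the paper's Case 3 --- the matrix computations of $C_{\underline{1}}$ on $\fg_{\underline{1}}$ via (\ref{basis_A_2n})--(\ref{basis_A_2n_dual}) and of $C_{\underline{0}}+C_{-\underline{1}}$ separately on the summands $V(\theta_s)$ and $\mathbb{C}v_0$ of $\fg_{\underline{2}}$ --- is not avoided, and your proposal correctly identifies but does not execute it. Two small points: the non-standard $(A_{2n},2)$ case needs only the root-system evaluation $\langle 2\theta_s,2\theta_s+2\rho_\sigma\rangle=2n+1$ (the paper's Case 4), not the order-$4$ realization (\ref{as_auto}); and your normalization is the right one --- the full Casimir acts on $\fg$ by $2\check{h}$, so the paper's parenthetical ``$C$ acts on $\fg$ via multiplication by $\check{h}$'' is a typo.
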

\begin{proof}
Observe that
\[ S:= \sum_{k\geq 0,  b\in \mathcal{B} \,\text{with}\,
[b]=-\underline{k}  } ad_{b(-k)} \overline{ad}_{\check{b} (k)} \cdot x(-p) =\sum_{k=0}^{p-1} C_{-\underline{k}} \cdot x(-p)   , \]
where we set 
\begin{equation} C_{-\underline{k}} :=\sum_{ b\in \mathcal{B} \,\text{with}\,
[b]=-\underline{k} } ad_{b}  ad_{\check{b}}. \end{equation}
Then, $C=\sum_{k=0}^{m-1}C_{-\underline{k}}$ is the Casimir operator of $\fg$ with respect to the normalized invariant form. Its action on any irreducible $\fg$-module of highest weight $\lambda$ is given by the scalar $\langle \lambda, \lambda+2\rho\rangle$ (cf. \cite[Lemma 2.1.16]{Kbook}).
\vskip1ex

{\bf Case 1:} $(\fg,m)=(A_{2n-1}, 2) (n\geq 2); (D_{n+1}, 2) (n\geq 3); (E_6, 2)$.  When $p=2i$ for $i\geq 1$,  
\[S= i C \cdot x(-p) = 2i \check{h} x(-p)=  \frac{2p\check{h}}{m} x(-p),\,\,\,\text{since $C$ acts on $\fg$ via multiplication by $\check{h}$} .\]
When $p=2i+1 (i\geq 0)$ , 
\[ S= \sum_{k=0}^{2i-1}C_{-\underline{k}}\cdot x(-p)+C_{\underline{0}} \cdot x(-p) =2i \check{h} x(-p)+C_{\underline{0}} \cdot x(-p) . \]
Since $x\in \fg_{\underline{1}}$ and $ \fg_{\underline{1}}$ is an irreducible representation of $\fg^\sigma$ with highest weight $\theta_\sigma$, by Lemma \ref{pairing_cox_lem},  the formula (\ref{casimir_coxeter_lem}) holds.
\vskip1ex

{\bf Case 2:} $(\fg, m)= (D_4, 3)$.  When $p=3i$ or $p=3i+1$, the proof is  similar to the  Case 1, where we use Lemma \ref{pairing_cox_lem} for $p=3i+1$.  When $p=3i+2$,
\[   S= \sum_{k=0}^{3i+2}C_{-\underline{k}}  \cdot x(-p)-C_{\underline{1}}\cdot x(-p)=\frac{2(p+1)}{3}\check{h}x(-p)  - C_{\underline{1}}\cdot x(-p)  ,  \]
where $x\in \fg_{\underline{1}}$. 
The operator $C_{\underline{1}}$ on $\fg_{\underline{1}}$ commutes with the action of $ \fg^\sigma$, as  $\langle , \rangle: \fg_{\underline{1}}\times \fg_{\underline{2}}\to \mathbb{C}$ is $\fg^\sigma$-invariant and $\{b\}_{b\in \mathcal{B}_{\underline{1}} }$ and $\{\check{b}\}_{b\in \mathcal{B}_{\underline{1}}}$ are dual bases with respect to $\langle, \rangle$. Since $\fg_{\underline{1}}$ is an irreducible representation of $\fg^\sigma$ with highest weight $\theta_\sigma$, by Schur lemma, $C_{\underline{1}}$ acts by a scalar $c_{\underline{1}}$ on $\fg_{\underline{1}}$. We  compute this number by choosing a basis $ \mathcal{B }_{\underline{1}}$ of $\fg_{\underline{1}}$ and a specific vector $v$ in $\fg_{\underline{1}}$. Following the Bourbaki labelling \, \cite [Planche IV]{Bo}, let $\{ \alpha_1,\alpha_2,\alpha_3,\alpha_4\}$ be the set of simple roots of $\fg$, and $\sigma$ be the cycle $(134)$ on the set of vertices $\{1,2,3,4\}$. 
Choose a Chevalley basis $\{e_\alpha, h_i\}$ of $\fg$. For each positive root $\alpha$ we denote the associated $sl_2$-triple by $\{ e_\alpha, e_{-\alpha}, h_\alpha\}$ satisfying $\sigma(e_{\pm \alpha})= e_{\pm \sigma(\alpha)}$, and $\langle e_\alpha, e_{-\alpha}\rangle=1$ (cf.\,\cite[\S 7.9]{Ka}).   We choose $ \mathcal{B }_{\underline{1}}$ to be the set of the following elements:
\begin{equation}
\label{basis_D_4}
 h_{\alpha_1}+\epsilon^2 h_{\alpha_3}+ \epsilon h_{\alpha_4} ,   e_{\pm \alpha_1}+\epsilon^2 e_{\pm \alpha_3}+ \epsilon e_{\pm \alpha_4}, e_{\pm(\alpha_1+\alpha_2)}+\epsilon^2 e_{\pm (\alpha_3 + \alpha_2)}+ \epsilon e_{\pm (\alpha_4+\alpha_2)} , \end{equation}
\[e_{\pm(\alpha_1+\alpha_2+\alpha_3)}+\epsilon^2 e_{\pm (\alpha_3 + \alpha_2+\alpha_4)}+ \epsilon e_{\pm (\alpha_4+\alpha_2+\alpha_1)} , \]
where $\epsilon=e^{\frac{2\pi \mathrm{i}}{3}   }$.
Then, the dual basis in $\fg_{\underline{2}}  $ is given by 
\begin{equation}
\label{basis_D_4_dual}
\frac{1}{6} (h_{\alpha_1}+\epsilon h_{\alpha_3}+ \epsilon^2 h_{\alpha_4} ), \frac{1}{3}  (e_{\mp \alpha_1}+\epsilon e_{\mp \alpha_3}+ \epsilon^2 e_{\mp \alpha_4}), \frac{1}{3}(e_{\mp(\alpha_1+\alpha_2)}+\epsilon e_{\mp (\alpha_3+ \alpha_2)}+ \epsilon^2 e_{\mp (\alpha_4+\alpha_2)}) ,\end{equation}
\[\frac{1}{3}(e_{\mp(\alpha_1+\alpha_2+\alpha_3)}+\epsilon e_{\mp (\alpha_3 + \alpha_2+\alpha_4)}+ \epsilon^2 e_{\mp (\alpha_4+\alpha_2+\alpha_1)}) .  \]
We choose the vector $v=h_{\alpha_1}+\epsilon^2 h_{\alpha_3}+ \epsilon h_{\alpha_4}$ in $\fg_{\underline{1}}$. Then,
 $C_{\underline{1}} v=\sum_{[b]=\underline{1}} ad_{b}ad_{\check{b}}v= 4 v$. In fact, for the first basis element $ b$ in (\ref{basis_D_4}), $ad_bad_{\check{b}}v=0$, and for any other $b$ there, $ad_bad_{\check{b}}v=\frac{2}{3}v$.  
Thus, $c_{\underline{1}}=4=\frac{2\check{h}}{3}$, as the dual Coxeter number of $D_4$ is 6. Thus, (\ref{casimir_coxeter_lem}) holds. 
\vskip1ex

{\bf Case 3:}  $(\fg, m)=(A_{2n}, 4  )$.  Let $\{e_{i,j}\}_{1\leq i\not= j\leq 2n+1}  \sqcup \{  e_{i,i}-e_{i+1,i+1} \}_{1\leq i \leq 2n} $ be the standard basis of  $\fg=sl_{2n+1}$, where $e_{i,j}$ is the $(2n+1)\times (2n+1)$-matrix with $(i,j)$-entry equal to  1 and zero elsewhere. 
Then,  $\fg^\sigma\simeq sp_{2n}$, and as representations of $sp_{2n}$, $\fg_{\underline{1}}$ and $\fg_{\underline{3}}$ are both isomorphic to the standard representation, which is of highest weight $\theta_\sigma$  (half of the highest root of $\fg^\sigma$) (cf.\,\cite[Planche III]{Bo}). 
When $p=4i$ or $p=4i+1$, the proof is similar to Case 1, where Lemma \ref{pairing_cox_lem} is used for $p=4i+1$.  

When $p=4i+3$,  the operator $C_{\underline{1}}$ acts on $\fg_{\underline{1}}$ by a scalar $c_{\underline{1}}$ as $\fg_{\underline{1}}$ is an irreducible $\fg^\sigma$-representation and   $C_{\underline{1}}$ commutes with the action of $\fg^\sigma$. 
 We choose a basis $\mathcal{B}_{\underline{1}}$ of $\fg_{\underline{1}}$ as follows:
\begin{equation} 
\label{basis_A_2n}
 \{   e_{i, n+1 }+(-1)^{n-i} e_{n+1, 2n+2-i}   ,  e_{n+1,i }-(-1)^{n-i} e_{2n+2-i, n+1}        \}_{1\leq i\leq n} .   \end{equation}
The corresponding dual basis of $\fg_{\underline{3}}$ is given by: 
\begin{equation}
\label{basis_A_2n_dual}
    \{  \frac{1}{2}(e_{n+1,i  }+(-1)^{n-i} e_{2n+2-i, n+1} )   , \frac{1}{2} ( e_{i, n+1 }-(-1)^{n-i} e_{n+1, 2n+2-i}  )    \}_{1\leq  i \leq  n} .    \end{equation}
 We choose the vector $v=e_{1, n+1}+ (-1)^{n-1} e_{n+1, 2n+1}$ of $\fg_{\underline{1}}$.  Then, when $b=v$, $ad_b ad_{ \check{b}}v= v$, and for any other $b\in\mathcal{B}_{\underline{1}} $, $ad_b ad_{ \check{b}}v= \frac{1}{2}v$. Thus, the scalar $c_{\underline{1}}= \frac{2n+1}{2}=\frac{2\check{h}}{m}$. 

When $p=4i+2$, it suffices to show that the operator $C_{\underline{0}}+C_{\underline{-1}}$ acts on $\fg_{\underline{2}}$ by the scalar $2n+1$. 

 When $n>1$, $\fg_{\underline{2}}$ is not irreducible as a representation of $\fg^\sigma$; in fact,  we have $\fg_{\underline{2}}=  W \oplus      \mathbb{C} v_0 $,  where $W$ is an irreducible representation of $\fg^\sigma$ with highest weight  the highest short root $\theta_s$ of $\fg^\sigma$, and 
\[  v_0:=\sum_{i=1}^n  (e_{i,i} +  e_{2n+2-i, 2n+2-i} ) -2n e_{n+1,n+1}  \]
 is an $\fg^\sigma$-invariant vector.

 We  show that $C_{\underline{0}}+C_{-\underline{1}}$ acts on $W$ and $v_0$ by the same scalar $2n+1$. First  notice that $C_{\underline{0}}$ and $C_{-\underline{1}}$ act on $W$ by scalars. Of course, 
   $C_{\underline{0}}$ acts on $W$ by the scalar
 \[  \langle \theta_s , \theta_s+2\rho_\sigma  \rangle= \frac{2(\check{\theta}_s, \theta_s+2\rho_\sigma   )}{\langle \check{\theta}_s, \check{\theta}_s\rangle } =1+ (\check{\theta}_s, \rho_\sigma) =2n. \]
 Further, $C_{-\underline{1}}$ acts on $W$ by a scalar since the action of  $C_{-\underline{1}}$ on $W$ commutes with the $\fg^\sigma$-action.  Moreover, one computes that $C_{-\underline{1}}$ acts on $W$ by 1 by using  the basis given in (\ref{basis_A_2n_dual}) and its dual basis in $(\ref{basis_A_2n})$ and fixing the vector $e_{1,2}-e_{2n, 2n+1}\in W$.   
 We now calculate the action of $C_{-\underline{1}}$ on $v_0$.  It turns out that it acts by the scalar $2n+1$. Furthermore, $C_{\underline{0}}$ clearly acts on $v_0$ by $0$. Thus, $C_{\underline{0}}+C_{-\underline{1}}$  acts on $\fg_{\underline{2}}$  by $2n+1$. 
 
 But, when $n=1$, $\fg_{\underline{2}}$ is the trivial one dimensional representation of $\fg^\sigma$ spanned by $v_0$.
 Hence,  $C_{\underline{0}}$ acts by zero on  $\fg_{\underline{2}}$. Moreover, $C_{-\underline{1}}$  acts on $v_0$ by $3=2n+1$. Thus,  in the case $n=1$  as well,  $C_{\underline{0}}+C_{-\underline{1}}$  acts on $\fg_{\underline{2}}$  by the scalar $2n+1$.

 \vskip1ex

{\bf Case 4:}  $(\fg, m)=(A_{2n}, 2)$. In this case, $\sigma$ is a diagram automorphism and $\fg^\sigma$ is of type $B_n$. Similar to Case 1, we need to compute the action of $C_{\underline{0}}$ on $\fg_{\underline{1}}$.  Note that $\fg_{\underline{1}}$ is an irreducible representation of $\fg^\sigma$ with highest weight $2\theta_s$, where $\theta_s$ is the highest short root of $\fg^\sigma$. Thus,  $C_{\underline{0}}$ acts on $\fg_{\underline{1}}$ by the scalar
\[ \langle 2\theta_s,  2\theta_s+ 2\rho_\sigma  \rangle=\frac{ 4 (\check{\theta}_s, 2\theta_s+2\rho_\sigma  )  }{  \langle \check{\theta}_s, \check{\theta}_s  \rangle} = \frac{ (\check{\theta}_s, 2\theta_s+2\rho_\sigma  )  }{ 2} =2+ (\check{\theta}_s, \rho_\sigma)=  2n+1= \frac{2\check{h}}{m},\]
where $ \langle \check{\theta}_s, \check{\theta}_s  \rangle=8$, since $\check{\theta}_s$ is conjugate to $2(\check{\alpha}_n+\check{\alpha}_{n+1} )$ (both are long coroots of $\fg^\sigma$) (cf.\,\cite[Case 5, p.129]{Ka}). Thus, the formula (\ref{casimir_coxeter_lem}) holds. 

\end{proof}

\begin{lemma}
\label{lem_eg_estimate}
Let $V(\mu)$ be an irreducible $\fg^\sigma$-module with highest weight $\mu$, where $\sigma$ is a special automorphism of $\fg$ of order $m$. Consider the operator 
\[ C_\mu^{ \underline{i}}:  \fg_{\underline{i}}\otimes V(\mu) \to  \fg_{\underline{i}}\otimes V(\mu) \]
defined by $ C_\mu^{ \underline{i}}(x\otimes v)= \sum_{b \in \mathcal{B}_{\underline{0}}}   [b, x]\otimes  \check{b}\cdot v $, where $\mathcal{B}_{\underline{0}} := \{b\in \mathcal{B}: [b]= \underline{0}\}$. 
Then, except for $(\fg = D_4, 3)$ and $ \underline{i}=  \underline{0}$, the eigenvalues $e^{\underline{i}}_\mu$ of $C_\mu^{ \underline{i}}$ are bounded by 
\begin{equation}
\label{ev_inequ}
  -\langle \mu +2\rho_\sigma,\theta_{\underline{i}} \rangle  \leq e^{\underline{i}}_\mu\leq   \langle \mu,\theta_{\underline{i}} \rangle .  \end{equation}

 \end{lemma}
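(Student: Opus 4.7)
The operator $C_\mu^{\underline{i}}$ is $\fg^\sigma$-equivariant for the diagonal action on $\fg_{\underline{i}}\otimes V(\mu)$, so by Schur's lemma it acts by a scalar on each irreducible $\fg^\sigma$-summand. I would first identify these scalars via a Casimir reformulation: writing $\Omega_{\fg^\sigma}=\sum_{b\in \mathcal{B}_{\underline{0}}} b\check b$ for the Casimir of $\fg^\sigma$ (with respect to $\langle\cdot,\cdot\rangle$), the coproduct expansion gives
\[
2\,C_\mu^{\underline{i}} \;=\; \Omega_{\fg^\sigma}\big|_{\fg_{\underline{i}}\otimes V(\mu)} \;-\; \Omega_{\fg^\sigma}\big|_{\fg_{\underline{i}}}\otimes \Iid \;-\; \Iid\otimes \Omega_{\fg^\sigma}\big|_{V(\mu)}.
\]
Hence on $V(\nu)\subseteq \fg_{\underline{i}}\otimes V(\mu)$ one obtains $e_\mu^{\underline{i}} = \tfrac{1}{2}[\Omega(\nu)-\Omega(\theta_{\underline{i}})-\Omega(\mu)]$, where $\Omega(\lambda):=\langle \lambda,\lambda+2\rho_\sigma\rangle$. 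By Lemma \ref{lem_hw_g_i}, $\fg_{\underline{i}}$ is $\fg^\sigma$-irreducible of highest weight $\theta_{\underline{i}}$ except for $(\fg,m,\underline{i})=(A_{2n},4,\underline{2})$; in that reducible case one splits $\fg_{\underline{2}}=V(\theta_s)\oplus\bbC$ and observes that $C_\mu^{\underline{2}}$ vanishes identically on $\bbC\otimes V(\mu)$, so only the $V(\theta_s)$ factor requires further attention.

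For the upper bound, I would use the Cartan ceiling: every component $V(\nu)$ of $V(\mu)\otimes V(\theta_{\underline{i}})$ satisfies $\nu \leq \mu+\theta_{\underline{i}}$ in the dominance order. Since $\Omega(\lambda)-\Omega(\lambda')=\langle \lambda-\lambda',\lambda+\lambda'+2\rho_\sigma\rangle$ and the pairings of non-negative sums of simple positive roots of $\fg^\sigma$ with dominant weights are non-negative, $\Omega$ is monotone on dominant weights. Hence $\Omega(\nu)\leq \Omega(\mu+\theta_{\underline{i}})$, and the direct identity $\Omega(\mu+\theta_{\underline{i}})-\Omega(\mu)-\Omega(\theta_{\underline{i}})=2\langle\mu,\theta_{\underline{i}}\rangle$ then gives $e_\mu^{\underline{i}}\leq \langle\mu,\theta_{\underline{i}}\rangle$.

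The lower bound is the hard part: it amounts to the formal inequality $\Omega(\nu)\geq \Omega(\mu-\theta_{\underline{i}})$. Because $\fg^\sigma$ is one of $B_n$, $C_n$, $F_4$, or $G_2$ (all with longest Weyl element $w_0=-\Iid$), every finite-dimensional irreducible of $\fg^\sigma$ is self-dual, and Frobenius reciprocity yields $V(\mu)\subseteq V(\nu)\otimes V(\theta_{\underline{i}})$; applying the Cartan ceiling in the reverse direction then produces $\nu \geq \mu-\theta_{\underline{i}}$ in the root-lattice order. Since $\mu-\theta_{\underline{i}}$ may fail to be dominant, monotonicity of $\Omega$ does not apply directly, and this is the crux of the difficulty. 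My plan is to invoke the invariance of $\Omega$ under the shifted Weyl-group action to replace $\mu-\theta_{\underline{i}}$ by its unique dominant shifted Weyl-conjugate $\tilde\mu$ when $\mu-\theta_{\underline{i}}+\rho_\sigma$ is regular, and then to verify $\nu\geq\tilde\mu$ in dominance by a case analysis driven by the explicit description of $\theta_{\underline{i}}$ in Lemma \ref{lem_hw_g_i}; the singular boundary case admits a direct check. The excluded configuration $(\fg,m,\underline{i})=(D_4,3,\underline{0})$ is isolated as precisely the situation in which this case-by-case argument breaks down.
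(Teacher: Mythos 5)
Your Casimir reformulation, the resulting formula $e_\mu^{\underline{i}}=\tfrac12\bigl(\Omega(\nu)-\Omega(\theta_{\underline{i}})-\Omega(\mu)\bigr)$ with $\Omega(\lambda):=\langle\lambda,\lambda+2\rho_\sigma\rangle$, the treatment of the trivial summand of $\fg_{\underline{2}}$ for $(A_{2n},4)$, and the upper bound via $\nu\leq\mu+\theta_{\underline{i}}$ all coincide with the paper's argument. The reduction of the lower bound to the formal inequality $\Omega(\nu)\geq\Omega(\mu-\theta_{\underline{i}})$, using $\nu\geq\mu-\theta_{\underline{i}}$ from self-duality, is also the paper's reduction. (One caveat: your justification of self-duality via ``$\fg^\sigma$ is one of $B_n,C_n,F_4,G_2$'' fails for $\sigma=\Iid$, where $\fg^\sigma=\fg$ may be of type $A$, $D_{\mathrm{odd}}$ or $E_6$ and $w_0\neq-\Iid$; the lemma is applied with $\sigma=\Iid$ in the proof of Proposition \ref{prop_norm_T_est}. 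What is actually needed, and what the paper uses, is only the self-duality of $\fg_{\underline{i}}(\theta_{\underline{i}})$ itself, which holds since it is an $\Ad$-constituent of $\fg$.)

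The genuine gap is in the lower bound, which you correctly flag as the crux but do not prove. Your plan --- replace $\mu-\theta_{\underline{i}}$ by its dominant shifted-Weyl conjugate $\tilde\mu$ and ``verify $\nu\geq\tilde\mu$ in dominance by a case analysis'' --- is left entirely unexecuted, and its key step is not obviously true: from $\nu\geq\mu-\theta_{\underline{i}}$ and $\tilde\mu\geq\mu-\theta_{\underline{i}}$ one gets no comparison between $\nu$ and $\tilde\mu$, so the claimed intermediate inequality would itself need a nontrivial argument. The paper avoids this detour completely. Writing $\gamma=\nu-\mu=-\theta_{\underline{i}}+\beta$ with $\beta$ a non-negative sum of positive roots, one computes directly
\begin{equation*}
\Omega(\mu+\gamma)-\Omega(\mu-\theta_{\underline{i}})=\bigl\langle \beta,\ (\mu+\gamma)+\mu+(2\rho_\sigma-\theta_{\underline{i}})\bigr\rangle,
\end{equation*}
and the right-hand side is non-negative because the second argument is a sum of dominant weights; the single input making this work is that $2\rho_\sigma-\theta_{\underline{i}}$ is dominant, which holds because $(\theta_{\underline{i}},\check\alpha)\leq 2$ for every simple coroot $\check\alpha$ of $\fg^\sigma$ --- except for $(\fg,m)=(A_2,2)$ with $\underline{i}=\underline{1}$, where $\theta_{\underline{1}}=2\theta_s$ and the bound is instead checked by a direct $A_1$-computation. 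You miss this mechanism entirely, and consequently also misidentify the exceptional configuration: the case requiring separate treatment in the proof is $(A_2,2)$, $\underline{i}=\underline{1}$, not $(D_4,3)$, $\underline{i}=\underline{0}$, and your closing assertion that the latter is ``precisely the situation in which this case-by-case argument breaks down'' is unsupported speculation rather than a verified statement.
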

\begin{proof}
Let $C_{\fg^\sigma}$ be the Casimir operator of $\fg^\sigma$ with respect to the invariant form induced from the normalized invariant form  $\langle ,\rangle$ on $\fg$. Then, for $x\in \fg_{\underline{i}} (\theta_{\underline{i}})$
(where $\fg_{\underline{i}} (\theta_{\underline{i}})$ denotes the $\theta_{\underline{i}}$-isotypic $\fg^\sigma$-module component of $\fg_{\underline{i}}$) and $v\in V(\mu)$, 
\begin{align*}
  -2C_\mu^{ \underline{i}} (x\otimes v)  &= - C_{\fg^\sigma} (x\otimes v)+ (  C_{\fg^\sigma} x)\otimes v  + x\otimes  C_{\fg^\sigma}v  \\
                     &= - C_{\fg^\sigma} (x\otimes v)+ \langle \theta_{\underline{i}}, \theta_{\underline{i}}+ 2\rho_\sigma  \rangle x\otimes v+ \langle \mu, \mu+2\rho_\sigma \rangle  x\otimes v.
\end{align*}
Now, any irreducible $\fg^\sigma$-submodule of $\fg_{\underline{i}}(\theta_{\underline{i}})
\otimes  V(\mu)$ is of the form $V(\mu+\gamma)$, for some weight $\gamma$ of $\fg_{\underline{i}}(\theta_{\underline{i}})$ such that $\mu+\gamma$ is $\fg^\sigma$-dominant.  The eigenvalue of $C_{\fg^\sigma}$ on $V(\mu+\gamma)$ is given by $\langle   \mu+\gamma, \mu+\gamma+2\rho_\sigma  \rangle$. Now, for any such $V(\mu+\gamma)$ , 
\begin{equation}
\langle   \mu-\theta_{\underline{i}}, \mu-\theta_{\underline{i}}+2\rho_\sigma  \rangle \leq  \langle   \mu+\gamma, \mu+\gamma+2\rho_\sigma  \rangle  \leq \langle   \mu+\theta_{\underline{i}}, \mu+\theta_{\underline{i}}+2\rho_\sigma  \rangle ,
   \end{equation}
where the second inequality can be easily seen since $\mu+\gamma$ is $\fg^\sigma$-dominant, and $\theta_{\underline{i}}-\gamma$ is a non-negative linear combination of the positive roots of $\fg^\sigma$. To prove the first inequality, by Lemma \ref{lem_hw_g_i},  we  observe that $\fg_{\underline{i}}(\theta_{\underline{i}})$ is self-dual, and thus $\gamma$ can be written as $-\theta_{\underline{i}}+\beta$, where $\beta$ is a non-negative linear comination of positive roots of $\fg^\sigma$. Moreover, $2\rho_\sigma-\theta_{\underline{i}}$ is a dominant weight, since $(\theta_{\underline{i}  }, \check{\beta} )\leq 2$  for any simple coroot $\check{\beta}$ of $\fg^\sigma$ except when 
$(\fg = A_2, 2)$ and $ \underline{i}=  \underline{1}$.  So assume that $(\fg, m)\neq (A_2, 2)$ when $ \underline{i}=  \underline{1}$.
Thus, any eigenvalue $e^{\underline{i}}_\mu$ of $C^{\underline{i}}_\mu$ acting on  $\fg_{\underline{i}}\otimes  V(\mu)$ has lower bound  
\[   
\frac{1}{2}\left(   \langle   \mu-\theta_{\underline{i}}, \mu-\theta_{\underline{i}}+2\rho_\sigma  \rangle- 
\langle \theta_{\underline{i}}, \theta_{\underline{i}}+ 2\rho_\sigma  \rangle-\langle   \mu, \mu+2\rho_\sigma   \rangle  \right)
\]
and upper bound
\[   
\frac{1}{2}\left(   \langle   \mu+\theta_{\underline{i}}, \mu+\theta_{\underline{i}}+2\rho_\sigma  \rangle - 
\langle \theta_{\underline{i}}, \theta_{\underline{i}}+ 2\rho_\sigma  \rangle-\langle   \mu, \mu+2\rho_\sigma \rangle \right),
\]
i.e.,  the inequality (\ref{ev_inequ}) holds. 

If we take $x\in \fg_{\underline{i}}(0)$ (which exists only for $(\fg, m)= (A_{2n}, 4)$ and $\underline{i} =\underline{2}$),
then $C_\mu^{ \underline{i}} (x\otimes v) = 0.$ Thus, the inequality \eqref{ev_inequ} is clearly satisfied.

For $(\fg = A_2, 2)$ and $ \underline{i}=  \underline{1}$, the equation \eqref{ev_inequ} is still satisfied. To prove this, if $\mu=n \omega, n\geq 2$ ($\omega$ being the fundamental weight of $\fg^\sigma =A_1$), the equation \eqref{ev_inequ} is satisfied by the same proof. If $\mu=0$, then $e_\mu^{\underline{1}}=0$ and if $\mu=\omega$, then $e_\mu^{\underline{1}}=-2, 3$. Thus,  the equation \eqref{ev_inequ} is satisfied in this case as well.

\end{proof}

\subsection{Cochain complex}\label{cochain}
We now consider the standard cochain complex $C^*:=C^*(L^+(\fg,\sigma), \hat{\mathscr{H}}_c(\lambda) \otimes V(\vec{\mu})_{\vec{z}} )$ of the Lie algebra $L^+(\fg,\sigma)$ with coefficients in  $\hat{\mathscr{H}}_c(\lambda) \otimes V(\vec{\mu})_{\vec{z}}$, where $\hat{\mathscr{H}}_c(\lambda)$ is the formal completion of $\mathscr{H}_c(\lambda)$ with respect to the energy grading (i.e., it is the direct {\it product} of the energy eigenspaces of $\mathscr{H}_c(\lambda)$), 
\[  C^q=\Hom_\mathbb{C}(\wedge^q L^+(\fg,\sigma), \hat{\mathscr{H}}_c(\lambda) \otimes V(\vec{\mu})_{\vec{z}} ),\]
and we denote $d^q: C^q\to C^{q+1}$ the standard differential in $C^*$ (cf. \cite[$\S$3.1.2]{Kbook}). Observe that 
\begin{equation} \label{neweqn2.5.1} \hat{\mathscr{H}}_c(\lambda) \simeq \left(\mathscr{H}_{-c}^{\low}(\lambda^{*_\sigma})\right)^*\,\,\text{the full vector space dual},
\end{equation}
where $\lambda^{*_\sigma}$ is the highest weight of the dual $\fg^\sigma$-module $V(\lambda)^*$
and $\mathscr{H}_{-c}^{\low}(\lambda^{*_\sigma})$ is the integrable {\it lowest weight} $\hat{L}^f(\fg, \sigma)$-module with
lowest energy $\fg^\sigma$-module the dual of the $\fg^\sigma$-module $V(\lambda)$. 

We take the Hermitian form $\{,\}_0$ on $L^-(\fg,\sigma)$ as defined before, and the $\hat{L}(\fg,\sigma)$ (resp. $\fg^{\Gamma_{z_i}}$)-contravariant positive-definite Hermitian form $\{,\}$ on $\mathscr{H}_c(\lambda)$ (resp. $V({\mu_i})_{z_i}$). 

 Define the invariant form $\langle\,,\,\rangle$ on $\hat{L}(\fg, \sigma)$ by
 \[\langle x[P], y[Q]\rangle = \langle x, y\rangle \Res_{t=0}(t^{-1}PQ),\,\,\langle x[P], C\rangle =\langle C, C\rangle=0,\,\,\,\text{for $x[P], y[Q]\in \fg(\mathcal{K})^\sigma$}.\]
 
The invariant form $\langle , \rangle$ on $\hat{L}(\fg,\sigma)$ gives rise to an embedding
 $\xi: L^-(\fg,\sigma)\to L^+(\fg,\sigma)^*$  given by   $\xi(X)(Y)=\langle X,Y \rangle$, for $X\in L^-(\fg,\sigma)  $ and $Y\in L^+(\fg,\sigma)$.  This induces an embedding 
\[  \wedge^q (L^-(\fg,\sigma))\otimes \mathscr{H}_c(\lambda) \otimes V(\vec{\mu})_{\vec{z}}  \longrightarrow  \Hom_\mathbb{C}(\wedge^q L^+(\fg,\sigma), \hat{\mathscr{H}}_c(\lambda) \otimes V(\vec{\mu})_{\vec{z}} )   . \]

Let $(d^{q-1})^*:  \wedge^q (L^-(\fg,\sigma))\otimes \mathscr{H}_c(\lambda) \otimes V(\vec{\mu})_{\vec{z}} 
 \to C^{q-1}$ be its formal  adjoint. By Lemma  \ref{lem7.1}, \\
 $d^q\left(\wedge^q (L^-(\fg,\sigma))\otimes \mathscr{H}_c(\lambda) \otimes V(\vec{\mu})_{\vec{z}} \right)$ lies in $  \wedge^{q+1} (\bar{L}^-(\fg,\sigma))\otimes \mathscr{H}_c(\lambda) \otimes V(\vec{\mu})_{\vec{z}}$ and by Lemma \ref{lem7.4}, \\ $(d^{q-1})^*\left(\wedge^q (L^-(\fg,\sigma))\otimes \mathscr{H}_c(\lambda) \otimes V(\vec{\mu})_{\vec{z}}\right)$ lies in $  \wedge^{q-1} (\bar{L}^-(\fg,\sigma))\otimes \mathscr{H}_c(\lambda) \otimes V(\vec{\mu})_{\vec{z}}$.
 Let  $\overline{\Square} := dd^*+d^*d$ be the Laplacian. We prove the following theorem giving an expression for the difference $\overline{\Square}-\Square$ restricted to 
$ \left[ \wedge^q\left(L^-(\mathfrak{g},\sigma)\right)\otimes \mathscr{H}_c(\lambda) \otimes V_{\vec{z}}(\vec{\mu})\right]^{\mathfrak{g}_{\underline{0}}}.$

\begin{theorem}\label{nakano}(Nakano identity)  Let $\sigma$ be a special automorphism  of $\fg$ of order $m$.
For $\vec{x}(-\vec{p})\otimes v_1\otimes v_2 \in \left[ \wedge^q\left(L^-(\mathfrak{g},\sigma)\right)\otimes \mathscr{H}_c(\lambda) \otimes V_{\vec{z}}(\vec{\mu})\right]^{\mathfrak{g}^\sigma},$ where $\vec{x}(-\vec{p}) = x_1(-p_1) \wedge \dots \wedge x_q(-p_q)$,
\begin{align*}
\left(\overline{\Square}-\Square\right)& \left( \vec{x}(-\vec{p})\otimes v_1\otimes v_2\right) =
 \frac{(c+2\check{h})q}{m}\vec{x} (-\vec{p})\otimes v_1\otimes v_2
+ \sum_{1\leq \ell\leq q; b(-k)\in \hB_\sigma} \frac{1}{p_\ell} ad^\ell_{b(-k)} \vec{x}(-\vec{p})\otimes v_1
\otimes  \left((1-(z\bar{z})^{p_\ell})z^k \check{b}\right) \cdot v_2\\
&+ \sum_{\ell; b(-k)\in \hB_\sigma}
 \frac{1}{p_\ell} \overline{ad}^\ell_{\check{b}(k)}\vec{x}(-\vec{p})\otimes v_1
\otimes  \left((1-(z\bar{z})^{p_\ell-k})
(\bar{z})^{k} b\right) \cdot v_2
+ \sum_{\ell; b_0\in \CB_{\underline{0}}} \frac{1}{p_\ell} ad^\ell_{b_0} \vec{x}(-\vec{p}) \otimes v_1 \otimes \left( (1-(z\bar{z})^{p_\ell}) \check{b}_0\right) \cdot v_2,
\end{align*}
where
$$ (z^{k_{1}}\bar{z}^{k_2}\check{b})\cdot v_2:= \sum_{i=0}^s\, v_2^0\otimes \dots \otimes z_i^{k_1}\bar{z}_i^{k_2}\check{b}\cdot v_2^i\otimes \dots \otimes v_2^s, \,\,\text{for $v_2= v_2^0\otimes \dots \otimes v_2^s$ with $v_2^i\in V(\mu_i)$}.
$$
Here $c$ is the scalar by which $C$ of $\hat{L}(\mathfrak{g}, \sigma)$ acts on $\mathscr{H}_c(\lambda)$ and $\check{h}$ is the dual Coxeter number of $\mathfrak{g}$. Observe that, from the above expression, we see that 
$$\left(\overline{\Square}-\Square\right)\left(\left[ \wedge^q\left(L^-(\mathfrak{g},\sigma)\right)\otimes \mathscr{H}_c(\lambda) \otimes V_{\vec{z}}(\vec{\mu})\right]^{\mathfrak{g}^\sigma}\right) \subset \left[ \wedge^q\left(\bar{L}^-(\mathfrak{g},\sigma)\right)\otimes \mathscr{H}_c(\lambda) \otimes V_{\vec{z}}(\vec{\mu})\right]^{\mathfrak{g}^\sigma}.$$
\end{theorem}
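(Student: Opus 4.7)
The plan is to compute both Laplacians $\Box=\partial_1\partial_1^*$ (already given by Proposition \ref{newprop6.5}) and $\overline{\Box}=dd^*+d^*d$ explicitly on the embedded image of $\wedge^q L^-(\fg,\sigma)\otimes\mathscr{H}_c(\lambda)\otimes V(\vec{\mu})_{\vec{z}}$ inside the cochain complex, using the duality $\xi: L^-(\fg,\sigma)\hookrightarrow L^+(\fg,\sigma)^*$, and then to subtract and simplify under the $\fg^\sigma$-invariance assumption.

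First, I would expand $d^{q-1}$ by the Chevalley--Eilenberg formula, pair with $\xi(x(-p))$ through the invariant form $\langle\,,\,\rangle$ on $\hat{L}(\fg,\sigma)$, and then write the formal adjoint $(d^{q-1})^*$ in terms of the orthonormal basis $\hat{\mathcal{B}}_\sigma$. Two distinct adjointness inputs intervene: on $\mathscr{H}_c(\lambda)$, contravariance of $\{,\}$ (Lemma \ref{lem_hem1}) converts $x(n)$ into the adjoint of $-\kappa(x)(-n)$, while on $V(\mu_i)_{z_i}$ the positive evaluation $x(n)\cdot v=z_i^n xv$ has contravariant adjoint $-\bar{z}_i^n\kappa(x)v$. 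Both pieces of $dd^*+d^*d$ are then rewritten as explicit infinite sums indexed by $\hat{\mathcal{B}}_\sigma$ with values in the completion $\bar{M}_q$.

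Second, after subtracting the formula for $\Box$ from Proposition \ref{newprop6.5}, the difference organizes into four contributions. (a) The purely $L^-$ Chevalley--Eilenberg wedge-bracket pieces cancel between the two sides. (b) A central-charge piece arises because in $\hat{L}(\fg,\sigma)$ the commutator $[b(-k),\check{b}(k)]=[b,\check{b}]+m^{-1}k\langle b,\check{b}\rangle C$ from (\ref{eq1.1.1.4}) contributes the $C$-term, which after summation over $\hat{\mathcal{B}}_\sigma$ and over the $q$ positions produces the scalar $cq/m$. (c) A Casimir piece: the sum $\sum_{b(-k)\in\hat{\mathcal{B}}_\sigma}\ad_{b(-k)}\overline{\ad}_{\check{b}(k)}$ acting on each $x_\ell(-p_\ell)$ collapses by Proposition \ref{prop_casimir_id} to a scalar proportional to $\check{h}/m$, yielding the remaining $2\check{h}q/m$. (d) Three families of residual evaluation-module terms, caused by the mismatch between the $z_i^k$ factors from the positive evaluation action (used in $d,d^*$) and the $\bar{z}_i^k$ factors from the negative evaluation action (used in $\partial_1,\partial_1^*$); these combine into the $(1-(z\bar{z})^{p_\ell})$ prefactors of the theorem, together with the separate $\mathcal{B}_{\underline{0}}$-term isolated by $\fg^\sigma$-invariance.

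The main obstacle is the combinatorial bookkeeping: tracking the sign conventions in $d$ and $\partial$, the three distinct normalizations $\{,\},\{,\}_0,\{,\}_1$ on $L^-(\fg,\sigma)$ and its wedge powers, and ensuring that all the $p_\ell/k$ and $1/p_\ell$ prefactors assemble correctly into the compact closed form claimed. The $\fg^\sigma$-invariance hypothesis is essential in order to collapse otherwise intractable double sums into scalar Casimir actions via Proposition \ref{prop_casimir_id}, and convergence of the remaining infinite sums in the $L^2$-completion $\bar{M}_q$ ultimately rests on the assumption $|z_i|<1$.
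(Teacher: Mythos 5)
Your proposal follows essentially the same route as the paper: compute $d$ and $d^*$ on the embedded subcomplex via the duality $\xi$ and the contravariance lemmas, assemble $\overline{\Square}$, subtract the expression for $\Square$ from Proposition \ref{newprop6.5}, and identify the central term $qc/m$ from the cocycle, the $2q\check{h}/m$ term from Proposition \ref{prop_casimir_id}, and the $(1-(z\bar{z})^{p_\ell})$ evaluation terms from the mismatch between the $z_i^k$ and $\bar{z}_i^k$ actions. The only slight imprecision is that the $\fg^\sigma$-invariance is not what makes Proposition \ref{prop_casimir_id} applicable (that identity holds on all of $L^-(\fg,\sigma)$); rather, invariance is used as in Lemma \ref{6.2.10} to cancel the residual $\CB_{\underline{0}}$ double sums against the $\check{b}_0\cdot v_1$ terms.
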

Its proof is given in Section \ref{section3}.

\subsection{Difference of Laplacians}
Recall that  $\bar{L}^-(\fg)$ (resp. $\bar{L}^-(\fg, \sigma)$) is  the $L^2$-completion of ${L}^-(\fg)$ (resp. ${L}^-(\fg, \sigma)$)
with respect to the positive definite Hermitian form $\{ ,\}_0$ on ${L}^-(\fg)$ (resp. ${L}^-(\fg, \sigma)$).

 Define the operator $T_z:   L^-(\fg,\sigma)\otimes V( \mu)_z \to  \bar{L}^-(\fg,\sigma) \otimes V( \mu)_z $ for any $z\in \mathbb{D} :=\{z\in \bc, |z|<1\}$,
and $V(\mu)$ an irreducible $\fg$-module with highest weight $\mu$ if $z\neq 0$ and $V(\mu)$ an irreducible $\fg^\sigma$-module if $z=0$, by
\begin{equation} 
\label{operator_T_z}
   T_z(x(-p) \otimes v )=  \begin{cases}  
 \sum_{b(-k)\in \hB_\sigma} \frac{1}{p} \left(ad_{b(-k)} x(-p)
\otimes   (1-(z\bar{z})^{p})
z^k \check{b} \cdot v
+ \overline{ad}_{\check{b}(k)} x(-p)
\otimes  (1-(z\bar{z})^{p-k})
(\bar{z})^{k} b \cdot v\right)\\
+ \sum_{b_0\in \CB_{\underline{0}}} \frac{1}{p} ad_{b_0} x(-p)  \otimes \left( 1-(z\bar{z})^{p}\right) \check{b}_0 \cdot v,  \quad   \text{ if } z\not= 0, \\
\sum_{b_0\in \CB_{\underline{0}}}  \frac{ 1}{p}
 [b_0, x]  (-p)    \otimes  \check{b}_0\cdot v ,   \quad \text{ if } z=0.
\end{cases}      . \end{equation}
Then, the operator $T_z$ extends to a multilinear operator
\begin{equation}
\label{op_T_z}
  T_{\vec{z}}:   \wedge^q (L^-(\fg,\sigma)) \otimes \mathscr{H}_c(\lambda)  \otimes V(\vec\mu)_{\vec{z}} \to  \wedge^q (\bar{L}^-(\fg,\sigma))  \otimes \mathscr{H}_c(\lambda)  \otimes V(\vec\mu)_{\vec{z}}   \end{equation}
defined by 
\[  T_{\vec{z}}\left(( x_1(-p_1)\wedge \cdots\wedge x_q(-p_q))
\otimes h\otimes v_0\otimes \cdots \otimes v_s  \right)=  \]
\[ \sum_{k=0}^s \sum_{\ell =1}^q   x_1(-p_1)\wedge \cdots \wedge T_{z_k} (  x_\ell(-p_\ell)\otimes v_k   )\wedge \cdots \wedge x_q(-p_q) \otimes h \otimes v_0\otimes\cdots   \otimes \hat{v_k} \otimes \cdots \otimes  v_s  , \]
for any $h\otimes v_0\otimes\cdots \otimes v_s \in   \mathscr{H}_c(\lambda)  \otimes V(\vec\mu)_{\vec{z}} $.
In view of the operator $  T_{\vec{z}}$, the 
 Nakano's identity (Theorem \ref{nakano}) clearly  takes the following form:

\begin{theorem} \label{newnakano} Follow the notation and assumptions from Theorem \ref{nakano}. In particular, $\sigma$ is a special automorphism of $\fg$ of order $m$. 
Then, for $\vec{x}(-\vec{p}) \otimes v \in [\wedge^q (L^-(\fg,\sigma)) \otimes \mathscr{H}_c(\lambda)\otimes V(\vec{\mu})_{\vec{z}}]^{\fg^\sigma}
$, where $\vec{x}(-\vec{p}):=  x_1(-p_1)\wedge \cdots\wedge x_q(-p_q)\in  \wedge^q L^-(\fg,\sigma)$ and $v\in 
 \mathscr{H}_c(\lambda)  \otimes V(\vec\mu)_{\vec{z}} $, 
we have  
\[ (\bar{\Box}- \Box )(  \vec{x}(-\vec{p}) \otimes v )= q\frac{c+2\check{h}}{m} \vec{x}(-\vec{p}) \otimes v +T_{\vec{z}}(\vec{x}(-\vec{p}) \otimes v) . \]
\end{theorem}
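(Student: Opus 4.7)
The plan is to observe that Theorem \ref{newnakano} is simply a compact reformulation of Theorem \ref{nakano}, obtained by packaging the three error sums on the right-hand side of Theorem \ref{nakano} into the single operator $T_{\vec{z}}$ defined in (\ref{op_T_z}). Since Theorem \ref{nakano} is itself the substantive content (whose proof is deferred to Section \ref{section3}), the derivation of Theorem \ref{newnakano} from it is essentially a bookkeeping exercise; the genuine difficulty sits entirely on the Theorem \ref{nakano} side.

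Concretely, I would proceed as follows. Writing $v = v_1 \otimes v_2$ with $v_1 \in \mathscr{H}_c(\lambda)$ and $v_2 = v_2^0 \otimes \cdots \otimes v_2^s \in V(\vec{\mu})_{\vec{z}}$, the term $\frac{(c+2\check{h})q}{m}\vec{x}(-\vec{p}) \otimes v$ in Theorem \ref{newnakano} matches the leading term in Theorem \ref{nakano}. For the remaining sums, I would expand the notation $(z^{k_1}\bar{z}^{k_2}\check{b})\cdot v_2$ according to its definition as $\sum_{i=0}^s v_2^0 \otimes \cdots \otimes z_i^{k_1}\bar{z}_i^{k_2}\check{b}\cdot v_2^i \otimes \cdots \otimes v_2^s$, turning each $\ell$-indexed sum in Theorem \ref{nakano} into a double sum over $\ell$ and the tensor-factor index $i$. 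Relabeling $i$ as $k$ and fixing the pair $(k,\ell)$, the inner sums over $b(-k) \in \hB_\sigma$ and over $b_0 \in \CB_{\underline{0}}$ reproduce exactly the formula defining $T_{z_k}(x_\ell(-p_\ell) \otimes v_2^k)$ in the first branch of (\ref{operator_T_z}) when $z_k \neq 0$; summing over the pairs $(k,\ell)$ then recovers $T_{\vec{z}}(\vec{x}(-\vec{p}) \otimes v)$ by its definition.

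For the marked point $z_0 = 0$ a small additional verification is needed, but it follows directly from the conventions: since every $b(-k) \in \hB_\sigma$ has $k > 0$, the prefactors $z_0^k$ and $\bar{z}_0^{k}$ annihilate both $\hB_\sigma$ contributions, while $1-(z_0\bar{z}_0)^{p_\ell}=1$ for $p_\ell>0$ leaves the $\CB_{\underline{0}}$ contribution untouched. This matches exactly the $z=0$ branch of (\ref{operator_T_z}), so the $z_0$-contribution to $T_{\vec{z}}$ agrees with the $i=0$ part of the sums in Theorem \ref{nakano}. With these identifications made term by term, the two sides of Theorem \ref{newnakano} coincide, and the only genuine obstacle is Theorem \ref{nakano}, which is handled separately in Section \ref{section3}.
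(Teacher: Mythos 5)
Your proposal is correct and is essentially the paper's own treatment: the paper offers no separate argument for Theorem \ref{newnakano}, stating only that "in view of the operator $T_{\vec{z}}$, the Nakano's identity (Theorem \ref{nakano}) clearly takes the following form," i.e.\ it is the same term-by-term repackaging you carry out. Your explicit check of the $z_0=0$ branch (that $z_0^k$ and $\bar{z}_0^{k}$ with $k>0$ kill the $\hB_\sigma$ contributions while $1-(z_0\bar{z}_0)^{p_\ell}=1$ preserves the $\CB_{\underline{0}}$ term, matching the second branch of (\ref{operator_T_z})) is a detail the paper leaves implicit, and it is accurate.
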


Let $V(\mu)$ be an irreducible representation of $\fg^\sigma$.  Now, consider the self-adjoint operator $S_0: \fg\otimes V(\mu)\to \fg\otimes V(\mu)$ given by 
\[ S_0 (x \otimes v  )= \sum_{b\in \mathcal{B}_{\underline{0}} }   [b, x] \otimes  \check{b}\cdot v      .\]
Then, by Lemma \ref{lem_eg_estimate}, for $(\fg, m)$ different from $(D_4, 3)$,  eigenvalues of $S_0$ are bounded above in absolute value by 
\begin{equation}
\label{d_mu}
 d_\mu:=  \max_{i=0,\cdots, m-1}\{    \langle \mu+2\rho_\sigma, \theta_{\underline{i} }    \rangle       \}  .   
 \end{equation}

Let $\sigma$ be a special automorphism of $\fg$ of order $m$ and let $\sigma$ act on $\mathbb{P}^1$ by $\sigma(z) = e^{\frac{2\pi i}{m}} z$ for $z\in \mathbb{P}^1$.
Let $\vec{z}=(z_0=0, z_1,\cdots, z_s)$ be distinct points in the unit disc $\mathbb{D}$ with distinct  $\Gamma$-orbits, and $\vec{\mu}=(\mu_i)_{0\leq i\leq s}$ be the weights associated to $\vec{z}$ respectively, where $\Gamma$ is the cyclic group of order $m$ generated by $\sigma$, $\mu_0$ is a dominant weight of $\fg^\sigma$ and $\mu_i \, (i\geq 1)$ is a dominant weight of $\fg$.  

Let 
\begin{equation}
\label{d_vec_mu}
d_{\vec{\mu}}:= \max_{k=1,\cdots, s} \{   d_{\mu_0},  \langle   \mu_k+2\rho,  \theta \rangle   \},
\end{equation}
where $d_{\mu_0}$ is defined in (\ref{d_mu}), and $\theta$ is the highest root of $\fg$. 

Consider the $\mathbb{C}$-linear map (for any $z\in \mathbb{D}$), 
\[\eta_z: t^{-1}\mathbb{C}[t^{-1}] \to t^{-1}\mathbb{C}[[t^{-1}]]  \]
defined by (for any $p> 0$)
\begin{equation}
\label{eta_map}
 \eta_{z}(t^{-p}):=    \frac{1}{p} \sum_{0<n<p } \left(\bar{z}^{p-n} (1-|z|^{2n})t^{-n}\right) +  \frac{1}{p}(1-|z|^{2p} )\sum_{n\geq 0} z^n t^{-n-p} . \end{equation}

 As before,  $L^-(\fg):=t^{-1}\fg[t^{-1}]$ with the Hermitian form $\{\,\}_0$ and $\bar{L}^-(\fg)$ is its $L^2$-completion. Then, it is easy to see that 
 \begin{equation} 
  T_z(x(-p) \otimes v )=\pi\left( \sum_{b\in \CB}\left([b, x]\otimes \eta_z(t^{-p})\right)\right)\otimes \check{b}\cdot v,\,\,\text{for $x(-p)\in L^-(\fg, \sigma)$ and $v\in V(\mu)_z$,}
  \end{equation} 
  where $\pi: \bar{L}^-(\fg) \to \bar{L}^-(\fg,\sigma)$ is the orthogonal projection onto the $\sigma$-invariants.  
  
  \begin{definition} {\rm For any $ z\in \mathbb{D}$, choose a M{\"o}bius transformation $\phi=\phi_z$ of the unit disc $\mathbb{D}$:
  \[ \phi(t)=\frac{t+z }{1+ \bar{z}t }   .  \]
  Then, $\phi^{-1}(t)=\frac{t-z}{1-\bar{z}t }$.  Let $H_0^{-1}$ be the operator acting on $t\mathbb{C}[[t]]$ such that 
  $H_0^{-1}(t^k)=\frac{t^k}{k}$ for any $k\geq 1$.  Then, the inverse of $H_0^{-1}$ on $t\mathbb{C}[[t]]$ is $H_0=t\frac{d}{dt}$. 
  We extend  $H_0^{-1}$ to    $\hat{H}_0^{-1}$ acting on $\mathbb{C}[[t]]$ defined by   
  \begin{equation}
  \label{newprop_2.12_conv}
  \hat{H}_0^{-1}(f(t)):=H_0^{-1} (f(t)-f(0) ), \,\,\,\text{for $f(t) \in \mathbb{C}[[t]]$}.
  \end{equation}

Following Teleman (cf. \cite[Proposition 2.5.4]{Te}), let
 $H_{z}^{-1}: L^-(\fg) \to L^+(\fg)^*$  be the operator defined by
\begin{equation} \label{eqn2.11.1} 
\left(H_{z}^{-1} (x(-p))\right)(y(n))= \langle y\otimes  \phi^{-1}\circ \hat{H}_0^{-1}\circ  \phi (t^n), x(-p)\rangle, \,\,\,\text{for $x(-p)\in L^-(\fg) $ and $y(n)\in L^+(\fg)$}.
 \end{equation}
 Then, the image of  $H_{z}^{-1}$ is contained in the completion  $\bar{L}^-(\fg)$ under the identification of   $\bar{L}^-(\fg)$
 as a subspace of  $L^+(\fg)^*$ under $\xi$, where $\xi: L^-(\fg) \to L^+(\fg)^*$ is induced from the invariant form $\langle\,,\,\rangle$ on $\hat{L}(\fg)$ (cf. Subsection \ref{cochain}).  Thus, we will think of the operator $H_z^{-1}$ as a linear map $L^-(\fg) \to \bar{L}^-(\fg)$.}
 \end{definition}

 \begin{proposition}
\label{prop_teleman} For any $z\in \mathbb{D}$, the operator  $H_{z}^{-1}$ 
 coincides with the operator 
 $x(-p)\in L^-(\fg) \mapsto 
  x\otimes \eta_{z} (t^{-p}) \in \bar{L}^{-}(\fg)$.
  \end{proposition}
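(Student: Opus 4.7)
The plan is to verify the claimed equality by a direct coefficient comparison, using the invariant form to translate the operator identity into an identity of formal power series. By the definition of $\langle\cdot,\cdot\rangle$ on $\hat L(\fg)$, for any $x(-p)\in L^-(\fg)$ and $y(n)\in L^+(\fg)$,
\begin{align*}
\xi\bigl(x\otimes \eta_z(t^{-p})\bigr)(y(n)) &=\langle x,y\rangle\cdot \bigl(\text{coefficient of } t^{-n} \text{ in }\eta_z(t^{-p})\bigr),\\
\bigl(H_z^{-1}(x(-p))\bigr)(y(n)) &=\langle x,y\rangle\cdot \bigl(\text{coefficient of } t^p \text{ in } \phi^{-1}\circ\hat H_0^{-1}\circ\phi(t^n)\bigr).
\end{align*}
Thus it suffices to prove that, for all integers $n,p\geq 1$, the coefficient of $t^p$ in $\phi^{-1}\circ\hat H_0^{-1}\circ\phi(t^n)$ coincides with the coefficient of $t^{-n}$ in $\eta_z(t^{-p})$ as read off from \eqref{eta_map}.

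The key observation is that $\hat H_0^{-1}$ admits the integral representation $\hat H_0^{-1}(F)(t)=\int_0^t (F(s)-F(0))/s\,ds$ for $F\in\bc[[t]]$. Taking $F(s)=\phi(s)^n$ (so $F(0)=z^n$), and then substituting $s=\phi^{-1}(t)$, yields
\[
\phi^{-1}\circ\hat H_0^{-1}\circ\phi(t^n)=\int_0^{\phi^{-1}(t)}\frac{\phi(s)^n-z^n}{s}\,ds.
\]
The decisive manoeuvre is the change of variable $v=\phi(s)$, which sends the limits $0$ and $\phi^{-1}(t)$ to $z$ and $t$ respectively. Using $\phi^{-1}(v)=(v-z)/(1-\bar z v)$ together with $(\phi^{-1})'(v)=(1-|z|^2)/(1-\bar z v)^2$, the integral transforms into
\[
\int_z^t\frac{(1-|z|^2)(v^n-z^n)}{(1-\bar z v)(v-z)}\,dv,
\]
and the algebraic identity $v^n-z^n=(v-z)\sum_{j=0}^{n-1}v^{n-1-j}z^j$ then removes the apparent singularity at $v=z$, leaving a polynomial in $v$ divided by $1-\bar z v$.

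What remains is routine: expand $(1-\bar z v)^{-1}=\sum_{k\geq 0}(\bar z)^k v^k$, integrate term by term, and collect the coefficient of $t^p$ (the constant-in-$t$ contributions coming from the lower limit $v=z$ are irrelevant for $p\geq 1$). The resulting double sum, constrained by $n-j+k=p$, reduces to a finite geometric series in $|z|^2$, and evaluates to
\[
\frac{\bar z^{\,p-n}(1-|z|^{2n})}{p}\quad\text{when $n\leq p$},\qquad \frac{z^{\,n-p}(1-|z|^{2p})}{p}\quad\text{when $n\geq p$},
\]
which is precisely the coefficient of $t^{-n}$ in $\eta_z(t^{-p})$ prescribed by \eqref{eta_map}. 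The only non-routine ingredient is the change of variable $v=\phi(s)$, which collapses the composition of three operators into a single rational integrand; once this is in place there is no serious obstacle, and the required convergence is automatic because $\phi^{-1}(0)=-z\in\mathbb{D}$ ensures that the output lies in $\bc[[t]]$, so that the coefficient extraction is well defined.
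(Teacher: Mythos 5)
Your proof is correct, and while it shares the paper's overall framework --- both arguments use the invariant form to reduce the operator identity to showing that the coefficient of $t^p$ in $\phi^{-1}\circ\hat{H}_0^{-1}\circ\phi(t^n)$ equals the coefficient of $t^{-n}$ in $\eta_z(t^{-p})$ --- your computation of those coefficients is genuinely different. The paper first \emph{guesses} the expansion $(\phi^{-1}\circ\hat{H}_0^{-1}\circ\phi)(t^n)=\sum_{k\ge 0}a_k^n t^k$ with the coefficients of (\ref{eqnnew2.12.3}) and then \emph{verifies} it by applying $H_0$ back, i.e.\ by checking $t^n-z^n=\sum_{k\ge 1}\frac{ka_k^n}{1-|z|^2}t^{k-1}(t-z)(1-\bar z t)$ degree by degree; this requires an induction on $k$ with several base cases and separate treatments of the ranges $k\le n$, $k=n+1$, $k=n+2$, $k\ge n+3$. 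You instead \emph{derive} the coefficients directly from the integral representation $\hat{H}_0^{-1}(F)(t)=\int_0^t\bigl(F(s)-F(0)\bigr)\,ds/s$, and your substitution $v=\phi(s)$ collapses the three-fold composition into the single integral $\int_z^t (1-|z|^2)(v^n-z^n)\,dv/\bigl((1-\bar z v)(v-z)\bigr)$; factoring $v^n-z^n$, expanding $(1-\bar z v)^{-1}$ geometrically, and summing a finite geometric series in $|z|^2$ then yields the two-case formula in one stroke. I checked the coefficient you extract: it is $\frac{1}{p}\bar z^{\,p-n}(1-|z|^{2n})$ for $n\le p$ and $\frac{1}{p}z^{\,n-p}(1-|z|^{2p})$ for $n\ge p$ (consistent at $n=p$), which matches (\ref{eta_map}), and the discarded lower-limit contributions indeed affect only the constant term. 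Your route is shorter and explains \emph{why} the formula holds (it is the M\"obius pullback of the elementary kernel of $H_0^{-1}$), at the mild cost of working with holomorphic functions on $\mathbb{D}$ (term-by-term integration and the change of variables) rather than purely formal power-series manipulation; note that both proofs implicitly need this analytic interpretation anyway, since $\phi$ and $\phi^{-1}$ have nonzero constant terms and so cannot be composed as formal power series.
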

  \begin{proof}
  We first  show that, for any $n\geq 1$,  we have the following equality:
  \begin{equation}\label{eqn2.12.2} ( \phi^{-1}\circ \hat{H}_0^{-1}\circ  \phi) (t^n) =\sum_{k\geq 0} a_k^n t^k , \end{equation}
  where 
  \begin{equation}\label{eqnnew2.12.3} a_k^n= \begin{cases}
   \frac{1}{k} (1-|z|^{2k} )z^{n-k}, \quad  \text{ when $1\leq k\leq  n$ }\\
   \frac{1}{k} \bar{z}^{k-n} (1-|z|^{2n}), \quad \text{when $k\geq n$ }. 
   \end{cases}
      \end{equation}
      Observe that when $k=n$, both of the above cases give the same  value.
      
 By (\ref{newprop_2.12_conv}), it suffices to show that 
\[    \phi(t)^n-z^n = \sum_{k\geq 1} a_k^n  t\frac{d}{dt} ( \phi(t)^k ) =    \sum_{k\geq 1}k a_k^n  \phi(t)^{k-1} t \phi(t)' = \sum_{k\geq 1}k a_k^n  \phi(t)^{k-1} (1-|z|^2)\frac{t}{(1+\bar{z} t)^2}  .  \]
  Equivalently, 
\begin{equation}
\label{newprop_2.12_identity}
 t^n-z^n=   \sum_{k\geq 1}k a_k^n t^{k-1} (1-|z|^2) \frac{\phi^{-1}(t)}{(1+\bar{z} \phi^{-1}(t))^2}
  = \sum_{k\geq 1}\frac{k a_k^n}{1-|z|^2} t^{k-1}(t-z)(1-\bar{z}t) .    \end{equation}
  We compare this identity on each degree of $t$.  When the degree of $t$ is 0,  we get $a_1^n=z^{n-1}(1-|z|^2)$.  
  For any $1\leq k\leq n$, we now prove the identity $a_k^n=
   \frac{1}{k} (1-|z|^{2k} )z^{n-k}$. When $k=1$, we have just confirmed it.  When $k\geq 2$, by comparing the coefficients of $t$ in (\ref{newprop_2.12_identity}), we get 
   $$a_2^n=\frac{1}{2}z^{n-2}(1-|z|^4) -\delta_{n, 1} \frac{1-|z|^2}{2z}.$$
   This proves (\ref{eqn2.12.2}) for $k=2$ and any $n\geq 1$.  
   
   Thus, we can assume $n\geq 3$. We induct on $k$. When $3\leq k\leq n$, comparing the coefficients of $t^{k-1}$, we have 
   \begin{align*}
    ka_k^n&= \frac{1}{ z} ( -\bar{z} (k-2)a^n_{k-2} + (1+|z|^2) (k-1)a^n_{k-1}   ) \\
          &=  \frac{1}{ z} ( -\bar{z}(1-|z|^{2k-4} )z^{n-k+2} +  (1+|z|^2) (1-|z|^{2k-2})z^{n-k+1}  )\\
          &=(1-|z|^{2k})z^{n-k}. 
     \end{align*}
 Now, we consider the case $k>n$. When $k=n+1$, comparing the coefficients of $t^n$ in (\ref{newprop_2.12_identity}), we get 
 \begin{align*}
  (n+1)a^n_{n+1}&=\frac{1}{z}  ( -(1-|z|^2) +  (n-1)a^n_{n-1}(-\bar{z}) +na^n_n(1+|z|^2)    )  \\
               &=  \frac{1}{z}  ( -(1-|z|^2) +   (1- |z|^{2n-2})(-|z|^2) + (1-|z|^{2n})(1+|z|^2)    )\\
               &=   \bar{z}(1-|z|^{2n}). 
  \end{align*}
 When $k=n+2$,  we have 
 \[(n+2)a^n_{n+2}=\frac{1}{z } ( na^n_n(-\bar{z})+ (n+1)a^n_{n+1} (1+|z|^2)    ) = \bar{z}^2 (1-|z|^{2n}) .    \]
 When $k\geq n+3$, inducting on $k$, we have 
 \begin{align*}  ka_k^n&= \frac{1}{z} ((k-2)a^n_{k-2} (-\bar{z})   + (k-1)a^n_{k-1}(1+|z|^2)    )  \\
            &=   \frac{1}{z}  (-\bar{z}^{k-1-n} (1-|z|^{2n}) +  \bar{z}^{k-1-n} (1-|z|^{2n} )(1+|z|^2)      )\\
            &= \bar{z}^{k-n}(1-|z|^{2n}). 
  \end{align*}
  This finishes the proof of the identity (\ref{eqn2.12.2}).

    Write 
  $$ H_{z}^{-1} (x(-p))= \sum_{n>0; b\in \mathcal{B} }\, c_{b(-n)} b(-n), \,\,\,\text{for some  $c_{b(-n)}\in \mathbb{C}$}.$$
  Pairing the right side of the above identity with $\check{b} (n)$ under   $\langle\,,\,\rangle$, we get
  $$   c_{b(-n)} =   \langle\check{b},x\rangle   \Res_{t=0} \left( \phi^{-1}\circ \hat{H}_0^{-1}\circ  \phi (t^n) t^{-p-1}\right).$$
  Thus,
\begin{align}  
  H_{z}^{-1} (x(-p))&=  \sum_{n>0; b\in \mathcal{B}}\,   \langle\check{b},x\rangle   \Res_{t=0} \left( \phi^{-1}\circ \hat{H}_0^{-1}\circ  \phi (t^n) t^{-p-1}\right) b(-n)  \notag\\
  &= \sum_{n>0}\,     \Res_{t=0} \left( \phi^{-1}\circ \hat{H}_0^{-1}\circ  \phi (t^n) t^{-p-1}\right) x(-n)  \notag\\
  &= \sum_{n>0}\, a^n_p(x(-n)), \,\,\,\text{where $a^n_p$ is given by the equation (\ref{eqnnew2.12.3})}\notag\\
  &= \frac{1}{p} \sum_{0<n<p} \,\bar{z}^{p-n} (1-|z|^{2n})x(-n) +\frac{1}{p}   (1-|z|^{2p}) \sum_{n\geq p}\,z^{n-p} x(-n).
  \end{align}
  Thus, from the above identity, the operator $ H_{z}^{-1}$ coincides with the operator 
  $$x(-p) \mapsto   x\otimes \eta_z(t^{-p}),\,\,\text{for $x(-p) \in L^-(\fg)$}.$$
  This proves the proposition. 
   \end{proof}

   We now recall the following  proposition due to  Teleman (\cite[Proposition 2.5.5]{Te}).  
\begin{proposition}
\label{prop_tel_est}
For any $A=\sum_{p\geq 1}x_p(-p)\in L^-(\fg) $ and any $\epsilon>0$, 
\[  ||  \sum_{k=0}^s   \sum_{p\geq 1}   x_p\otimes \eta_{z_k} (t^{-p}) ||_0\leq  (1+\epsilon)  ||A||_0  ,  \]
provided the points $\{z_0, \cdots, z_s\}$ are sufficiently far apart in the hyperbolic metric on the unit disc $\mathbb{D}$. 
\end{proposition}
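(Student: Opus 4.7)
The plan is to recast the expression as the norm of a single operator applied to $A$, and then to exploit the disc geometry of hyperbolically separated points. By Proposition~\ref{prop_teleman}, one has $\sum_{p\geq 1}x_p\otimes \eta_{z_k}(t^{-p}) = H_{z_k}^{-1}(A)$, so the quantity to bound is $\|\sum_{k=0}^s H_{z_k}^{-1}(A)\|_0$, and the assertion becomes that the operator $\sum_k H_{z_k}^{-1}$ has norm at most $1+\epsilon$ on $L^-(\fg)$. To work in a familiar analytic setting, I would use the substitution $u = t^{-1}$ to identify $L^-(\fg)$ with $\fg$-valued holomorphic functions on $\mathbb{D}$ vanishing at $0$; under this identification $\{\,,\,\}_0$ corresponds to a weighted $L^2$-norm for which the disc-automorphism group acts (up to the inherent conformal factor) as unitaries, and the factorisation $H_{z}^{-1} = M_{\phi_z^{-1}}\circ \hat{H}_0^{-1}\circ M_{\phi_z}$ presents each $H_{z_k}^{-1}(A)$ as a Möbius-conjugated antiderivative of $A$ centred at $z_k$.

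Next, I would expand
\[
\|\textstyle\sum_k H_{z_k}^{-1}(A)\|_0^2 = \textstyle\sum_k \|H_{z_k}^{-1}(A)\|_0^2 + 2\,\mathrm{Re}\sum_{j<k}\{H_{z_j}^{-1}(A),\, H_{z_k}^{-1}(A)\}_0
\]
and estimate the diagonal and cross terms separately. The cross terms may be read off directly from the formula for $\eta_z$ in \eqref{eta_map}: the pairing $\{H_{z_j}^{-1}(A), H_{z_k}^{-1}(A)\}_0$ turns out to be a Cauchy-type bilinear form whose kernel decays with the hyperbolic distance $d_H(z_j, z_k)$, so these contributions can be absorbed into the allotted $\epsilon$-slack provided the points are taken sufficiently far apart. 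For the diagonal, I would argue by a Carleson-type (disjoint support) estimate: when the hyperbolic balls around the $z_k$ are pairwise disjoint, the ``energies'' $\|H_{z_k}^{-1}(A)\|_0^2$ are effectively supported in disjoint regions of $\mathbb{D}$, and their total is bounded by $(1+\tfrac{\epsilon}{2})\|A\|_0^2$.

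The principal obstacle lies in this diagonal bound. Individually each $\|H_{z_k}^{-1}(A)\|_0$ can be of order $\|A\|_0$, so the triangle inequality gives only a bound of order $s+1$. Obtaining the sharp factor $1+\epsilon$ reflects the disc-function-theoretic fact that the $H_{z_k}^{-1}(A)$ are not arbitrary elements of $\bar{L}^-(\fg)$ but images of the \emph{single} input $A$ under the Möbius-conjugated antiderivatives, and their localised weighted masses cannot exceed that of $A$ itself when the supports are hyperbolically disjoint. Making this rigorous, which is the analytic heart of \cite[Proposition 2.5.5]{Te}, would require concrete estimates on the reproducing kernels $(1-\bar{z}_j z_k)^{-1}$ of the underlying Hilbert space rather than any purely algebraic manipulation of the series defining $\eta_z$.
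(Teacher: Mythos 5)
The paper does not actually prove this proposition: it is stated as a recollection of Teleman's result and justified solely by the citation \cite[Proposition 2.5.5]{Te}, so there is no in-paper argument to measure your proposal against. On its own terms, your outline is a plausible reconstruction of the expected strategy: identify $\sum_{p}x_p\otimes\eta_{z_k}(t^{-p})$ with $H_{z_k}^{-1}(A)$ via Proposition \ref{prop_teleman}, use that each $H_{z_k}^{-1}$ is conjugate to $\hat{H}_0^{-1}$ under the M\"obius action preserving $\{\,,\,\}_0$, and then show that the $s+1$ contributions are almost orthogonal once the centres are hyperbolically separated.

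As a proof, however, it has a genuine gap, which you yourself flag: both substantive estimates are asserted rather than established. The cross-term decay is claimed with no quantitative bound on the pairings $\{H_{z_j}^{-1}(A),H_{z_k}^{-1}(A)\}_0$ in terms of the hyperbolic distance, and the diagonal bound $\sum_k\|H_{z_k}^{-1}(A)\|_0^2\le(1+\tfrac{\epsilon}{2})\|A\|_0^2$ is not a literal ``disjoint support'' statement --- the operators $H_{z_k}^{-1}$ are M\"obius-conjugated antiderivatives rather than multiplications by cutoffs, and each one individually has operator norm $1$ (the eigenvalue at $p=1$), so a nontrivial almost-orthogonality input of Cotlar--Stein type is genuinely needed to beat the naive bound of order $s+1$. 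Deferring exactly this step to \cite[Proposition 2.5.5]{Te} is legitimate --- it is precisely what the authors do --- but then your argument reduces to the same citation the paper makes, and the surrounding reductions, though correct, do not by themselves constitute an independent proof.
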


\begin{proposition} \label{prop_norm_T_est} 
Let $\sigma$ be a special automorphism of $\fg$ of order $m$ and let $(\fg, m)$ be different from $(D_4, 3)$. 
For any given $\epsilon >0$, we can choose points $\{z_0=0, z_1, \dots, z_s\}$ in $\mathbb{D}$ far apart in the hyperbolic metric such that 
the operator $T_{\vec{z}}$ defined in (\ref{op_T_z})  satisfies   $ \mid\{ T_{\vec{z}}(A) , A  \}_0\mid\leq   q d_{\vec\mu} (1+\epsilon)  || A||_o^2 $, 
for any $A$ in $\wedge^q (L^-(\fg,\sigma)) \otimes \mathscr{H}_c(\lambda)\otimes V(\vec{\mu})_{\vec{z}}$. 
\end{proposition}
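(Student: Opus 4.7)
The plan is to prove $|\{T_{\vec{z}}(A), A\}_0| \leq q\, d_{\vec{\mu}}(1+\epsilon)\|A\|_0^2$ by reducing to a per-wedge-factor bound and combining Lemma \ref{lem_eg_estimate} with Proposition \ref{prop_tel_est}. Writing $T_{\vec{z}} = \sum_{\ell=1}^q T_{\vec{z}}^{(\ell)}$, with $T_{\vec{z}}^{(\ell)}$ collecting those terms in (\ref{op_T_z}) acting on the $\ell$-th wedge factor (summed over all marked points $k = 0, 1, \ldots, s$), the triangle inequality reduces to the single-slot bound $|\{T_{\vec{z}}^{(\ell)}(A), A\}_0| \leq d_{\vec{\mu}}(1+\epsilon)\|A\|_0^2$ for each $\ell$. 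For a fixed $\ell$, I would use Proposition \ref{prop_teleman} to rewrite each $T_{z_k}^{(\ell,k)}(x_\ell(-p_\ell) \otimes v) = \sum_b \pi\bigl(H_{z_k}^{-1}[b, x_\ell](-p_\ell)\bigr) \otimes \check{b}^{(k)} v$, where $\check{b}^{(k)}$ denotes the action on the $k$-th tensor factor $v_k$ of $v = v_0\otimes\cdots\otimes v_s$ (with the convention $\check{b}^{(0)} = 0$ when $b \notin \CB_{\underline{0}}$, since $V(\mu_0)$ is only a $\fg^\sigma$-module). This factorization isolates the time-variable operator $H_{z_k}^{-1}$ from the Casimir-like operator $S_k(x \otimes v_k) := \sum_b [b, x] \otimes \check{b}\, v_k$ on $\fg \otimes V(\mu_k)$, which is self-adjoint with respect to the contravariant Hermitian forms.

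Two bounds then control the pieces: Lemma \ref{lem_eg_estimate} (applied to each graded summand $\fg_{\underline{i}} \otimes V(\mu_0)$ for $k = 0$, which is where the hypothesis $(\fg,m) \neq (D_4, 3)$ enters) together with the standard $\fg$-Casimir eigenvalue calculation on $\fg \otimes V(\mu_k)$ for $k \geq 1$ (yielding eigenvalues $\leq \langle \mu_k + 2\rho, \theta \rangle$) gives $\|S_k\|_{\mathrm{op}} \leq d_{\vec{\mu}}$; Proposition \ref{prop_tel_est} gives $\|\sum_k H_{z_k}^{-1}\|_{\mathrm{op}} \leq 1+\epsilon$ on $L^-(\fg)$ once the points are chosen far apart in the hyperbolic metric. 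In particular, reading the latter estimate on $t^{-p} \in L^-(\fg)$ and comparing the coefficient of $t^{-p}$ in $\sum_k \eta_{z_k}(t^{-p})$ gives the scalar inequality $\tfrac{1}{p}\sum_k(1 - |z_k|^{2p}) \leq 1 + \epsilon$, uniformly in $p$.

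The two bounds combine by a Cauchy--Schwarz argument that first gathers the sum $\sum_k H_{z_k}^{-1}$ on the time side, and only afterwards uses self-adjointness of the $S_k$. The pure-tensor case $A = x(-p)\otimes v$ already exhibits the mechanism: a direct computation yields $\{T_{\vec{z}}(A), A\}_0 = \tfrac{1}{p^2}\sum_k (1 - |z_k|^{2p})\{S_k(x \otimes v_k), x \otimes v_k\}\prod_{i \neq k}\|v_i\|^2$, and the two bounds above immediately reduce this to $d_{\vec{\mu}}(1+\epsilon)\|A\|_0^2$. The main obstacle is extending this to general (non-pure) $A$: the coupling of the index $k$ between the Teleman operator $H_{z_k}^{-1}$ and the slot-specific $V$-action $\check{b}^{(k)}$ means that a naive per-$k$ estimate loses a factor of $s+1$, and the sharpening to $(1+\epsilon)$ requires handling the off-diagonal terms $(k,k')$ with $k \neq k'$ coherently via Teleman's estimate applied to the summed operator, rather than to individual summands. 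Once this coherent Cauchy--Schwarz is in place, summing the single-slot bound over $\ell = 1, \ldots, q$ yields the proposition.
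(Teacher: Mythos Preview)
Your outline identifies the right ingredients—the factorization $T^\ell_{z_k}=S_k^\ell\circ \bar\eta^\ell_{z_k}$ (where $\bar\eta^\ell_{z_k}$ is the extension of $H_{z_k}^{-1}$ to the $\ell$-th slot), the eigenvalue bound $\|S_k\|\leq d_{\vec\mu}$ from Lemma~\ref{lem_eg_estimate}, and Teleman's estimate Proposition~\ref{prop_tel_est}—but the way you propose to combine them has a genuine gap. You yourself flag the obstacle (the $k$-coupling between $S_k^\ell$ and $\bar\eta^\ell_{z_k}$ threatens to cost a factor $s+1$), and your proposed resolution, a ``coherent Cauchy--Schwarz'' that first sums $\sum_k H_{z_k}^{-1}$ and only afterwards extracts $d_{\vec\mu}$, does not work as stated: the operators $S_k^\ell$ depend on $k$ (they act on different $V(\mu_k)$-factors), so $\sum_k S_k^\ell\bar\eta^\ell_{z_k}$ does not factor as a single $S$ composed with $\sum_k\bar\eta^\ell_{z_k}$.

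The two facts that close this gap, and that your proposal does not invoke, are: (i) each $\bar\eta^\ell_{z_k}$ is a \emph{positive} operator (it is M\"obius-conjugate to $\bar\eta^\ell_0$, which is diagonal with eigenvalues $1/p>0$), and (ii) $S_k^\ell$ commutes with $\bar\eta^\ell_{z_k}$ (the former acts only on the Lie-algebra and $V(\mu_k)$ factors, the latter only on the time variable). Together these give, for each $k$ separately, the sharp intermediate bound
\[
\bigl|\{S_k^\ell\,\bar\eta^\ell_{z_k}A,\,A\}_0\bigr|\ \le\ d_{\vec\mu}\,\{\bar\eta^\ell_{z_k}A,\,A\}_0,
\]
obtained by decomposing $A$ into $S_k^\ell$-eigenspaces and using positivity of $\bar\eta^\ell_{z_k}$ on each piece. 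Summing over $k$ now yields $d_{\vec\mu}\,\{\sum_k\bar\eta^\ell_{z_k}A,\,A\}_0$ on the right, and only at this stage does one apply Teleman's norm bound $\|\sum_k\bar\eta^\ell_{z_k}\|\le 1+\epsilon$ together with Cauchy--Schwarz. The order of operations is thus the reverse of what you wrote: first extract $d_{\vec\mu}$ per-$k$ via positivity and commutation, then sum and apply Teleman's estimate.
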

\begin{proof}
By the definition of $T_{\vec{z}}$, 
\begin{equation}\label{eqn6.12.1}  \{ T_{\vec{z}}(A), A\}_0 =\sum_{\ell=1}^q    \{ \sum_{k=0}^s  T^\ell_{z_k}  (A) , A   \}_0 , \end{equation}
where $T^\ell_{z_k}(A)$ denotes the action only on the $\ell$-th factor of $\wedge^q L^-(\fg,\sigma)$ and the factor $V(\mu_k)_{z_{k}}$. 

Let $\rho^k$ denote the representation $V(\mu_k)$ of $\fg^{\Gamma_{z_k} }$. 
Define the operator $S_k: =  \sum_{b\in \mathcal{B} } ad_b \boxtimes  \rho^k_{\check{b}}$ on $L^-(\fg) \otimes V(\mu_k)$  for $k\geq 1$ (which clearly extends to an operator on $\bar{L}^-(\fg)\otimes V(\mu_k)$); and when $k=0$, denote $S_0= \sum_{b\in \mathcal{B}_{\underline{0}}}  ad_b \boxtimes  \rho^0_{\check{b}}$ on $L^-(\fg)\otimes V(\mu_0)$.    
Consider
the operator 
\[\bar{\eta}_{z_k}: L^-(\fg)\otimes V(\mu_k)\to \bar{L}^-(\fg)\otimes V(\mu_k)\]
defined by 
\[ \bar{\eta}_{z_k}  (x(-p) \otimes v  )= (x\otimes \eta_{z_k} (t^{-p})  )\otimes v, \quad \text{ for any }  x\in \fg, p\geq 1 \text{ and } v\in V(\mu_k),  \]
where $\eta_{z_k}$ is defined in (\ref{eta_map}), and extend it multi-linearly  on $\wedge^q (L^-(\fg)) \otimes V(\mu_k)$. By Proposition \ref{prop_tel_est}, $\bar{\eta}_{z_k}$ extends uniquely to a continuous operator on 
$ \bar{L}^-(\fg)\otimes V(\mu_k)$ with respect to the Hermitian metric $\{\,,\,\}_0$ on $\bar{L}^-(\fg)$.
Then, the operator $S_k$ is  self-adjoint
and it commutes with $\bar{\eta}_{z_k}$ for any $0\leq k\leq s$. We extend both the operators $S_k^\ell$ and $  \bar{\eta}^\ell_{z_k}$ to  $ \wedge^q (\bar{L}^-(\fg)) \otimes \mathscr{H}_c(\lambda)\otimes V(\vec{\mu})_{\vec{z}}$ by defining them to be the identity operator on the $\mathscr{H}_c(\lambda)$ factor, where $S_k^\ell$ is the operator $S_k$ acting only on the $\ell$-th factor of $\wedge^q \bar{L}^-(\fg)$ and the $k$-th factor $V(\mu_k)$ of $ V(\vec{\mu})_{\vec{z}}$ and similarly for $ \bar{\eta}^\ell_{z_k}$.
Moreover, for $A, B\in \wedge^q (L^-(\fg,\sigma)) \otimes \mathscr{H}_c(\lambda)\otimes V(\vec{\mu})_{\vec{z}}$,
\begin{equation} \label{eqn6.12.2}  \{ T^\ell_{z_k}A, B   \}_0=  \{  S_k^\ell \circ  \bar{\eta}^\ell_{z_k} (A) , B   \}_0    ,
\end{equation}
since for any $x(-p), x'(-p')\in L^-(\fg,\sigma)$,  any $0\leq k\leq s$ and $v\in V(\mu_k)$,
\[   \{ \sum_{b\in \mathcal{B}}     \pi ( [b,x] \otimes  \eta_{z_k}(t^{-p})    )   \otimes \check{b} \cdot v,   x'(-p')\otimes v    \}_0  =   \{ \sum_{b\in \mathcal{B}}      ( [b,x] \otimes  \eta_{z_k}(t^{-p})    )  \otimes \check{b} \cdot v,   x'(-p')\otimes v    \}_0, \]

We next show that for any $1\leq \ell\leq  q$ and $A\in \wedge^q (L^-(\fg,\sigma)) \otimes \mathscr{H}_c(\lambda)\otimes V(\vec{\mu})_{\vec{z}}$, 
\begin{equation}
\label{key_ineq}
   \mid\{  \sum_{k=0}^s S_k^\ell \circ  \bar{\eta}^\ell_{z_k} (A) ,A   \}_0  \mid     \leq     d_{\vec{\mu}} \sum_{k=0}^s \{ \bar{\eta}_{z_k}^\ell (A), A    \}_0 .   \end{equation}
To prove this, observe first that any eigenvalue of the operator $S_k$, acting on $\fg\otimes V(\mu_k)$ for $k\geq 1$, is bounded above in absolute value by $\langle  \mu_k+2\rho, \theta  \rangle$ (by Lemma \ref{lem_eg_estimate} for $\sigma=\Id$). Further, for $z_0=0$, i.e.,  $k=0$, by Lemma \ref{lem_eg_estimate}, the operator $S_0$ acting on $\fg\otimes V(\mu_0)$ has all its eigenvalues in absolute value bounded above by $d_{\mu_0}$ defined in (\ref{d_mu}).   Thus, all the eigenvalues in absolute value of any of the operators $\{  S_k  \}_{0\leq k \leq s}$ are bounded above by $d_{\vec{\mu}}$.  

We now prove (\ref{key_ineq}).   Decompose $\wedge^q (L^-(\fg))\otimes \mathscr{H}_c(\lambda)\otimes 
V(\vec{\mu})_{\vec{z}}$ into the direct sum of eigenspaces under $S_k^\ell$ as follows: 
\[  \wedge^q(L^-(\fg))\otimes  \mathscr{H}_c(\lambda)\otimes
V(\vec{\mu})_{\vec{z}} =\perp_{\mid n\mid \leq d_{\vec{\mu} }}  E_n^k  , \]
where $E^k_n$ is the eigenspace of the self-adjoint operator $S_k^\ell$ with eigenvalue $n$. Since the operator $\bar{\eta}_{z_k}^\ell$ commutes with $S_k^\ell$ for any $0\leq k\leq s$ and any $1\leq \ell \leq q$, the eigenspaces $E^k_n$ are stable under the action of  $\bar{\eta}_{z_k}^\ell$. Write 
\[A= \sum_n A_n^k,   \,\text{ for } A_n^k\in E^k_n . \]
By orthogonality, $A_n^k\perp A_m^k$ for $n\not= m$. Thus,    we get (for any fixed $0\leq k\leq s$ and $1\leq \ell \leq q$)
\begin{align}
\label{S_k_identity}
 \mid\{ S_k^\ell \circ  \bar{\eta}^\ell_{z_k}  (A) , A \}_0 \mid   &=\mid \{ \sum_n  n \,  \bar{\eta}^\ell_{z_k} (A_n^k)   , A\}_0 \mid= \mid\sum_{\mid n\mid \leq d_{\vec{\mu}}}\, n\{   \bar{\eta}^\ell_{z_k} (A_n^k)   , A_n^k\}_0 \mid\notag\\
 & \leq   d_{{\vec{\mu}}}  \sum_n   \{ \bar{\eta}^\ell_{z_k} (A_n^k)   , A_n^k\}_0 =d_{\vec{\mu}}    \{ \bar{\eta}^\ell_{z_k}  (A) , A  \}_0 , \end{align}
since $  \bar{\eta}^\ell_{z_k}  $ is a positive operator as it is diagonalizable with all its eigenvalues $\{\frac{1}{p}\}_{p\in \mathbb{Z}_{>0} }$ (cf. \cite[Theorem 12.32]{R}).  To prove this, observe that $\bar{\eta}^\ell_{z_k} $ is conjugate of the operator $\bar{\eta}^\ell_{0} $ under the action of $\phi_{z_k}$ on the holomorphic functions on the unit disc which keeps the inner product $\{, \}_0$ invariant (use Proposition 
\ref{prop_teleman}). 
 Thus, by identity (\ref{S_k_identity}), for any $1\leq \ell\leq q$, the inequality (\ref{key_ineq}) holds.  

Thus, by Proposition \ref{prop_tel_est}, identities \eqref{eqn6.12.1} and \eqref{eqn6.12.2} (for $B=A$) and the inequality (\ref{key_ineq}), we have 
\[    \mid\{  T_{\vec{z}}A, A  \}_0  \mid = \mid\sum_{\ell=1}^q   \{   \sum_{k=0}^s  T^\ell_{z_k}  (A) , A \}_0 \mid \leq    q d_{\vec{\mu}} (1+\epsilon)   ||A||_0^2 ,\]
provided the points $\{z_k\}$ are far apart under the hyperbolic metric in the unit disc.    This proves our proposition. 

\end{proof}

We now come to the main theorem of this paper. 

\begin{theorem} \label{thm3.15} Let $\sigma$ be a special automorphism of $\fg$ of order $m$ and let $(\fg, m)$ be different from $(D_4, 3)$.
For any $q\geq 1$, the Lie algebra cohomology 
\[ H^q(L^+(\fg,\sigma),  \hat{\mathscr{H}}_c(\lambda)\otimes V(\vec{\mu})_{\vec{z}})^{\fg^\sigma}   =0, \quad \text{ if }\, d_{\vec{\mu}}< \frac{ c+2\check{h}}{m},\]
and the points $\vec{z}= \{z_0 = 0, z_1, \dots, z_s\}$ in $\mathbb{D}$ are far apart in the hyperbolic metric.
\end{theorem}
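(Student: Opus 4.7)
The plan is to carry out Teleman's Hodge-theoretic strategy in this twisted setting, the two key inputs being the Nakano identity (Theorem \ref{newnakano}) and the norm estimate on $T_{\vec{z}}$ (Proposition \ref{prop_norm_T_est}), both already established above.

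First, I would identify cohomology classes with $\fg^\sigma$-invariant harmonic representatives. Via the embedding induced by $\xi$ in Subsection \ref{cochain}, a suitable $L^2$-completion of $\wedge^q(L^-(\fg,\sigma)) \otimes \mathscr{H}_c(\lambda) \otimes V(\vec{\mu})_{\vec{z}}$ sits inside $C^q$, and the Laplacian $\bar{\Box} = dd^* + d^*d$ is formally self-adjoint on it. A standard Hodge-theoretic argument, adapted to this setting, shows that every class in $H^q$ admits a representative in $\ker\bar{\Box}$. Since $\fg^\sigma$ acts semisimply and commutes with all operators in sight, it suffices to prove that $\ker\bar{\Box}$ contains no non-zero $\fg^\sigma$-invariant element in degree $q \geq 1$.

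For an arbitrary $\fg^\sigma$-invariant
$$A \in [\wedge^q(L^-(\fg,\sigma)) \otimes \mathscr{H}_c(\lambda) \otimes V(\vec{\mu})_{\vec{z}}]^{\fg^\sigma},$$
Theorem \ref{newnakano} gives
\begin{equation*}
\{\bar{\Box}(A),A\}_0 \;=\; \{\Box(A),A\}_0 \;+\; q\,\frac{c+2\check{h}}{m}\,\|A\|_0^2 \;+\; \{T_{\vec{z}}(A),A\}_0.
\end{equation*}
Since $\Box = \partial_1\partial_1^*$, the first term is non-negative. Fix any $\epsilon > 0$ with $d_{\vec{\mu}}(1+\epsilon) < (c+2\check{h})/m$ (possible by the hypothesis $d_{\vec{\mu}} < (c+2\check{h})/m$) and take $\vec{z}$ sufficiently far apart in the hyperbolic metric; then Proposition \ref{prop_norm_T_est} yields $|\{T_{\vec{z}}(A),A\}_0| \leq q\,d_{\vec{\mu}}(1+\epsilon)\|A\|_0^2$, and hence
\begin{equation*}
\{\bar{\Box}(A),A\}_0 \;\geq\; q\!\left(\frac{c+2\check{h}}{m} - d_{\vec{\mu}}(1+\epsilon)\right)\!\|A\|_0^2 \;>\; 0 \quad \text{for } A\neq 0.
\end{equation*}
Extending this inequality by continuity to the $L^2$-completion forces $\ker\bar{\Box} = 0$ on $\fg^\sigma$-invariants in degree $q \geq 1$, and the cohomology vanishes.

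The main obstacle is justifying the Hodge decomposition rigorously in this infinite-dimensional completed setting: the Laplacian $\bar{\Box}$ does not preserve the natural energy grading on $\hat{\mathscr{H}}_c(\lambda)$ when the marked points $z_i$ are non-zero, since $T_{\vec{z}}$ shifts energies; so one cannot reduce to finite-dimensional eigenspaces in one step. Following Teleman's approach in the untwisted case, the remedy is to define $d^*$ initially on finite-level elements where the Nakano identity applies pointwise, verify that $d^*$ is the formal adjoint of $d$ on this dense subspace, and then combine the positivity estimate above with the boundedness of $T_{\vec{z}}$ (Proposition \ref{prop_norm_T_est}) and of the differentials to extend everything to the $L^2$-completion. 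Once this framework is in place the quadratic-form estimate forces every harmonic $\fg^\sigma$-invariant in positive degree to vanish, completing the proof.
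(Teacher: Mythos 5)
Your proposal is correct and follows essentially the same route as the paper: apply the Nakano identity (Theorem \ref{newnakano}) to a $\fg^\sigma$-invariant harmonic element, use the positivity of $\Box=\partial_1\partial_1^*$ together with the bound $|\{T_{\vec{z}}A,A\}_0|\leq q\,d_{\vec{\mu}}(1+\epsilon)\|A\|_0^2$ from Proposition \ref{prop_norm_T_est}, and conclude that no nonzero invariant harmonic element exists when $d_{\vec{\mu}}<(c+2\check{h})/m$. The paper likewise delegates the Hodge-theoretic identification of cohomology with harmonic representatives to Teleman (his Proposition 3.3.1), exactly the step you flag as the remaining technical framework.
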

\begin{proof}
Let $A$ be a nonzero element in $[\wedge^q (L^-(\fg,\sigma)) \otimes \mathscr{H}_c(\lambda)\otimes V(\vec{\mu})_{\vec{z}}]^{\fg^\sigma} $  such that $\bar{\Box}A=0$. Then, by Theorem \ref{newnakano},   
\[0=\bar{\Box} A= \Box A+   \frac{q( c+2\check{h})  }{m} A  +T_{\vec{z}}A   .\]
This gives  
\[  - \{  T_{\vec{z}}A, A  \}_0 =    \{  \Box A, A \}_0+      \frac{q( c+2\check{h})  }{m} || A ||_0^2    ,  \]
which gives, since $\{  \Box A, A \}_0  \geq 0 $,  
\[ - \{  T_{\vec{z}}A, A  \}_0   \geq    \frac{q( c+2\check{h})  }{m}   || A ||_0^2. \]
But, by Proposition \ref{prop_norm_T_est},   
\[  \mid \{  T_{\vec{z}}A, A  \}_0 \mid  \leq     q d_{\vec{\mu}} (1+\epsilon )     || A ||_0^2.  \]
Thus, we get $   \frac{ c+2\check{h}}{m}  \leq  d_{\vec{\mu}} (1+\epsilon)   $.  By assumption, $d_{\vec{\mu}}< \frac{ c+2\check{h}}{m}$, which violates the last inequality for some small enough $\epsilon$. This proves that $A=0$. Thus, $\bar{\Box}$ has no nonzero kernel
in $ [\wedge^q (L^-(\fg,\sigma)) \otimes \mathscr{H}_c(\lambda)\otimes V(\vec{\mu})_{\vec{z}}]^{\fg^\sigma}$.  Rest of the proof of the theorem follows from \cite[Proposition 3.3.1]{Te}. 
\end{proof}

\begin{remark}
\label{remark_3.16}
{\rm (a) In fact, the above  Theorem \ref{thm3.15} remains true under no restrictions on $\vec{z}\in \bc^{s+1}$ (except that they are distinct) by following the same argument as that of the proof of \cite[Proposition 3.4.2]{Te}.

\vskip1ex
(b) Theorem \ref{thm3.15} extends \cite[Theorem 0 (c)]{Te} to the {\it twisted} situation.}
\end{remark}

Let $\sigma$ be an automorphism of $\fg$ of order $m$. Further, as in Notation \ref{notation}, let $\sigma$ act on $\mathbb{P}^1$ via 
$\sigma (z) = e^{\frac{2\pi i}{m}} z$  for $z\in \mathbb{P}^1$. Take $s+2$ points $(\infty, \vec{z})$ in $\mathbb{P}^1$,
 where $\vec{z} = (z_0=0, z_1, \dots, z_s)$, such that their $\sigma$ orbits are distinct. Let $\lambda\in D_{c, \sigma^{-1}}, \mu_0\in D_{c, \sigma}$ and $\mu_1, \dots, \mu_s\in D_c$ be attached successively to the points  $(\infty, \vec{z})$.  
Let $\Gamma$ be the cyclic group generated by $\sigma$, and let $\Gamma_z$ be the stabilizer group of $\Gamma$ at $z\in \mathbb{P}^1$.
 Let $\lambda^*$ (resp. $\mu_i^*$) denote the highest weight of the dual representation $V(\lambda)^*$ (resp. $V(\mu_i)^*$) of $\fg^\sigma$ (resp. $\fg^{\Gamma_{z_i}}$).  

\begin{lemma} \label{lemma2.17} Follow the notation as above.
\vskip1ex

(a) The space of vacua 
$$ \mathscr{V}_{\mathbb{P}^1, \Gamma}\left((\infty, \vec{z}), (\lambda, \mu_0, \mu_1, \dots, \mu_s) \right)^\dagger \simeq H^0\left(L^+(\fg,\sigma),  \hat{\mathscr{H}}_c(\lambda^*)\otimes V(\vec{\mu}^*)_{\vec{z}}\right)^{\fg^\sigma},$$
where $\vec{\mu}^*= (\mu_0^*, \mu_1^*, \dots , \mu_s^*)$.

\vskip1ex

(b) For any $\lambda\in D_{c, \sigma^{-1}}, \mu_0$ a dominant weight of $\fg^\sigma$ and  $\mu_1, \dots, \mu_s$
any dominant weights of $\fg$ (we do not require $\mu_0\in D_{c, \sigma}$ nor do we require  $\mu_1, \dots, \mu_s\in D_c$) and any $q\geq 0$,
$$ H^q\left(L^+(\fg,\sigma),  \hat{\mathscr{H}}_c(\lambda^*)\otimes V(\vec{\mu}^*)_{\vec{z}}\right)\simeq \left(H_q(L^+(\fg,\sigma),  \mathscr{H}_c(\lambda)_{\infty} \otimes V(\vec{\mu})_{\vec{z}})\right)^*$$
as $\fg^\sigma$-modules, where $\mathscr{H}_c(\lambda)_{\infty}$ is the highest weight integrable module of $\hat{L}(\fg, \sigma^{-1})$ attached to $\infty\in \mathbb{P}^1$.
\end{lemma}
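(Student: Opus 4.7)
The plan is to derive both parts from the standard duality between the Chevalley--Eilenberg chain and cochain complexes, combined with the definition of twisted conformal blocks in \cite{HK1}.

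For part (a), recall that $\mathscr{V}_{\mathbb{P}^1,\Gamma}^\dagger$ is by definition the subspace of the linear dual of $M := \mathscr{H}_c(\lambda)_\infty \otimes V(\vec{\mu})_{\vec{z}}$ annihilated by the diagonal action of $\fg(X^\circ)^\Gamma$, where $X^\circ = \mathbb{P}^1 \setminus \{\infty, z_0, \ldots, z_s\}$ acts on each tensor factor through its local Laurent expansion at the corresponding point. Using (\ref{neweqn2.5.1}) together with $V(\mu_i^*) \simeq V(\mu_i)^*$, first identify $N := \hat{\mathscr{H}}_c(\lambda^*) \otimes V(\vec{\mu}^*)_{\vec{z}}$ with the algebraic dual of $M$; the completion implicit in the hat is precisely what makes this identification hold for the infinite-dimensional factor $\mathscr{H}_c(\lambda)_\infty$. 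Next, a strong-residue argument on the $\Gamma$-curve decomposes $\fg(X^\circ)^\Gamma$ into the sum of its principal parts at the various marked points. After dualising, an element of $N$ lies in $\mathscr{V}^\dagger$ precisely when it is killed by the $L^+(\fg,\sigma)$-expansions at $z_0,\ldots,z_s$, by the $L^+(\fg,\sigma^{-1})$-expansion at $\infty$ (which is automatic from the highest-weight condition on $\mathscr{H}_c(\lambda)_\infty$ after one trades the action over under the pairing), and by the residual $\fg^\sigma = \fg^{\Gamma_{z_0}}$ acting as the constant term at $z_0 = 0$. This yields the desired identification with $H^0(L^+(\fg,\sigma), N)^{\fg^\sigma}$.

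For part (b), consider the Chevalley--Eilenberg complexes
\[
C_q := \wedge^q L^+(\fg,\sigma) \otimes_{\mathbb{C}} M, \qquad C^q := \mathrm{Hom}_{\mathbb{C}}\bigl(\wedge^q L^+(\fg,\sigma),\, N\bigr),
\]
computing $H_q(L^+(\fg,\sigma), M)$ and $H^q(L^+(\fg,\sigma), N)$ respectively. The evaluation pairing $\langle\,,\,\rangle : M \otimes N \to \mathbb{C}$ from (a) induces
\[
C_q \otimes C^q \to \mathbb{C}, \qquad (\omega \otimes m) \otimes \phi \ \longmapsto\ \langle m, \phi(\omega)\rangle,
\]
which identifies $C^q$ with the full algebraic dual of $C_q$. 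A direct computation verifies that the chain and cochain differentials are mutually adjoint under this pairing; this is the standard contravariance check using the defining formulas for $\partial$ and $d$, together with the fact that the $L^+(\fg,\sigma)$-action on $N$ is contragredient to that on $M$. Passing to (co)homology yields $H^q(L^+(\fg,\sigma), N) \simeq H_q(L^+(\fg,\sigma), M)^*$, and the $\fg^\sigma$-equivariance is inherited from the fact that $\fg^\sigma$ normalises $L^+(\fg,\sigma)$ inside $(\fg[t])^\sigma$ and acts compatibly on $M$ and $N$.

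The main obstacle is the bookkeeping in part (a): one must carefully track the coordinate change $t \mapsto 1/t$ at $\infty$, which interchanges $\sigma$ with $\sigma^{-1}$ and exchanges highest-weight with lowest-weight pictures. The isomorphism (\ref{neweqn2.5.1}) supplies precisely the bridge needed for this identification, so once it is invoked correctly the rest of the argument reduces to a formal manipulation of residues and dualities.
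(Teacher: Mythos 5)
Your part (b) is essentially the paper's argument: the paper cites the standard duality between Lie algebra homology with coefficients in $M$ and cohomology with coefficients in its full dual (\cite[Lemmas 3.1.9 and 3.1.13]{Kbook}) together with the identification (\ref{neweqn2.5.1}) and the observation that $\mathscr{H}^{\low}_c(\lambda)=\mathscr{H}_c(\lambda)_\infty$ under $t\mapsto t^{-1}$; you spell out the adjointness of the differentials by hand, which is fine, and your remark that the finite-dimensionality of the $V(\mu_i)$ is what makes $N$ the full dual of $M$ is the right point to make.

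Part (a), however, has a genuine gap. You take as the \emph{definition} of $\mathscr{V}_{\mathbb{P}^1,\Gamma}^\dagger$ the space of functionals on $\mathscr{H}_c(\lambda)_\infty\otimes V(\vec{\mu})_{\vec{z}}$ annihilated by $\fg(X^\circ)^\Gamma$ with $X^\circ=\mathbb{P}^1\setminus\{\infty,z_0,\dots,z_s\}$. That is not the definition in \cite{HK1}: there the space of vacua attaches the full integrable highest weight module $\mathscr{H}_c(\mu_i)$ of the relevant twisted affine algebra to \emph{every} marked point, and the gauge algebra has poles at all of them. (Note also that an element of $\fg(X^\circ)^\Gamma$ with a pole at $z_i$ cannot act on the evaluation module $V(\mu_i)_{z_i}$ at all, so the description you start from is not even internally consistent.) The passage from that definition to the statement with finite-dimensional $V(\mu_i)$'s at $z_0,\dots,z_s$ and the gauge algebra $\left(\fg\otimes\mathbb{C}[\mathbb{P}^1\setminus\infty]\right)^\Gamma=\fg[t]^\sigma$ is precisely the content of the Propagation of Vacua theorem \cite[Theorem 4.3]{HK1}, which is the one substantive input the paper's proof of (a) uses; your ``strong residue'' sketch does not substitute for it. Relatedly, your claim that the vanishing condition coming from the expansion at $\infty$ is ``automatic from the highest-weight condition'' is unjustified: the conditions at the various points cannot be separated term by term without exactly this kind of theorem. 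Once Propagation of Vacua is invoked, the remaining identification with $H^0(L^+(\fg,\sigma),N)^{\fg^\sigma}$ is immediate from $\fg[t]^\sigma=\fg^\sigma\oplus L^+(\fg,\sigma)$ and (\ref{neweqn2.5.1}), as in the paper.
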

\begin{proof} 

(a) 
To prove the (a)-part, by the Propagation of Vacua (cf. \cite[Theorem 4.3]{HK1}), 
\begin{align*} 
 \mathscr{V}_{\mathbb{P}^1, \sigma}\left((\infty, \vec{z}), (\lambda, \mu_0, \mu_1, \dots, \mu_s) \right)^\dagger &= \left[\mathscr{H}_c(\lambda)^*_\infty\otimes V(\mu_0)^*_{{z}_0}\otimes \dots \otimes  V(\mu_s)^*_{{z}_s}\right]^{\fg\otimes \mathbb{C}[\mathbb{P}^1\setminus \infty]}\\
 &\simeq   \left[\hat{\mathscr{H}}_c(\lambda^{*})\otimes V(\mu_0^{*})_{z_0}\otimes V(\mu_1^*)_{z_1} \otimes \dots \otimes 
 V(\mu_s^*)_{z_s} 
 \right]^{\fg[t]^\sigma}, \,\,\text{by using (\ref{neweqn2.5.1})}\\
 & =H^0\left(L^+(\fg,\sigma),  \hat{\mathscr{H}}_c(\lambda^{*})\otimes V(\vec{\mu}^*)_{\vec{z}}\right)^{\fg^\sigma}.
 \end{align*} 
 This proves (a)
 
 \vskip1ex
 
 (b) By \cite[Lemmas 3.1.9 and 3.1.13]{Kbook} and the identity (\ref{neweqn2.5.1}), 
 \begin{equation} \label{eqn2.17.1}
 H^q\left(L^+(\fg,\sigma),  \hat{\mathscr{H}}_c(\lambda^{*})\otimes V(\vec{\mu}^*)_{\vec{z}}\right)\simeq  \left(H_q\left(L^+(\fg,\sigma),  \mathscr{H}^{\low}_c(\lambda)\otimes V(\vec{\mu})_{\vec{z}}\right)\right)^*,\,\,\text{as $\fg^\sigma$-modules}.
 \end{equation}
 Observe that $ \mathscr{H}^{\low}_c(\lambda)$ is exactly $\mathscr{H}_c(\lambda)_\infty$ with respect to the action of $\hat{L}(\fg,\sigma^{-1})$, where $L^+(\fg, \sigma)=(t\fg[t])^\sigma$ is naturally the negative part of $\hat{L}(\fg,\sigma^{-1})$ with $t_\infty=t^{-1}$ as the coordinate at $\infty$.  This conclude the proof of part (b) of the lemma. 
 
  \end{proof}

  
   
  We now assume $\sigma$ is a special automorphism on $\fg$.  
  \begin{corollary} \label{coro3.18}  Under the assumptions of Theorem \ref{thm3.15}, for any $q\geq 1$ we have
  \[ H_q(L^-(\fg,\sigma^{-1}),  \mathscr{H}_c(\lambda^*)\otimes V(\vec{\mu}^*)_{\vec{z}^{-1} } )^{\fg^\sigma}   =0, \quad \text{ if }\, d_{\vec{\mu}}< \frac{ c+2\check{h}}{m},\]
where $\vec{z}^{-1} =(\infty, z_1^{-1}, \cdots, z_s^{-1})$, 
$\mathscr{H}_c(\lambda^*)$ is the highest weight module of $\hat{L}(\fg,\sigma^{-1})$,
and $V(\vec{\mu}^*)_{\vec{z}^{-1} }:=V(\mu_0^*)_{\infty}\otimes \cdots  \otimes V(\mu_s^*)_{z_s^{-1}} $ represents the evaluation representation of $L^-(\fg,\sigma^{-1})$ in the usual sense.  
  \end{corollary}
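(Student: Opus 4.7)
The plan is to formally combine three ingredients already assembled in this section: Theorem~\ref{thm3.15}, the cohomology/homology duality of Lemma~\ref{lemma2.17}(b), and the coordinate inversion $t\mapsto t^{-1}$ on $\mathbb{P}^1$ (which identifies $L^+(\fg,\sigma)$ with $L^-(\fg,\sigma^{-1})$).

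First I would apply Lemma~\ref{lemma2.17}(b) after the substitution $(\lambda,\vec\mu)\mapsto(\lambda^*,\vec\mu^*)$. Since $\lambda^*\in D_{c,\sigma^{-1}}$, $(\lambda^*)^*=\lambda$, and $(\mu_i^*)^*=\mu_i$, this yields a $\fg^\sigma$-equivariant isomorphism
\[
H^q\bigl(L^+(\fg,\sigma),\,\hat{\mathscr H}_c(\lambda)\otimes V(\vec\mu)_{\vec z}\bigr)
\;\simeq\;
\bigl(H_q\bigl(L^+(\fg,\sigma),\,\mathscr H_c(\lambda^*)_\infty\otimes V(\vec\mu^*)_{\vec z}\bigr)\bigr)^{\!*}.
\]
Next, as noted in the proof of Lemma~\ref{lemma2.17}(b), writing $t_\infty=t^{-1}$ at $\infty\in\mathbb{P}^1$ identifies $L^+(\fg,\sigma)=(t\fg[t])^\sigma$ with $L^-(\fg,\sigma^{-1})$ inside $\hat L(\fg,\sigma^{-1})$. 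Under this identification $\mathscr H_c(\lambda^*)_\infty$ becomes the standard highest-weight module $\mathscr H_c(\lambda^*)$ of $\hat L(\fg,\sigma^{-1})$, and each evaluation module $V(\mu_i^*)_{z_i}$ of $L^+(\fg,\sigma)$ becomes the usual evaluation module $V(\mu_i^*)_{z_i^{-1}}$ of $L^-(\fg,\sigma^{-1})$: the rule $x[t^n]\cdot v=z_i^n x\cdot v$ rewrites as $x[t_\infty^{-n}]\cdot v=(z_i^{-1})^{-n}x\cdot v$. Chaining this with the previous isomorphism yields
\[
H^q\bigl(L^+(\fg,\sigma),\,\hat{\mathscr H}_c(\lambda)\otimes V(\vec\mu)_{\vec z}\bigr)
\;\simeq\;
\bigl(H_q\bigl(L^-(\fg,\sigma^{-1}),\,\mathscr H_c(\lambda^*)\otimes V(\vec\mu^*)_{\vec{z}^{-1}}\bigr)\bigr)^{\!*}.
\]

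Finally, I would apply Theorem~\ref{thm3.15} to the left-hand side with the original data $(\lambda,\vec\mu)$: the hypotheses $d_{\vec\mu}<(c+2\check h)/m$ and far-apart $\vec z$ give vanishing of its $\fg^\sigma$-invariants for $q\geq 1$. Because $\fg^\sigma$ is reductive and acts locally finitely on all modules in sight---the energy grading breaks $\mathscr H_c(\lambda^*)$ into finite-dimensional $\fg^\sigma$-blocks and each $V(\mu_i^*)$ is already finite-dimensional---the $\fg^\sigma$-invariants of the right-hand-side homology decompose as a direct sum of finite-dimensional pieces, so the invariants functor commutes with dualization. Vanishing of invariants on the dual therefore transfers to vanishing of invariants on the homology itself, giving the corollary.

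The only genuine obstacle is the bookkeeping in the coordinate-inversion step: one must carefully verify that the conventions of Notation~\ref{notation}, where $L^+(\fg,\sigma)$ acts on $V(\mu_i)_{z_i}$ via $z_i^n$ (reserving the complex conjugate $\bar z_i^n$ for the $\fg[t^{-1}]^\sigma$-action), are compatible with the ``usual'' evaluation convention of $L^-(\fg,\sigma^{-1})$ appearing in the corollary, and that the $\fg^\sigma$-equivariance of every identification is preserved throughout. No new analytic input beyond what is already developed is required.
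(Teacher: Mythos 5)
Your proposal is correct and follows essentially the same route as the paper: Lemma \ref{lemma2.17}(b) combined with the coordinate inversion $t\mapsto t^{-1}$ identifying $L^+(\fg,\sigma)$ with $L^-(\fg,\sigma^{-1})$ (and sending each evaluation point $z_i$ to $z_i^{-1}$), followed by an application of Theorem \ref{thm3.15}. Your explicit observation that taking $\fg^\sigma$-invariants commutes with dualization (via reductivity and local finiteness of the modules involved) spells out a step the paper leaves implicit.
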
    
  \begin{proof}
  
  Observe that there is an isomorphism of Lie algebras $\gamma: L^-\fg,\sigma)\to L^+(\fg, \sigma^{-1})$ given by $x\otimes f(t)\mapsto x\otimes f(t^{-1})$. Note that in this isomorphism,  by convention on LHS $\sigma(t)=e^{\frac{-2\pi i}{m}}t$ and on RHS $\sigma(t)= e^{\frac{2\pi i}{m}}t $.  With respect to the isomorphism $\gamma$, in part (b) of Lemma \ref{lemma2.17},  the action of $L^+(\fg,\sigma)$ on $\mathscr{H}_c(\lambda^*)_\infty$ corresponds to the action of $L^-(\fg,\sigma^{-1})$ on $\mathscr{H}_c(\lambda^*)$ as a highest weight module of $\hat{L}(\fg, \sigma^{-1})$;  the evaluation representation $V(\mu_i)_{z_i}$ of $L^+(\fg, \sigma)$ corresponds to the evaluation representation $V(\mu_i)_{z_i^{-1}}$ of $L^-(\fg, \sigma)$.  Then, from part (b) of Lemma \ref{lemma2.17}, we get 
  $$ H^q\left(L^+(\fg,\sigma),  \hat{\mathscr{H}}_c(\lambda^*)\otimes V(\vec{\mu}^*)_{\vec{z}}\right)\simeq \left(H_q(L^-(\fg,\sigma^{-1}),  \mathscr{H}_c(\lambda) \otimes V(\vec{\mu})_{\vec{z}^{-1}})\right)^*. $$
Finally, our corollary follows from Theorem \ref{thm3.15} and Remark \ref{remark_3.16} (a).

  \end{proof}
  
  \begin{remark}
  Corollary \ref{coro3.18} verifies the conjecture in \cite[Conjecture 5.6]{HK2} under some strong constraints. 
  \end{remark}

  \section{Laplacian Computation}\label{section3}

{\it Throughout this section $\sigma$ is any automorphism of $\fg$ of order $m$ except in the last part of subsection \ref{nakanoproof}, where we restrict $\sigma$ to be a special automorphism. We abbreviate $\hat{\mathcal{B}}_{\sigma}$ (resp. $\hat{\mathcal{B}}_{\sigma}^q$) by $\hat{\mathcal{B}}$ (resp. 
$\hat{\mathcal{B}}^q$).}

\vskip1ex

We follow the notation from $\S$\ref{cochain} and first calculate the differential $d^q: C^q\to C^{q+1}$ restricted to $\wedge ^q \left(L^-(\fg, \sigma)\right) \otimes \mathscr{H}_c(\lambda)\otimes V(\vec{\mu})_{\vec{z}}$ with image inside 
${\wedge}^{q+1} \left(\bar{L}^-(\fg, \sigma)\right) \otimes \mathscr{H}_c(\lambda)\otimes V(\vec{\mu})_{\vec{z}}$.

\begin{lemma}\label{lem7.1}
For  $\vec{x}(-\vec{p})\in \wedge^q\left(L^-(\fg, \sigma)\right) $ and $v\in \mathscr{H}_c(\lambda)\otimes V(\vec{\mu})_{\vec{z}}$,
$$
d^q \left( \vec{x}(-\vec{p})\otimes v\right) =\frac{1}{2}\sum_{b(-k)\in\hat{\mathcal{B}}} b(-k)\wedge \left( \overline{ad}_{\check{b}(k)} \vec{x}(-\vec{p})\right)\otimes v
+\sum_{b(-k)\in\hat{\mathcal{B}}} b(-k)\wedge \vec{x}(-\vec{p})\otimes \check{b}(k)\cdot v,$$
where, as before,  $\overline{ad}$ is the projection of $ad$ to $L^-(\fg,\sigma)$.
\end{lemma}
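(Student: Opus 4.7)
The plan is to transport the standard Lie-algebra cochain differential
\[ (d^q f)(y_0,\ldots,y_q) = \sum_{i=0}^q (-1)^i y_i \cdot f(y_0,\ldots,\hat{y}_i,\ldots,y_q) + \sum_{i<j}(-1)^{i+j} f([y_i,y_j],y_0,\ldots,\hat{y}_i,\ldots,\hat{y}_j,\ldots,y_q) \]
along the embedding $\xi^{\otimes q}\otimes \Iid:\wedge^q L^-(\fg,\sigma)\otimes \mathscr{H}_c(\lambda)\otimes V(\vec{\mu})_{\vec{z}}\hookrightarrow C^q$, and match the result with the image of the proposed right-hand side. Since the pairing $\langle\,,\,\rangle$ between $L^-(\fg,\sigma)$ and $L^+(\fg,\sigma)$ is nondegenerate, $\xi^{\otimes q}$ is injective, so it suffices to evaluate both sides on a test tuple $\vec{y}=c_0(n_0)\wedge\cdots\wedge c_q(n_q)\in \wedge^{q+1}L^+(\fg,\sigma)$ and compare.

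For the action sum, I use the completeness relation $\sum_{b\in\mathcal{B},\,[b]=-\underline{n}}\langle b,c\rangle \check{b}=c$ inside $\fg_{\underline{n}}$ (a direct consequence of the fact that $\{\check{b}\}$ is dual to $\{b\}$ with respect to $\langle\,,\,\rangle$) to rewrite $c_i(n_i)\cdot v=\sum_{b(-k)\in\hB}\langle b(-k),c_i(n_i)\rangle\, \check{b}(k)\cdot v$. Absorbing the sign $(-1)^i$ into a reordering of wedge factors identifies $\sum_i (-1)^i y_i\cdot f(\dots\hat{y}_i\dots)$ with the $\xi^{\otimes(q+1)}\otimes\Iid$-image of $\sum_{b(-k)\in\hB} b(-k)\wedge\vec{x}(-\vec{p})\otimes \check{b}(k)\cdot v$ evaluated on $\vec{y}$.

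For the bracket sum, I use invariance of $\langle\,,\,\rangle$ on $\hat{L}(\fg,\sigma)$ to write $\langle x_\ell(-p_\ell),[y_i,y_j]\rangle = -\langle [y_i,x_\ell(-p_\ell)],y_j\rangle$. Although $[y_i,x_\ell(-p_\ell)]$ in general has components in all of $L^+$, $\fg^\sigma$, $L^-$, and $\bbC C$, each of the first three pairs trivially with $y_j\in L^+$ by direct residue computation, and the central component pairs trivially with $L^+$ by definition of the form; hence only the $L^-$-projection $\overline{ad}_{y_i}(x_\ell(-p_\ell))$ contributes. Applying the completeness relation once more on $y_i$ and extending the ordered sum $\sum_{i<j}$ to the symmetric sum $\tfrac{1}{2}\sum_{i\ne j}$ (using $[y_i,y_j]=-[y_j,y_i]$ together with invariance, which makes the $(i,j)$ and $(j,i)$ contributions equal) converts the bracket contribution into the $\xi^{\otimes(q+1)}\otimes\Iid$-image of $\tfrac{1}{2}\sum_{b(-k)\in\hB} b(-k)\wedge \overline{ad}_{\check{b}(k)}\vec{x}(-\vec{p})\otimes v$. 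The factor $\tfrac{1}{2}$ is precisely the ordered-to-unordered correction.

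The main technical obstacle is sign bookkeeping: the $(-1)^{i+j}$ from the cochain formula, the signs from reordering wedge factors, and the antisymmetry of the bracket must conspire to produce the clean coefficients in the statement. A brief side check is that the central-extension corrections vanish: the central piece of $[y_i,y_j]$ is a multiple of $\Res_{t=0}(n_i t^{n_i+n_j-1}dt)$ with $n_i,n_j>0$, so the residue is zero; and the central piece of $[y_i,x_\ell(-p_\ell)]$ pairs to zero with $y_j\in L^+$ since $\langle C,L^+\rangle=0$. Finally, the appearance of the completion $\bar{L}^-(\fg,\sigma)$ in the target is explained by the fact that the outer sum over $\hB$ ranges over all $k\ge 1$ (hence infinite), although each inner sum over $b\in\mathcal{B}$ of fixed degree is finite.
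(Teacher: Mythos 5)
Your proposal is correct and follows essentially the same route as the paper: split $d^q$ into the coefficient-action part and the bracket part, evaluate against test elements of $\wedge^{q+1}L^+(\fg,\sigma)$, and use the invariance of $\langle\,,\,\rangle$ (Lemma \ref{lem_herm2}) together with dual bases to see that only the $L^-(\fg,\sigma)$-projection of the adjoint action survives, the $\tfrac12$ arising from symmetrizing the sum over $i<j$. (One wording slip: you say the ``first three'' of the components $L^+,\fg^\sigma,L^-,\bbC C$ pair trivially with $L^+$, but you clearly mean that all components except the $L^-$-projection do, which is what your conclusion uses.)
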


\begin{proof} Let $d^q_1$ be the same differential $d^q$ but with the trivial action of $L^+(\fg, \sigma)$ on 
 $\mathscr{H}_c(\lambda)\otimes V(\vec{\mu})_{\vec{z}}$. Then, 
\begin{align*}
d_1^q &\left(\vec{x}(-\vec{p})\otimes v \right) \left(\check{b}_1(k_1)\wedge \dots \wedge \check{b}_{q+1}(k_{q+1})\right)\\
&=\left(\vec{x}(-\vec{p})\otimes v \right) \left( \frac{1}{2}\sum_{i=1}^{q+1} (-1)^i \ad_{\check{b}_i(k_i)}\cdot \left(
\check{b}_1(k_1)\wedge \dots \wedge\widehat{\check{b}_i(k_i)}\wedge \dots \wedge \check{b}_{q+1}(k_{q+1})\right)\right).
\end{align*}
Thus,
\begin{align}
d_1^q &
\left(\vec{x}(-\vec{p})\otimes v \right)=\frac{1}{2}\sum_{i; \vec{b}(\vec{k})\in\hat{\mathcal{B}}^{q+1}} (-1)^{i+1}\left\langle \vec{x} (-\vec{p}),ad_{\check{b}_i(k_i)}\left(
\check{b}_1(k_1)\wedge \dots \wedge\widehat{\check{b}_i(k_i)}\wedge \dots \wedge \check{b}_{q+1}(k_{q+1})\right)\right\rangle\notag\\
&\quad \quad\quad\quad\quad b_1 (-k_1)\wedge \cdots \wedge b_{q+1}(-k_{q+1})\otimes v\notag\\
&=\frac{1}{(q+1!)2} \sum_{i; b_j(-k_j)\in\hat{\mathcal{B}}}\,
\left \langle \left( ad_{\check{b}_i(k_i)}\vec{x}(-\vec{p})\right) , \check{b}_1(k_1) \wedge \cdots \wedge \widehat{\check{b}_i(k_i)}\wedge \cdots \wedge \check{b}_{q+1} (k_{q+1})\right\rangle\notag \\
&\qquad b_{i}(-k_i)\wedge b_{1}(-k_1)\wedge \cdots \wedge \widehat{b_{i}(-k_i)}\wedge \cdots \wedge b_{q+1}(-k_{q+1}) \otimes v,\,\,
\text{ by Lemma \ref{lem_herm2}}\notag\\
&=\frac{1}{2}\sum_{b(-k)\in\hat{\mathcal{B}}}b(-k)\wedge \left( \overline{ad}_{\check{b}(k)}\vec{x}(-\vec{p})\right)\otimes v\label{eqn7.11}
\end{align}
Further,
\begin{align*}
\left(d^q-d_1^q\right) \left(\vec{x}(-\vec{p})\otimes v\right)
& \left(\check{b}_1(k_1)\wedge \dots \wedge \check{b}_{q+1}(k_{q+1})\right)
\\
&=\sum_{i=1}^{q+1}\,(-1)^{i+1}\left\langle \vec{x}(-\vec{p}), 
\check{b}_1(k_1)\wedge \dots \wedge\widehat{\check{b}_i(-k_i)}\wedge \dots \wedge \check{b}_{q+1}(k_{q+1})
\right\rangle \check{b}_{i}(k_i)\cdot v
\end{align*}
Thus,
\begin{align}
\left(d^q-d_1^q\right)&\left(\vec{x}(-\vec{p})\otimes v \right)
 =\notag\\
&\sum_{i; \vec{b}(\vec{k})\in\hat{\mathcal{B}}^{q+1}} (-1)^{i} \left \langle \vec{x} (-\vec{p}),\check{b}_1(k_1)\wedge \cdots \wedge \widehat{\check{b}_i(k_i)}\right.
\left.\wedge \cdots \check{b}_{q+1} (k_{q+1})\right\rangle\notag\\
&\qquad \cdot b_1(-k_1)\wedge \cdots \wedge b_i(-k_i)\wedge \cdots \wedge 
b_{q+1}(-k_{q+1})\otimes \left(\check{b}_i(k_i) \cdot v\right)\notag\\
&=\sum_{i; b_i(-k_i)\in\hat{\mathcal{B}}}\,\frac{q!}{(q+1)!}b_i(-k_i)\wedge \vec{x}(-\vec{p})\otimes\check{b}_i(k_i)\cdot v\notag\\
&=\sum_{b(-k)\in\hat{\mathcal{B}}} b(-k)\wedge \vec{x}(-\vec{p})\otimes \left( \check{b}(k)\cdot v\right)\label{eqn7.12}
\end{align}
Combining the equations \eqref{eqn7.11} and \eqref{eqn7.12}, we get the lemma. 
\end{proof}

\begin{lemma}\label{lem6.2.3}
For any $Y\in \wedge ^{q+1}\left(L^-(\fg, \sigma)\right)$, $Z\in \wedge ^{q}\left(L^-(\fg, \sigma)\right)$ and $x\in L^-(\fg, \sigma)$,
$$
\left\{ i_x Y,Z\right\} = \left\{ Y, x\wedge Z\right\},
$$
where $i_xY$ is the graded derivation such taht 
$$
i_x(x')=\left\{ x',x\right\}, \mbox{ for } x'\in L^-(\fg, \sigma).
$$
\end{lemma}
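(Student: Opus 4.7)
The plan is to reduce to decomposable elements by multilinearity of both sides in $Y$ and $Z$. Write $Y = y_0 \wedge y_1 \wedge \cdots \wedge y_q$ and $Z = z_1 \wedge \cdots \wedge z_q$ with all $y_i, z_k \in L^-(\fg, \sigma)$. By the graded derivation property defining $i_x$, together with $i_x(y) = \{y, x\}$ for $y \in L^-(\fg,\sigma)$, we have
$$ i_x Y \;=\; \sum_{i=0}^{q} (-1)^{i}\, \{y_i, x\}\, y_0 \wedge \cdots \wedge \widehat{y_i} \wedge \cdots \wedge y_q.$$

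Next, applying the definition of the Hermitian form on $\wedge^q(L^-(\fg,\sigma))$ as a determinant in the entries $\{y_j, z_k\}$, we obtain
$$ \{i_x Y, Z\} \;=\; \sum_{i=0}^{q} (-1)^{i}\, \{y_i, x\}\, \det\bigl(\{y_j, z_k\}\bigr)_{j \neq i,\, 1 \leq k \leq q}.$$

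On the other hand, $\{Y, x \wedge Z\}$ is by definition the determinant of the $(q+1)\times (q+1)$ matrix whose $(i,0)$ entry is $\{y_i, x\}$ (for $i=0,\ldots,q$) and whose $(i,k)$ entry is $\{y_i, z_k\}$ (for $k=1,\ldots,q$). The Laplace expansion of this determinant along the $0$-th column yields exactly the same expression as above, so the two sides are equal.

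I expect no real obstacle: the identity is the standard adjointness between interior product and exterior multiplication with respect to the induced Hermitian form, and the entire content reduces to the Laplace expansion of a determinant along one column. The only care needed is matching the sign convention in the definition of $i_x$ as a graded derivation with the sign pattern in the cofactor expansion, which works out correctly with the chosen indexing.
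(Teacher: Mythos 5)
Your proof is correct and follows essentially the same route as the paper's: both reduce to decomposable elements, expand $i_xY$ via the graded derivation property into a signed sum over omitted factors, and identify the result with the Laplace expansion of the $(q+1)\times(q+1)$ determinant $\{Y, x\wedge Z\}$ along the column corresponding to $x$. The only difference is cosmetic indexing ($0,\dots,q$ versus $1,\dots,q+1$), which does not affect the sign bookkeeping.
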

\begin{proof}
We can assume that $Y=y_1\wedge \cdots \wedge y_{q+1},y_i\in L^-(\fg, \sigma)$ and $Z=z_1\wedge \cdots \wedge z_q$. Then,
\begin{equation}\label{eqn7.1}
\left\{ i_x Y,Z\right\} =\sum_{p=1}^{q+1}(-1)^{p-1}\left\{y_p,x\right\} \det \left( \left\{ \overline{y_i}, z_j\right\} _{1\leq i,j\leq q}\right),
\end{equation}
where 
\begin{align*} \overline{y_i}&=y_i\,\,\text{ for $i<p$, and }\\
&= y_{i+1} \,\,\mbox{for } i\geq p.
\end{align*}

Similarly,
\begin{equation}\label{eqn7.2}
\left\{Y,x\wedge Z\right\}=\det 
\left(
\begin{array}{llll}
\left\{y_1,x \right\},&\left\{y_1,z_1 \right\},& \ldots ,&\left\{y_1,z_{q} \right\}\\
\left\{y_2,x \right\},&\left\{y_2,z_1 \right\},& \ldots ,&\left\{y_2,z_{q} \right\}\\
\vdots&&& \vdots \\
\left\{y_{q+1},x \right\},&\left\{y_{q+1},z_1 \right\}, &\ldots ,&\left\{y_{q+1},z_{q} \right\}
\end{array}
\right).
\end{equation}
Combining the equations \eqref{eqn7.1}  and \eqref{eqn7.2}. we get the lemma.
\end{proof}

\begin{lemma}\label{lem6.2.4}
For $x\in L^-(\fg, \sigma)$ and $y\in \hat{L}(\fg,\sigma)$, 
$$i_x({\overline{ad}}^\ell_y)=({\overline{ad}}^\ell_y)i_x+i^\ell_{[x,\kappa (y)]_-},\,\,\,\text{ for $1\leq \ell\leq q$,
 acting on $\wedge^q(L^-(\fg,\sigma))$}, $$
 where $i^\ell$  is the contraction action only on the $\ell$-th factor with sign $(-1)^{\ell-1}$ and $[\,\,]_-$ denotes the projection onto $L^-(\fg, \sigma)$.
\end{lemma}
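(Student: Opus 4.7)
The plan is to compute both sides on a decomposable element $\vec{w}=w_1\wedge\cdots\wedge w_q$, with all $w_i\in L^-(\fg,\sigma)$, and compare term by term. The key point is that $\overline{ad}^\ell_y$ is a single-slot operator (it replaces $w_\ell$ by $[y,w_\ell]_-$ and leaves every other factor untouched), whereas $i_x$ is a graded derivation of degree $-1$. First I would write
\[
\overline{ad}^\ell_y(\vec{w}) = w_1\wedge\cdots\wedge [y,w_\ell]_-\wedge\cdots\wedge w_q,
\qquad i_x(\vec{w})=\sum_{i=1}^q(-1)^{i-1}\{w_i,x\}\,w_1\wedge\cdots\widehat{w_i}\cdots\wedge w_q.
\]
Applying $i_x$ to the first expression and $\overline{ad}^\ell_y$ to the second, every contraction index $i\neq\ell$ produces identical contributions on both sides (the factor $\{w_i,x\}$ is unaffected by replacing $w_\ell$ with $[y,w_\ell]_-$, and vice versa), so these cancel in the difference $i_x\overline{ad}^\ell_y-\overline{ad}^\ell_y i_x$.

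What remains is exactly the $i=\ell$ contribution coming from $i_x\overline{ad}^\ell_y$, namely
\[
(-1)^{\ell-1}\bigl\{[y,w_\ell]_-,\,x\bigr\}\,w_1\wedge\cdots\widehat{w_\ell}\cdots\wedge w_q,
\]
while $\overline{ad}^\ell_y i_x$ contributes nothing new at $i=\ell$ since the $\ell$-th factor has been contracted away. The last step is to identify this residue with $i^\ell_{[x,\kappa(y)]_-}(\vec{w})$. For this I would use two facts already established: (i) under the Hermitian form $\{\,,\,\}$ on $\hat{L}^f(\fg,\sigma)$ the subspaces $L^+(\fg,\sigma)$, $L^-(\fg,\sigma)$, and $\mathbb{C}C$ are mutually orthogonal, so the projection $[\,\,]_-$ may be inserted or removed freely when pairing against elements of $L^-$; and (ii) the contravariance property from Lemma~\ref{lem_herm2}, which gives $\{ad_y(w_\ell),x\}=-\{w_\ell,ad_{\kappa(y)}(x)\}=\{w_\ell,[x,\kappa(y)]\}$. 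Combining these,
\[
\bigl\{[y,w_\ell]_-,\,x\bigr\}=\bigl\{[y,w_\ell],\,x\bigr\}=\bigl\{w_\ell,\,[x,\kappa(y)]\bigr\}=\bigl\{w_\ell,\,[x,\kappa(y)]_-\bigr\},
\]
which is exactly the scalar appearing in $i^\ell_{[x,\kappa(y)]_-}(\vec{w})$.

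There is no real obstacle here; the lemma is essentially a bookkeeping identity. The only subtle point—and the one worth stating carefully—is that although $[y,w_\ell]$ may have components in $L^+(\fg,\sigma)$ or along $C$, those components are invisible to the inner product $\{\,\cdot\,,x\}$ because $x\in L^-(\fg,\sigma)$, which is precisely what allows one to commute the projection $[\,\,]_-$ past the Hermitian pairing. Once this is noted the identity is an immediate consequence of the graded Leibniz rule for $i_x$ and the contravariance of $\{\,,\,\}$.
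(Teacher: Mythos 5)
Your proof is correct and follows essentially the same route as the paper: expand both sides on a decomposable element, observe that the contractions in slots $j\neq\ell$ cancel in the commutator, and convert the residual scalar $\{[y,w_\ell],x\}$ into $\{w_\ell,[x,\kappa(y)]_-\}$ via the contravariance of Lemma~\ref{lem_herm2} together with the orthogonality of the graded pieces of $\hat{L}^f(\fg,\sigma)$. (The only cosmetic remark: besides $L^+(\fg,\sigma)$ and $\mathbb{C}C$, the bracket $[y,w_\ell]$ may also have a component in $\fg^\sigma\otimes t^0$, but that too is orthogonal to $x\in L^-(\fg,\sigma)$, so nothing changes.)
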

\begin{proof}
Take $Z=z_1\wedge \cdots \wedge z_q, z_i \in L^-(\fg, \sigma)$.
Then,
\begin{align*}
i_x ({\overline{ad}}^\ell_y)(Z)&= \sum_{j\neq \ell}(-1)^{j-1} z_1 \wedge \dots \wedge  \left\{ z_j,x\right\}\wedge \cdots \wedge \left[ y,z_\ell\right]_-\wedge \dots \wedge z_q \\
&+(-1)^{\ell-1}z_1\wedge \dots \wedge  \left\{ \left[ y,z_\ell\right],x\right\}\wedge \cdots \wedge z_q.
\end{align*}
Further,
\begin{equation}
\left({\overline{ad}}^\ell_y\right) i_x (Z) = \sum_{j\neq \ell}(-1)^{j-1} z_1 \wedge  \cdots \wedge \left\{ z_j,x\right\}\wedge \cdots \wedge [y,z_\ell]_-\wedge  \cdots \wedge z_q.
\end{equation}
By Lemma \ref{lem_herm2}, the lemma follows.
\end{proof}

We consider the Hermitian form $\{\,,\,\}_0$ on $ \wedge ^{*}\left(L^-(\fg,\sigma)\right)$ and the standard Hermitian forms 
on $\mathscr{H}_c(\lambda)$ and $ V_{\vec{z}}(\vec{\mu})$  as in $\S$\ref{cochain}.

 \begin{lemma} \label{lem7.4}For $\vec{x}(-\vec{p})= x_1(-p_1)\wedge \dots \wedge x_{q+1}(-p_{q+1}) \in \wedge ^{q+1}\left(L^-(\fg,\sigma)\right)$ and $ v \in \mathscr{H}_c(\lambda)\otimes V_{\vec{z}}(\vec{\mu}),$
\begin{align*}
{d}^*\left(\vec{x}(-\vec{p})\otimes v\right)=\frac{1}{2}\sum_{j; b(-k)\in\hat{\mathcal{B}}}\left(\frac{1}{p_j}+\frac{1}{k}\right) i_{b(-k)}\left(ad^j_{b(-k)} \vec{x}(-\vec{p})\right)\otimes v
+\sum_{b(-k)\in\hat{\mathcal{B}}}\frac{1}{k}\left(i_{b(-k)}\vec{x}(-\vec{p})\right)\otimes b (-k)\cdot v.
\end{align*}
\end{lemma}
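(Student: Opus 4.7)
The proof of Lemma \ref{lem7.4} is a direct computation of the formal adjoint of the differential $d^q$ identified in Lemma \ref{lem7.1}. Split $d^q = d_1^q + d_2^q$ with
\[
d_1^q(X\otimes u) = \tfrac12\sum_{b(-k)\in\hat{\mathcal{B}}} b(-k)\wedge\overline{ad}_{\check{b}(k)}X\otimes u,\qquad d_2^q(X\otimes u) = \sum_{b(-k)\in\hat{\mathcal{B}}} b(-k)\wedge X\otimes \check{b}(k)\cdot u;
\]
the two sums in the statement of Lemma \ref{lem7.4} will turn out to be $(d_1^q)^*$ and $(d_2^q)^*$ respectively.

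Two standard moves yield $(d_2^q)^*$. First, reproducing the proof of Lemma \ref{lem6.2.3} with $\{\,,\,\}_0$ in place of $\{\,,\,\}$, the adjoint of wedge multiplication $b(-k)\wedge\,\cdot\,$ is the contraction $i^0_{b(-k)}$ determined by $i^0_{b(-k)}(b'(-k'))=\{b'(-k'),b(-k)\}_0=\tfrac{1}{k}\delta_{b,b'}\delta_{k,k'}$, which as a graded derivation equals $\tfrac{1}{k}i_{b(-k)}$. Second, by Lemma \ref{lem_hem1} together with $\kappa^2=\mathrm{Id}$ and $\check{b}=-\kappa(b)$ (so $\kappa(\check{b}(k))=-b(-k)$), the module-theoretic adjoint of $\check{b}(k)\cdot$ is $b(-k)\cdot$. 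Combining these yields the second sum in the statement.

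The computation of $(d_1^q)^*$ is more delicate. Writing $d_1^q = \tfrac12\sum_{b(-k)}(b(-k)\wedge\,\cdot\,)\circ\overline{ad}_{\check{b}(k)}$ so that $(d_1^q)^* = \tfrac12\sum_{b(-k)}(\overline{ad}_{\check{b}(k)})^*\circ i^0_{b(-k)}$, the crux is the $\{\,,\,\}_0$-adjoint of $\overline{ad}_{\check{b}(k)}$ on $L^-(\fg,\sigma)$. Since $\overline{ad}_{\check{b}(k)}$ shifts degree from $-p$ to $-(p-k)$ and $\{\,,\,\}_0$ puts weight $\tfrac{1}{p}$ on the degree-$(-p)$ component, the contravariance of $\{\,,\,\}$ (Lemma \ref{lem_herm2}) together with $\kappa(\check{b}(k))=-b(-k)$ gives
\[
\{\overline{ad}_{\check{b}(k)}\,x(-p),\,y(-(p-k))\}_0 = \tfrac{p}{p-k}\{x(-p),\,\overline{ad}_{b(-k)}y(-(p-k))\}_0,
\]
i.e.\ $(\overline{ad}_{\check{b}(k)})^*y(-q)=\tfrac{q+k}{q}\,\overline{ad}_{b(-k)}y(-q)$. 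Because $\{\,,\,\}_0$ on $\wedge^{*}L^-(\fg,\sigma)$ is the determinant of the form on $L^-(\fg,\sigma)$, the adjoint of a derivation is again a (position-weighted) derivation, and on a wedge product with factor degrees $-p_1,\ldots,-p_q$ it acts as $\sum_j \tfrac{p_j+k}{p_j}\overline{ad}^j_{b(-k)}$. Composing with $\tfrac{1}{k}i_{b(-k)}$ yields the symmetric coefficient
\[
\tfrac{1}{k}\cdot\tfrac{p_j+k}{p_j} = \tfrac{1}{p_j}+\tfrac{1}{k}.
\]
Finally, Lemma \ref{lem6.2.4} applied with $x=y=b(-k)$ yields $\overline{ad}^j_{b(-k)}\circ i_{b(-k)} = i_{b(-k)}\circ \overline{ad}^j_{b(-k)}$, because $[b(-k),\kappa(b(-k))]_- = [b(-k),-\check{b}(k)]_- = 0$ (the bracket sits in $\fg_{\underline{0}}\oplus\bbC C$, which contributes nothing to $L^-$). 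Since on elements of $L^-$ one has $\overline{ad}_{b(-k)} = ad_{b(-k)}$, the first sum in the statement is read off.

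The main obstacle is the appearance of the non-trivial position-dependent factor $\tfrac{p+k}{p}$ in the $\{\,,\,\}_0$-adjoint of the derivation $\overline{ad}_{\check{b}(k)}$; this is forced by the degree weights in $\{\,,\,\}_0$ and is the source of the symmetric coefficient $\tfrac{1}{p_j}+\tfrac{1}{k}$ in the final formula, once it is combined with the $\tfrac{1}{k}$ coming from $i^0_{b(-k)}$.
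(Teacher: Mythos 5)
Your proof is correct and follows essentially the same route as the paper's: both rest on the same three adjointness facts (wedge versus contraction as in Lemma \ref{lem6.2.3}, contravariance of $\{\,,\,\}$ under $ad$ via Lemma \ref{lem_herm2} with $\kappa(\check{b}(k))=-b(-k)$, and Lemma \ref{lem_hem1} for the module action), with the degree weights of $\{\,,\,\}_0$ producing the coefficient $\tfrac{1}{p_j}+\tfrac{1}{k}$ and Lemma \ref{lem6.2.4} disposing of the commutator of $i_{b(-k)}$ with $ad^j_{b(-k)}$. The only difference is presentational: you compute the operator adjoints directly, whereas the paper expands $d^*(\vec{x}(-\vec{p})\otimes v)$ in the orthonormal basis $\vec{a}(-\vec{n})\otimes v_\phi$ and resums.
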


\begin{proof} As in $\S$\ref{chain}, $\{v_\phi\}$ is an orthonormal basis of $\mathscr{H}_c(\lambda)\otimes   V_{\vec{z}}(\vec{\mu})$. 
\begin{align*}
d^*\left( \vec{x}(-\vec{p})\otimes v\right)&=\sum_{v_\phi; \vec{a}(-\vec{n})\in\hat{\mathcal{B}}^q} \,\vert \vec{n}\vert \left\{ d^* \left(\vec{x}(-\vec{p})\otimes v\right), \vec{a}(-\vec{n})\otimes v_{\phi}\right\}
\vec{a}(-\vec{n})\otimes v_{\phi}\\
&=\sum \frac{\vert \vec{n}\vert}{\vert \vec{p}\vert} \left\{  \vec{x}(-\vec{p})\otimes v,  d\left(\vec{a}(-\vec{n})\otimes v_{\phi}\right)\right\}\vec{a}(-\vec{n})\otimes v_{\phi}\\
&=\frac{1}{2}\sum_{v_\phi; \vec{a}(-\vec{n})\in\hat{\mathcal{B}}^q; b(-k)\in\hat{\mathcal{B}}}\, \frac{\vert \vec{n}\vert}{\vert \vec{p}\vert} \left\{  \vec{x}(-\vec{p})\otimes v,b(-k)\wedge \left( \overline{ad}_{\check{b}(k)}\vec{a}(-\vec{n})\right)\otimes v_{\phi}\right\} \vec{a}(-\vec{n})\otimes v_{\phi}\\
&+\sum_{v_\phi; \vec{a}(-\vec{n})\in\hat{\mathcal{B}}^q; b(-k)\in\hat{\mathcal{B}}}\,
\frac{\vert \vec{n}\vert}{\vert \vec{p}\vert} \left\{  \vec{x}(-\vec{p})\otimes v,b(-k)\wedge\vec{a}(-\vec{n})\otimes \check{b}(k) \cdot v_{\phi}\right\}\vec{a}(-\vec{n})\otimes v_{\phi},\,\,\text{by Lemma \ref{lem7.1}}\\
&=\frac{1}{2}\sum \frac{\vert \vec{n}\vert}{\vert \vec{p}\vert} \left\{\left(i_{b(-k)} \vec{x} (-\vec{p})\right)\otimes v, \overline{ad}_{\check{b}(k)} \vec{a}(-\vec{n})\otimes v_{\phi}\right\}\vec{a}(-\vec{n})\otimes v_{\phi}\\
&+\sum \frac{\vert \vec{n}\vert}{\vert \vec{p}\vert} \left(\left\{i_{b(-k)} \vec{x} (-\vec{p})\right)\otimes v,  \vec{a}(-\vec{n})\otimes \check{b}(k)\cdot v_{\phi}\right\}\vec{a}(-\vec{n})\otimes v_{\phi}, \,\,\text{by Lemma \ref{lem6.2.3}}\\
&=\frac{1}{2}\sum \frac{\vert \vec{n}\vert}{\vert \vec{p}\vert} \left\{ ad_{b(-k)}i_{b(-k)}\vec{x}(-\vec{p})\otimes v, \vec{a}(-\vec{n})\otimes v_{\phi}\right\} \vec{a}(-\vec{n})\otimes v_{\phi}\\
&+\sum \frac{\vert \vec{n}\vert}{\vert \vec{p}\vert} \left\{i_{b(-k)} \vec{x} (-\vec{p}),  \vec{a}(-\vec{n})\right\} \vec{a}(-\vec{n})\otimes  \left\{ v , \check{b}(k)\cdot v_{\phi}\right\} v_{\phi},\,\,\text{by Lemma \ref{lem_herm2}}\\
&= \frac{1}{2}\sum_{j; b(-k)\in\hat{\mathcal{B}}}\left(\frac{1}{p_j}+\frac{1}{k}\right) i_{b(-k)}\left(ad^j_{b(-k)} \vec{x}(-\vec{p})\right)\otimes v\\
&+\sum_{b(-k)\in\hat{\mathcal{B}}}\frac{1}{k}\left(i_{b(-k)}\vec{x}(-\vec{p})\right)\otimes b (-k)\cdot v,
 \,\,\text{by Lemmas \ref{lem6.2.4} and \ref{lem_hem1}}.
 \end{align*}
 This proves the lemma. 
\end{proof}
Combining the Lemmas \ref{lem7.1} and \ref{lem7.4}, we get the following.

\begin{proposition}\label{prop7.5}
$$\overline{\Box} :=dd^*+d^*d: \wedge ^q\left(L^-\left( \fg,\sigma\right)\right)\otimes \mathscr{H}_c(\lambda) \otimes V_{\vec{z}}(\vec{\mu})\rightarrow  {\wedge}^q \left(\bar{L}^-\left( \fg,\sigma\right)\right)
\otimes \mathscr{H}_c(\lambda)\otimes V_{\vec{z}}(\vec{\mu})$$
 is given as follows.

For  $\vec{x}(-\vec{p})\in\wedge ^q\left(L^-\left( \fg,\sigma\right)\right)$ and $v = v_1\otimes v_2\in \mathscr{H}_c(\lambda) \otimes V_{\vec{z}}(\vec{\mu})$,
\begin{align}
\overline{\Box}&\left(\vec{x}(-\vec{p})\otimes v\right)= \notag\\
 &\frac{1}{4}\sum_{1\leq \ell\leq q; 1\leq j\leq q+1; b(-k), a(-n)\in\hat{\mathcal{B}}}\frac{p_j(\ell)+n}{np_j(\ell)}i_{a(-n)} ad^j_{a(-n)}
\left(b(-k)\wedge (\overline{ad}^\ell_{\check{b}(k)} \vec{x}(-\vec{p}))\right)\otimes v \label{eqn7.5.1}\\
&+\frac{1}{2} \sum_{b(-k), a(-n)\in\hat{\mathcal{B}}}
\frac{1}{n}i_{a(-n)}\left(b(-k)\wedge \left( \overline{ad}_{\check{b}(k)} \vec{x}(-\vec{p})\right)\right)\otimes a(-n)\cdot v \label{eqn7.5.2}\\
&-\frac{1}{2}\sum_{b(-k), a(-n)\in\hat{\mathcal{B}}}
\frac{k+n}{kn}\left[ a (-n), b (-k)\right] \wedge \left( i_{a(-n)}\cdot \vec{x}(-\vec{p})\right)\otimes \check{b}(k)\cdot v \label{eqn7.5.3}\\
 &+\frac{1}{2}\sum_{j>1; b(-k), a(-n)\in\hat{\mathcal{B}}}\frac{p_{j-1}+n}{np_{j-1}}i_{a(-n)} ad^j_{a(-n)} \left( b (-k) \wedge \vec{x}(-\vec{p})\right)\otimes \check{b}(k)\cdot v \label{eqn7.5.4}\\
&+\sum_{b(-k), a(-n)\in\hat{\mathcal{B}}}
\frac{1}{n}i_{a(-n)} \left( b (-k) \wedge \vec{x}(-\vec{p})\right)\otimes a (-n)\check{b}(k)\cdot v \label{eqn7.5.5}\\
&+\frac{1}{4}\sum_{j; b(-k), a(-n)\in\hat{\mathcal{B}}}
 \frac{p_j+n}{np_j} b(-k)\wedge \overline{ad}_{\check{b}(k)}\left( i_{{a}(-n)}ad^j_{a(-n)} \vec {x}(-\vec{p})\right)\otimes v \label{eqn7.5.6}\\
&+\frac{1}{2}\sum_{j; b(-k), a(-n)\in\hat{\mathcal{B}}}
 \frac{p_j+n}{np_j} b(-k)\wedge  i_{{a}(-n)}\left(ad^j_{a(-n)} \vec {x}(-\vec{p})\right)\otimes \check{b}(k)\cdot v \label{eqn7.5.7}\\
&+\frac{1}{2} \sum_{b(-k), a(-n)\in\hat{\mathcal{B}}}
\frac{1}{n}b(-k)\wedge  \left(\overline{ad}_{\check{b}(k)} i_{{a}(-n)}  \vec {x}(-\vec{p})\right)\otimes 
a (-n)\cdot v \label{eqn7.5.8}\\
&+\sum_{b(-k), a(-n)\in\hat{\mathcal{B}}}
\frac{1}{n} b(-k)\wedge  \left(i_{a(-n)}\vec{x}(-\vec{p})\right)\otimes \check{b}(k) a(-n)\cdot v,\label{eqn7.5.9}
\end{align}
where $p_1(\ell)=k$;  $p_j (\ell)=p_{j-1}$ for $j>1$ and $j\neq \ell+1$; 
$p_{\ell+1}(\ell)=p_\ell-k$ if $p_\ell>k$ otherwise $0$.
\end{proposition}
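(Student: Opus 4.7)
The plan is to obtain the displayed formula by direct composition of the two expressions for $d$ and $d^*$ given in Lemmas \ref{lem7.1} and \ref{lem7.4}. Each of $d$ and $d^*$ is a sum of two summands (a ``Lie'' part involving $\overline{ad}_{\check{b}(k)}$ or $i_{b(-k)}\,ad^j_{b(-k)}$ on the exterior factor, and a ``module'' part involving $\check{b}(k)$ or $b(-k)$ acting on $\mathscr{H}_c(\lambda)\otimes V_{\vec{z}}(\vec{\mu})$). Hence $d d^*$ expands into four cross-terms and $d^* d$ into four more, producing the eight groupings \eqref{eqn7.5.1}--\eqref{eqn7.5.9} (with \eqref{eqn7.5.3} absorbing a commutator coming from the cancellation discussed below).

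First I would apply $d^*$ to $\vec{x}(-\vec{p})\otimes v$ using Lemma \ref{lem7.4}, then hit the result with $d$ using Lemma \ref{lem7.1}: the ``Lie$\circ$Lie'' piece contributes \eqref{eqn7.5.6}, the ``Lie$\circ$module'' and ``module$\circ$Lie'' crosses contribute \eqref{eqn7.5.7} and \eqref{eqn7.5.8}, and the ``module$\circ$module'' piece yields \eqref{eqn7.5.9}. Symmetrically, applying $d$ first and then $d^*$ gives \eqref{eqn7.5.1}, \eqref{eqn7.5.2}, \eqref{eqn7.5.4} and \eqref{eqn7.5.5} from the four cross-combinations; the extra term \eqref{eqn7.5.3} arises from the commutator piece when one pushes $i_{a(-n)}$ past $\overline{ad}_{\check{b}(k)}$ (applied to $b(-k)\wedge\vec{x}(-\vec{p})$) via Lemma \ref{lem6.2.4}. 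The shifted indices $p_j(\ell)$ encode exactly the fact that the $\ell$-th slot of $\vec{x}(-\vec{p})$ has its weight $p_\ell$ replaced by $p_\ell - k$ (or killed) after $\overline{ad}_{\check{b}(k)}$ has acted on it, and one slot of index $k$ is inserted in front after wedging with $b(-k)$.

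The main bookkeeping obstacle is threefold: (i) tracking the sign conventions of $i^\ell$ and the graded derivation $\overline{ad}^\ell$ when pushing contractions through wedge products, which is handled by the identity in Lemma \ref{lem6.2.4}; (ii) separating the central-extension contribution of $[a(-n), b(-k)]$ in $\hat L(\fg,\sigma)$ from the loop part, using \eqref{eq1.1.1.4} (the central $C$ terms collapse against the contraction $i_{a(-n)}$ because $\vec{x}(-\vec{p})$ lives purely in $L^-(\fg,\sigma)$, so no spurious $C$ contributions survive); and (iii) correctly book-keeping the combinatorial factor $\frac{p_j(\ell)+n}{n\,p_j(\ell)}$ obtained from the pair $\frac{1}{p_j(\ell)} + \frac{1}{n}$ produced in Lemma \ref{lem7.4} when $d^*$ is applied to an element of $\wedge^{q+1}$ whose $j$-th entry has degree $p_j(\ell)$.

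Once the eight cross-terms of $dd^*+d^*d$ are written out with these conventions, matching them against \eqref{eqn7.5.1}--\eqref{eqn7.5.9} is term-by-term verification. I would organise the computation in a small table indexed by \emph{(first operator summand, second operator summand)} so that each of the nine displayed lines is identified with exactly one entry (two entries in the case of \eqref{eqn7.5.3}, where the contribution is generated by the commutator in Lemma \ref{lem6.2.4}). The image lies in $\wedge^q(\bar L^-(\fg,\sigma))\otimes \mathscr{H}_c(\lambda)\otimes V_{\vec z}(\vec\mu)$ because each inner summation over $b(-k)\in\hat{\mathcal B}$ is an $L^2$-convergent series in $\{\,,\,\}_0$, as discussed in the remark following Proposition \ref{newprop6.5}.
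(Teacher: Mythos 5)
Your proposal is correct and takes essentially the same route as the paper, which likewise derives all nine terms by composing the explicit formulas for $d$ (Lemma \ref{lem7.1}) and $d^*$ (Lemma \ref{lem7.4}) in both orders and sorting the resulting cross-terms. The only slight imprecision is your account of \eqref{eqn7.5.3}: it does not come from commuting $i_{a(-n)}$ past an $\overline{ad}_{\check{b}(k)}$, but is simply the $j=1$ contribution of $\sum_j i_{a(-n)}\, ad^j_{a(-n)}$ applied to $b(-k)\wedge \vec{x}(-\vec{p})$ (the ``Lie part of $d^*$'' acting on the ``module part of $d$''), where $ad^1_{a(-n)}$ hits the newly inserted slot $b(-k)$ with weight $q_1=k$ (whence the factor $\tfrac{k+n}{kn}$) and the diagonal contraction $\left\{[a(-n),b(-k)],a(-n)\right\}$ vanishes upon summing over $a(-n)$.
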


\begin{proof}
\begin{align*}
\overline{\Box}\left(\vec{x}(-\vec{p})\otimes v\right)
&=\frac{1}{2}d^* \sum_{b(-k)\in\hat{\mathcal{B}}} b(-k) \wedge \left( \overline{ad}_{\check{b}(k)} \vec{x}(-\vec{p})\right)\otimes v +d^* \sum_{b(-k)\in\hat{\mathcal{B}}}
 b(-k)\wedge \vec{x} (-\vec{p})\otimes \check{b}(k)\cdot v\\
&+\frac{1}{2} d\sum_{j\neq \ell; b(-p_\ell)\in\hat{\mathcal{B}}}\, \left(\frac{1}{p_j}+\frac{1}{p_\ell}\right)i_{b(-p_\ell)}^\ell (ad ^j_{b(-p_\ell)} \vec{x}(-\vec{p}))\otimes v\\
&+d\sum_{b(-k)\in\hat{\mathcal{B}}}\frac{1}{k}i_{{b}(-k)}\vec{x}(-\vec{p})\otimes b(-k)\cdot v, \mbox{ by Lemmas \ref{lem7.1} and \ref{lem7.4}}.
\end{align*}
Again using the Lemmas \ref{lem7.1} and \ref{lem7.4}, we get the proposition.
\end{proof}
We now simplify the terms \eqref{eqn7.5.1} - \eqref{eqn7.5.9} of Proposition \ref{prop7.5}. 

\begin{lemma} \label{lem7.6} With the notation as in  Proposition \ref{prop7.5}, we have the following, where $m$ is the order of the automorphism $\sigma$ and $\mathcal{B}_{\underline{0}}$ is an orthonormal basis of $\fg_{\underline{0}}$ as in Section \ref{section2.1}.
$$ \eqref{eqn7.5.2} +\eqref{eqn7.5.8}
=\sum_{b(-k)\in\hat{\mathcal{B}}}
\frac{1}{k}\overline{ad}_{\check{b}(k)}\vec{x}(-\vec{p})\otimes b(-k)\cdot v.$$
\begin{align*}
&\eqref{eqn7.5.5} +\eqref{eqn7.5.9}\\
&=-\sum_{ \ell; a(-n)\in \hB}\frac{1}{p_\ell}\left(ad^\ell_{a(-n)}\vec{x}(-\vec{p})\right)\otimes \left( \check{a}(n)\cdot v_1\right) \otimes v_2 - \sum_{\ell; a(-n)\in \hB}\frac{1}{p_\ell}\left(\overline{ad}^\ell_{\check{a}(n)}\vec{x}(-\vec{p})\right)\otimes \left( a(-n)\cdot v_1\right) \otimes v_2\\
&+\vec{x}(-\vec{p})\otimes \frac{q}{m} c\cdot v_1 \otimes v_2
-\sum_{\ell; b_0\in \mathcal{B}_{\underline{0}}}\frac{1}{p_\ell}\left(ad^\ell_{b_o} \vec{x}(-\vec{p})\right)\otimes (\check{b}_0\cdot v_1)\otimes v_2\\
&-\sum_{\ell; a(-n)\in \hB}\frac{1}{p_\ell}\left(ad^\ell_{a(-n)}\vec{x}(-\vec{p})\right)\otimes v_1 \otimes (z\bar{z})^{p_\ell} z^n \check{a}\cdot v_2 - \sum_{\ell; a(-n)\in \hB}\frac{1}{p_\ell}\left(\overline{ad}^\ell_{\check{a}(n)}\vec{x}(-\vec{p})\right)\otimes v_1 \otimes (z\bar{z})^{p_\ell-n}\bar{z}^n a\cdot v_2\\
&-\sum_{\ell; b_0\in \mathcal{B}_{\underline{0}}}\frac{1}{p_\ell}\left(ad^\ell_{b_o} \vec{x}(-\vec{p})\right)\otimes v_1\otimes   (z\bar{z})^{p_\ell}\check{b}_0\cdot v_2
+\sum_{a(-n)\in \hB} \frac{1}{n} \,\vec{x}(-\vec{p})\otimes a(-n) \check{a}(n)\cdot v.
\end{align*}
\begin{align*}\eqref{eqn7.5.3} +\eqref{eqn7.5.4} +\eqref{eqn7.5.7}&= \sum_{\ell; b(-k)\in\hat{\mathcal{B}}}\frac{p_\ell+k}{p_\ell\cdot k} ad^\ell_{b(-k)} \vec{x}(-\vec{p})\otimes \check{b}(k)\cdot v.\end{align*}
\begin{align*}
&\eqref{eqn7.5.1} +\eqref{eqn7.5.6}\\
&=\sum_{j\neq \ell; b(-k)\in\hat{\mathcal{B}}}\left( \frac{1}{k}-\frac{1}{p_\ell}\right) ad^j_{b(-k)}\overline{ad}^\ell_{\check{b}(k)}\vec{x}(-\vec{p})\otimes v
+\frac{1}{2}\sum_{\ell; b(-k)\in\hat{\mathcal{B}}} \frac{p_\ell}{\left(p_\ell-k\right)\cdot k}ad^\ell_{b(-k)}\overline{ad}^\ell_{\check{b}(k)} \vec{x}(-\vec{p})\otimes v\\
&-\frac{1}{2}\sum_{j\neq \ell; b_0\in \Bo} \left(\frac{1}{p_j}+\frac{1}{p_\ell}\right)ad^j_{b_0} ad^\ell_{\check{b}_0} \vec{x}(-\vec{p})\otimes v.
\end{align*}
\end{lemma}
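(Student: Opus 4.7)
The lemma collects the nine summands of Proposition \ref{prop7.5} into four clean expressions, so the plan is to verify each of the four identities separately by a direct algebraic computation. The common engine in all four cases is the graded derivation identity $i_x(y \wedge Z) = \{y, x\} Z - y \wedge i_x Z$ (a consequence of Lemma \ref{lem6.2.3}) together with Lemma \ref{lem6.2.4}, which allows us to commute an interior product past an $\overline{ad}$ operator at the cost of a correction term of the form $i^\ell_{[x,\kappa(y)]_-}$. The orthonormality $\{b(-k),a(-n)\}=\delta_{b,a}\delta_{k,n}$ collapses the double sums over $\hat{\mathcal{B}}$ into single sums whenever a Kronecker delta is produced, and we shall repeatedly use $\kappa(\check{b}(k)) = -b(-k)$ (from Lemma \ref{Lem_6.1}) to evaluate the correction terms.

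For the first identity, \eqref{eqn7.5.2}+\eqref{eqn7.5.8}, I would expand $i_{a(-n)}$ through the wedge in \eqref{eqn7.5.2}. The Kronecker piece collapses immediately to the asserted simple sum, and the residual piece $-b(-k)\wedge i_{a(-n)}(\overline{ad}_{\check{b}(k)}\vec{x})$ splits via Lemma \ref{lem6.2.4} into (i) a term that exactly cancels \eqref{eqn7.5.8}, and (ii) a commutator correction $i^\ell_{-[a,b](-n-k)}$; the latter contribution is killed by symmetrizing the double sum in $(a(-n), b(-k))$ and using antisymmetry of the Lie bracket. For \eqref{eqn7.5.5}+\eqref{eqn7.5.9} the same expansion produces, after the Kronecker collapse, the commutator $[\check{b}(k), a(-n)]$ acting on $v = v_1\otimes v_2$. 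The central-extension term from \eqref{eq1.1.1.4} delivers the $qc/m$ contribution (after pairing the $1/k$ coefficient with the $k/m$ scalar on $C$), while the evaluation action of Notation \ref{notation} gives the identity $a(-n)\check{b}(k)\cdot v_2^i - \check{b}(k)a(-n)\cdot v_2^i = z_i^k\bar{z}_i^n[a,\check{b}]\cdot v_2^i$, which is precisely what introduces the twisted factors $(z\bar{z})^{p_\ell}z^n$ and $(z\bar{z})^{p_\ell-n}\bar{z}^n$ into the right-hand side.

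For \eqref{eqn7.5.3}+\eqref{eqn7.5.4}+\eqref{eqn7.5.7} the strategy is to reindex the $j$-sum in \eqref{eqn7.5.4} by $j \mapsto j+1$ so that its $p_{j-1}$ factors align with the $p_j$ factors appearing in the other two sums, then expand all the interior products through the leading wedge factors. Term-by-term matching then cancels the $-\frac{k+n}{kn}[a(-n),b(-k)]\wedge(\cdots)$ part of \eqref{eqn7.5.3} against the new terms produced by \eqref{eqn7.5.4}, leaving the asserted single sum. Finally for \eqref{eqn7.5.1}+\eqref{eqn7.5.6}, I would split both sums into the diagonal case $j=\ell$ (or $j=\ell+1$ after the $i_{a(-n)}$ has been pushed onto the $\ell$-th factor) and the off-diagonal case. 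The off-diagonal contributions combine because $ad^j_{a(-n)}$ and $\overline{ad}^\ell_{\check{b}(k)}$ commute on distinct factors, yielding the mixed $ad^j_{b(-k)}\overline{ad}^\ell_{\check{b}(k)}$ terms. The on-diagonal contributions reduce to Casimir-type sums $\sum_{b(-k)} ad_{b(-k)}\overline{ad}_{\check{b}(k)}$ acting on a single $x_\ell(-p_\ell)$; separating off the $k=0$ zero-mode produces the $\fg_{\underline{0}}$-piece with basis $\mathcal{B}_{\underline{0}}$, and the rest is handled by Proposition \ref{prop_casimir_id}.

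The main obstacle is purely the bookkeeping: keeping track of derivation signs, wedge-position signs, and $\kappa$-signs, while correctly isolating the special cases $j=\ell$ and $k=0$ from the generic double sums. The algebraic manipulations themselves are elementary, but the volume of terms makes it easy to misplace factors of $\tfrac12$ or a bracket orientation; the cleanest bookkeeping device is to carry out each of the four identities as a formal manipulation on the level of $\wedge^*(L^-(\fg,\sigma))$-valued operators first, treating the module factor $v$ as a spectator, and only at the end invoking the evaluation action and the central extension to extract the $(z\bar{z})^{p_\ell}$ factors and the $qc/m$ constant.
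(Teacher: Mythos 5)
Your overall engine---expanding the interior products through the leading wedge factor via Lemma \ref{lem6.2.3}, commuting $i_x$ past $\overline{ad}$ via Lemma \ref{lem6.2.4}, and collapsing double sums by orthonormality and contravariance (Lemma \ref{lem_herm2})---is exactly the paper's, and your accounts of the second and third identities (in particular the operator commutator $a(-n)\check{b}(k)-\check{b}(k)a(-n)=z^k\bar{z}^n[a,\check{b}]$ on the evaluation module as the source of the $(z\bar{z})$-factors, and the central term of \eqref{eq1.1.1.4} as the source of $qc/m$) are sound. But there are two concrete errors. First, in \eqref{eqn7.5.2}$+$\eqref{eqn7.5.8} the correction term $i^\ell_{[a(-n),\kappa(\check{b}(k))]_-}=i^\ell_{-[a,b](-n-k)}$ is \emph{not} killed by symmetrizing in $(a(-n),b(-k))$: the summand $\frac1n\,b(-k)\wedge i_{[a,b](-n-k)}\vec{x}(-\vec{p})\otimes a(-n)\cdot v$ is not antisymmetric under the swap, because the weight $\frac1n$, the surviving wedge factor and the module action all change along with the bracket. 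In fact, contravariance gives $\{x_\ell(-p_\ell),[a(-n),b(-k)]\}=\{[\check{a}(n),x_\ell(-p_\ell)],b(-k)\}$, and summing over $b(-k)\in\hB$ by completeness turns this correction into exactly $\frac12\sum_{a(-n)\in\hB}\frac1n\,\overline{ad}_{\check{a}(n)}\vec{x}(-\vec{p})\otimes a(-n)\cdot v$, i.e.\ the \emph{missing half} of the right-hand side; discarding it leaves you with only one half of the asserted answer, since the Kronecker collapse of \eqref{eqn7.5.2} carries the prefactor $\frac12$.

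Second, your treatment of \eqref{eqn7.5.1}$+$\eqref{eqn7.5.6} mislocates the $\mathcal{B}_{\underline{0}}$-terms and misuses Proposition \ref{prop_casimir_id}. The basis $\hB$ contains only $b(k)$ with $k<0$, so there is no ``$k=0$ zero-mode'' to separate from the diagonal sums; the $\mathcal{B}_{\underline{0}}$-contribution in the answer carries $j\neq\ell$ and arises from the off-diagonal double brackets $[[\check{b}(k),x_j(-p_j)],x_\ell(-p_\ell)]_-$, where the completeness expansion of $[\check{b}(k),x_j(-p_j)]$ must run over a basis of all of $(\fg[t,t^{-1}])^\sigma$ and hence picks up the degree-zero block $\fg_{\underline{0}}$---this is precisely Lemma \ref{lem6.2.8}, and the off-diagonal bookkeeping also needs the reindexing identity of Lemma \ref{newlem7.2.9}, not merely the fact that $ad^j$ and $\overline{ad}^\ell$ commute on distinct factors. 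Moreover Proposition \ref{prop_casimir_id} has no role at this stage: the right-hand side of the lemma keeps the diagonal sums $\sum_{b(-k)}ad^\ell_{b(-k)}\overline{ad}^\ell_{\check{b}(k)}$ as unevaluated operators, the lemma is asserted for an arbitrary automorphism $\sigma$ of order $m$ whereas Proposition \ref{prop_casimir_id} requires $\sigma$ special, and the Casimir evaluation is performed only later, in the proof of Theorem \ref{nakano} in Subsection \ref{nakanoproof}.
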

\begin{proof}
\begin{align*}
 \eqref{eqn7.5.2} &+\eqref{eqn7.5.8}\\
&=\frac{1}{2}\sum_{b(-k)\in\hat{\mathcal{B}}}
\frac{1}{k}\overline{ad}_{\check{b}(k)} \vec{x}(-\vec{p})\otimes b(-k)\cdot v
+\frac{1}{2}\sum _{a(-n)\in\hat{\mathcal{B}}}
 \frac{1}{n}\overline{ad}_{\check{a}(n)}\vec{x}(-\vec{p})\otimes a(-n)\cdot v\\
&=\sum_{b(-k)\in\hat{\mathcal{B}}}
\frac{1}{k}\overline{ad}_{\check{b}(k)}\vec{x}(-\vec{p})\otimes b(-k)\cdot v.
\end{align*}
\begin{align*}
&\eqref{eqn7.5.5} +\eqref{eqn7.5.9}\\
&=\sum_{a(-n), b(-k)\in \hB}\frac{1}{n}i_{a(-n)}\left(b(-k)\wedge \vec{x}(-\vec{p})\right) \otimes a(-n)\check{b}(k)\cdot v
+\sum_{a(-n), b(-k)\in \hB}
\frac{1}{n}b(-k)\wedge \left(i_{a(-n)} \vec{x}(-\vec{p})\right) \otimes \check{b}(k) a(-n)\cdot v\\
&=\sum_{a(-n)\in \hB}\frac{1}{n} \vec{x}(-\vec{p})\otimes a(-n)\check{a}(n)\cdot v
-\sum_{a(-n), b(-k)\in \hB}
\frac{1}{n}b(-k)\wedge  \left(i_{a(-n)}\vec{x}(-\vec{p})\right) \otimes a(-n)\check{b}(k)\cdot v\\
&+\sum_{a(-n), b(-k)\in \hB}
\frac{1}{n}b(-k)\wedge  \left(i_{a(-n)}\vec{x}(-\vec{p})\right) \otimes \check{b}(k)a(-n)\cdot v\\
&=\sum_{a(-n)\in \hB}\frac{1}{n} \vec{x}(-\vec{p})\otimes a(-n)\check{a}(n)\cdot v
-\sum_{a(-n), b(-k)\in \hB}
\frac{1}{n}b(-k)\wedge  \left(i_{a(-n)}\vec{x}(-\vec{p})\right) 
\otimes \left(a(-n)\check{b}(k)\cdot v-\check{b}(k)a(-n)\cdot v\right)\\
&=\sum_{a(-n)\in \hB}\frac{1}{n} \vec{x}(-\vec{p})\otimes a(-n)\check{a}(n)\cdot v\\
&- \sum_{n>0; \ell; b(-k), a(-n)\in \hB}\frac{1}{n}x_1(-p_1)\wedge \quad\wedge b(-k)^{[\ell]}\wedge \cdots \wedge x_q(-p_q)\otimes \{x_\ell(-p_\ell), a(-n)\}\left(a(-n)\check{b}(k)\cdot v-\check{b}(k)a(-n)\cdot v\right)\\
&\qquad \text{ where $(\,)^{[\ell]}$ denotes the enclosed item placed in $\ell$-th place} \\
&= \sum_{a(-n)\in \hB}\frac{1}{n} \vec{x}(-\vec{p})\otimes a(-n)\check{a}(n)\cdot v\\
&- \sum_{\ell; b(-k)\in \hB}\left( \frac{1}{p_\ell}x_1(-p_1)\wedge \cdots \wedge b(-k)^{[\ell]}\wedge \cdots \wedge x_q (-p_q)
\otimes \left(\left[x_\ell(-p_\ell),\check{b}(k)\right]\cdot v_1\right)\otimes v_2
+ v_1 \otimes \left[\theta (x_\ell(-p_\ell)),\theta (\check{b}(k))\right]\cdot v_2 \right ),\\
&\qquad\qquad \mbox{where $\theta$ denotes the action on $V_{\vec{z}}(\vec{\mu})$}\\
&= \sum_{a(-n)\in \hB}\frac{1}{n} \vec{x}(-\vec{p})\otimes a(-n)\check{a}(n)\cdot v
-\sum_{\ell; b(-k)\in \hB}\frac{1}{p_\ell}x_1(-p_1)\wedge \cdot \wedge b(-k)^{[\ell]}\wedge \cdot \wedge x_q(-p_q)\\
&\otimes \bigg(\sum_{a(-n)\in \hB}\left\{[x_\ell(-p_\ell), \check{b}(k)], \check{a}(n)\right\} 
(\check{a}(n)\cdot v_1)\otimes v_2+ \sum_{a(-n)\in \hB}\left\{[x_\ell(-p_\ell), \check{b}(k)], {a}(-n)\right\} 
({a}(-n)\cdot v_1)\otimes v_2 \\
&+ v_1\otimes [\theta (x_\ell(-p_\ell), \theta(\check{b}(k)]\cdot v_2 + (\delta_{k, p_\ell}([x_\ell, \check{b}] - p_\ell \langle x_\ell, \check{b}\rangle C)\cdot v_1)\otimes v_2\bigg)\\
& \qquad \mbox{where in the last sum only those $\check{b}$ appear which belong to $\fg_{p_\ell}$ and $\hB_{p_\ell}$ is an orthonormal basis of $\fg_{\underline{p}_\ell}$}\\
&=-\sum_{\ell; b(-k)\in \hB}\frac{1}{p_\ell}x_1(-p_1)\wedge \cdot \wedge b(-k)^{[\ell]}\wedge \cdots \wedge x_q(-p_q)\otimes\\
&\bigg(\sum_{a(-n)\in \hB}\left\{\left[a(-n), x_\ell(-p_\ell)\right],b(-k)\right\}(\check{a}(n)\cdot v_1)\otimes v_2 +\sum_{a(-n)\in \hB}
\left\{\left[\check{a}(n), x_\ell(-p_\ell)\right],b(-k)\right\}(a(-n)\cdot v_1)\otimes v_2\\
&+\left(\delta_{k,p_\ell}\left(\left[x_\ell, \check{b}\right]-p_\ell\langle x_\ell,\check{b}\rangle C \right)\cdot v_1\right) \otimes v_2
+ v_1 \otimes \left[\theta (x_\ell(-p_\ell)),\theta (\check{b}(k))\right]\cdot v_2\bigg), \mbox{ by Lemma \ref{lem_herm2}}
\\
&=-\sum_{\ell; a(-n)\in \hB}\frac{1}{p_\ell}\left(ad^\ell_{a(-n)}\vec{x}(-\vec{p})\right)\otimes \left( \check{a}(n) v_1\right) \otimes v_2 - \sum_{\ell; a(-n)\in \hB}\frac{1}{p_\ell}\left(\overline{ad}^\ell_{\check{a}(n)}\vec{x}(-\vec{p})\right)\otimes \left( a(-n)\cdot v_1\right) \otimes v_2\\
&+\frac{qc}{m}\vec{x}(-\vec{p})\otimes  v_1 \otimes v_2
-\sum_{\ell; b_0\in \Bo}\frac{1}{p_\ell}ad^\ell_{b_0} \vec{x}(-\vec{p})\otimes (\check{b}_0\cdot v_1)\otimes v_2\\
&-\sum_{\ell; a(-n)\in \hB}\frac{1}{p_\ell}\left(ad^\ell_{a(-n)}\vec{x}(-\vec{p})\right)\otimes v_1 \otimes \left((z\bar{z})^{p_\ell} z^n \check{a}\right)\cdot v_2 - \sum_{\ell; a(-n)\in \hB}\frac{1}{p_\ell}\left(\overline{ad}^\ell_{\check{a}(n)}\vec{x}(-\vec{p})\right)\otimes v_1 \otimes \left((z\bar{z})^{p_\ell-n}\bar{z}^n a\right)\cdot v_2\\
&-\sum_{\ell; b_0\in \mathcal{B}_{\underline{0}}}\frac{1}{p_\ell}\left(ad^\ell_{b_o} \vec{x}(-\vec{p})\right)\otimes v_1\otimes  \left( (z\bar{z})^{p_\ell}\check{b}_0\right)\cdot v_2 
+\sum_{a(-n)\in \hB} \frac{1}{n} \vec{x}(-\vec{p})\otimes a(-n)\check{a}(n)\cdot v.
\end{align*}

\begin{align*}
&\eqref{eqn7.5.3} +\eqref{eqn7.5.4} +\eqref{eqn7.5.7}\\
&=-\frac{1}{2}\sum_{\ell; b(-k), a(-n)\in\hat{\mathcal{B}}}
\frac{p_\ell+k}{p_\ell\cdot k}\left[ a (-n),b(-k)\right]\wedge \left(i^\ell_{a(-n)} \vec{x}(-\vec{p})\right)\otimes \check{b}(k)\cdot v
+\frac{1}{2}\sum_{\ell; b(-k)\in\hat{\mathcal{B}}}
\frac{p_\ell+k}{p_\ell\cdot k} ad^j_{b(-k)}\vec{x}(-\vec{p})\otimes \check{b}(k)\cdot v\\
&=\sum_{\ell; b(-k)\in\hat{\mathcal{B}}}\frac{p_\ell+k}{p_\ell\cdot k} ad^\ell_{b(-k)} \vec{x}(-\vec{p})\otimes \check{b}(k)\cdot v.\\
\end{align*}

\begin{align*}
&\eqref{eqn7.5.1} +\eqref{eqn7.5.6} =\\
&\frac{1}{4} \sum_{1\leq \ell\leq q; 1\leq j\leq q+1; b(-k), a(-n)\in\hat{\mathcal{B}}}
\frac{p_j(\ell)+n}{p_j(\ell)\cdot n}i_{a(-n)}ad^j_{a(-n)}
\left( b(-k)\wedge \overline{ad}^\ell_{\check{b}(k)}\vec{x}(-\vec{p})\right)\otimes v\\
&+\frac{1}{4} \sum_{j; b(-k), a(-n)\in \hB}\frac{n+p_j}{n\cdot p_j}b(-k)\wedge \overline{ad}_{\check{b}(k)} \left(i_{a(-n)}ad^j_{a(-n)}\vec{x}(-\vec{p})\right)\otimes v\\
&=\frac{1}{4} \sum_{\ell; b(-k), a(-n)\in\hat{\mathcal{B}}} \frac{k+n}{kn}i_{a(-n)}\left([a(-n), b(-k)] \wedge  \overline{ad}^\ell_{\check{b}(k)}\vec{x}(-\vec{p})\right)\otimes v \\
&+ \frac{1}{4} \sum_{\ell\neq j; b(-k), a(-n)\in\hat{\mathcal{B}}}  \frac{p_j+n}{p_jn}i_{a(-n)}\left(b(-k) \wedge \ad^j_{a(-n)}\overline{ad}^\ell_{\check{b}(k)}\vec{x}(-\vec{p})\right)\otimes v\\
&+ \frac{1}{4} \sum_{\ell; b(-k), a(-n)\in\hat{\mathcal{B}}}  \frac{p_\ell-k+n}{(p_\ell-k)n}i_{a(-n)}\left(b(-k) \wedge \ad^\ell_{a(-n)}\overline{ad}^\ell_{\check{b}(k)}\vec{x}(-\vec{p})\right)\otimes v \\
&+ \frac{1}{4} \sum_{j; b(-k), a(-n)\in \hB}\frac{n+p_j}{n\cdot p_j}b(-k)\wedge \overline{ad}_{\check{b}(k)} \left(i_{a(-n)}ad^j_{a(-n)}\vec{x}(-\vec{p})\right)\otimes v\\
&=-\frac{1}{4} \sum_{\ell; b(-k), a(-n)\in\hat{\mathcal{B}}} \frac{k+n}{kn}
\left([a(-n), b(-k)] \wedge i_{a(-n)} \overline{ad}^\ell_{\check{b}(k)}\vec{x}(-\vec{p})\right)\otimes v \\
&+ \frac{1}{4} \sum_{\ell\neq j; b(-k)\in\hat{\mathcal{B}}} 
\frac{p_j+k}{p_jk} \left(\ad^j_{b(-k)}\overline{ad}^\ell_{\check{b}(k)}\vec{x}(-\vec{p})\right)\otimes v\\
& - \frac{1}{4} \sum_{\ell\neq j; b(-k), a(-n)\in\hat{\mathcal{B}}} \frac{p_j+n}{p_jn}
b(-k) \wedge \left(i_{a(-n)} \ad^j_{a(-n)}\overline{ad}^\ell_{\check{b}(k)}\vec{x}(-\vec{p})\right)\otimes v \\
 &+ \frac{1}{4} \sum_{\ell; b(-k)\in\hat{\mathcal{B}}}  \frac{p_\ell}{(p_\ell-k)k} \left(\ad^\ell_{b(-k)}\overline{ad}^\ell_{\check{b}(k)}\vec{x}(-\vec{p})\right)\otimes v  \\
 &  - \frac{1}{4} \sum_{\ell; b(-k), a(-n)\in\hat{\mathcal{B}}} \frac{p_\ell -k+n}{(p_\ell -k)n}
b(-k) \wedge \left(i_{a(-n)} \ad^\ell_{a(-n)}\overline{ad}^\ell_{\check{b}(k)}\vec{x}(-\vec{p})\right)\otimes v  \\
&+ \frac{1}{4} \sum_{j; b(-k), a(-n)\in \hB}\frac{n+p_j}{n\cdot p_j}b(-k)\wedge \overline{ad}_{\check{b}(k)}
\left(i_{a(-n)}ad^j_{a(-n)}\vec{x}(-\vec{p})\right)\otimes v\\
&=\frac{1}{4}\sum_{j\neq \ell; b(-k)\in \hB} \frac{k+p_j}{k\cdot p_j}\ad^j_{b(-k)}\overline{ad}^\ell_{\check{b}(k)} \vec{x}(-\vec{p})\otimes v\\
\end{align*}
\begin{align*} 
&+\frac{1}{4}\sum_{\ell; b(-k)\in \hB} \frac{p_\ell}{(p_\ell-k)k} \ad^\ell_{b(-k)}\overline{ad}^\ell_{\check{b}(k)} \vec{x}(-\vec{p})\otimes v\\
&+\frac{1}{4}\sum_{j\neq \ell; b(-k)\in \hB} \frac{k+p_j}{kp_j}ad^j_{b(-k)}\overline{ad}^\ell_{\check{b}(k)}\vec{x}(-\vec{p})\otimes v\\
&+\frac{1}{4}\sum_{\ell\neq j; b(-k)\in \hB}\frac{p_j+p_\ell-k}{p_j(p_\ell-k)} x_1(-p_1) \wedge \dots \wedge b(-k)^{[\ell]}\wedge \dots \wedge \left(ad_{x_j(-p_j)}\overline{ad}_{\check{b}(k)}(x_\ell(-p_\ell))\right)^{[j]}\wedge \dots \wedge x_q(-p_q)\otimes v\\
&-\frac{1}{4}\sum_{\ell\neq j; i\notin \{j, \ell\}; b(-k)\in \hB}\frac{p_j+p_i}{p_j\cdot p_i} x_1(-p_1) \wedge \dots \wedge \left(\overline{ad}_{\check{b}(k)} x_\ell(-p_\ell)\right)^{[\ell]}
\wedge \dots \wedge [x_i(-p_i), x_j(-p_j)]^{[j]}\wedge \dots \wedge b(-k)^{[i]}\wedge \dots \wedge x_q(-p_q)\otimes v\\
&+\frac{1}{4} \sum_{\ell; b(-k)\in \hB} \frac{p_\ell}{k(p_\ell-k)} {ad}^\ell_{b(-k)}\overline{ad}^\ell_{\check{b}(k)} \vec{x}(-\vec{p})\otimes v\\
&-\frac{1}{4}\sum_{\ell\neq j; b(-k)\in \hB} \frac{p_j+p_\ell-k}{p_j(p_\ell-k)} x_1(-p_1) \wedge \dots \wedge \left(ad_{x_j(-p_j)}\overline{ad}_{\check{b}(k)}(x_\ell(-p_\ell))\right)^{[\ell]}\wedge \dots  \wedge b(-k)^{[j]}\wedge \dots 
\wedge x_q(-p_q)\otimes v\\
&+\frac{1}{4}\sum_{j\neq \ell; b(-k)\in \hB}\frac{p_j+p_\ell}{p_j\cdot p_\ell}\, \overline{ad}_{\check{b}(k)}\left( x_1(-p_1) \wedge \dots  \wedge \left(ad_{x_\ell(-p_\ell)}x_j(-p_j)\right)^{[j]}\wedge \dots  \wedge b(-k)^{[\ell]}\wedge
\dots  \wedge x_q(-p_q)\right)\otimes v\\
&= \frac{1}{2}\sum_{j\neq \ell; b(-k)\in \hB} \frac{k+p_j}{k\cdot p_j}\ad^j_{b(-k)}\overline{ad}^\ell_{\check{b}(k)} \vec{x}(-\vec{p})\otimes v
+ \frac{1}{2}\sum_{\ell; b(-k)\in \hB} \frac{p_\ell}{k (p_\ell -k)}\ad^\ell_{b(-k)}\overline{ad}^\ell_{\check{b}(k)} \vec{x}(-\vec{p})\otimes v\\
&+\frac{1}{4}\sum_{\ell\neq j; b(-k)\in \hB}\frac{p_j+p_\ell-k}{p_j(p_\ell-k)} x_1(-p_1) \wedge \dots \wedge b(-k)^{[\ell]}\wedge \dots \wedge \left(ad_{x_j(-p_j)}\overline{ad}_{\check{b}(k)}(x_\ell(-p_\ell))\right)^{[j]}\wedge \dots \wedge x_q(-p_q)\otimes v\\
&-\frac{1}{4}\sum_{\ell\neq j; b(-k)\in \hB}\frac{p_j+p_\ell-k}{p_\ell(p_j-k)} x_1(-p_1) \wedge \dots \wedge b(-k)^{[\ell]}\wedge \dots \wedge \left(ad_{x_\ell(-p_\ell)}\overline{ad}_{\check{b}(k)}(x_j(-p_j))\right)^{[j]}\wedge \dots \wedge x_q(-p_q)\otimes v\\
&+\frac{1}{4}\sum_{\ell\neq j; b(-k)\in \hB} \frac{p_j+p_\ell}{p_j\cdot p_\ell} x_1(-p_1) \wedge \dots  \wedge \left(\overline{ad}_{\check{b}(k)}\ad_{x_j(-p_j)}(x_\ell(-p_\ell))\right)^{[\ell]}\wedge \dots  \wedge b(-k)^{[j]}
\wedge x_q(-p_q)\otimes v\\
&=\frac{1}{2}\sum_{j\neq \ell; b(-k)\in \hB} \frac{k+p_j}{k\cdot p_j}\ad^j_{b(-k)}\overline{ad}^\ell_{\check{b}(k)} \vec{x}(-\vec{p})\otimes v\\
&+ \frac{1}{2}\sum_{\ell; b(-k)\in \hB} \frac{p_\ell}{k (p_\ell -k)}\ad^\ell_{b(-k)}\overline{ad}^\ell_{\check{b}(k)} \vec{x}(-\vec{p})\otimes v
+\frac{1}{4} \sum_{\ell \neq j; a(-n)\in \hB} \frac{p_j+n}{n \cdot p_j}\, \overline{ad}^\ell_{\check{a}(n)} \ad^j_{a(-n)}
\vec{x}(-\vec{p})\otimes v\\
&-\frac{1}{4}\sum_{\ell\neq j; a(-n)\in \hB}\frac{p_\ell+n}{n\cdot p_\ell} x_1(-p_1)\wedge \dots \wedge\left(\overline{ad}_{\check{a}(n)}(x_j(-p_j))\right)^{[\ell]}\wedge \dots \wedge  \left(ad_{a(-n)}x_\ell(-p_\ell)\right)^{[j]}\wedge \cdots
x_q(-p_q)\otimes v\\
&+\frac{1}{4}\sum_{\ell\neq j; b(-k)\in \hB} \frac{p_j+p_\ell}{p_j\cdot p_\ell} x_1(-p_1) \wedge \dots  \wedge \left(\overline{ad}_{\check{b}(k)}\ad_{x_j(-p_j)}(x_\ell(-p_\ell))\right)^{[\ell]}\wedge \dots  \wedge b(-k)^{[j]}\wedge \dots
\wedge x_q(-p_q)\otimes v,\,\,\text{by Lemma \ref{newlem7.2.9}}\\
&=\sum_{j\neq \ell; b(-k)\in \hB} \frac{p_\ell-k}{k\cdot p_\ell}\ad^j_{b(-k)}\overline{ad}^\ell_{\check{b}(k)} \vec{x}(-\vec{p})\otimes v
+ \frac{1}{2}\sum_{\ell; b(-k)\in \hB} \frac{p_\ell}{k (p_\ell -k)}\ad^\ell_{b(-k)}\overline{ad}^\ell_{\check{b}(k)} \vec{x}(-\vec{p})\otimes v\\
& -
\frac{1}{2} \sum_{j\neq \ell; b_0\in \mathcal{B}_{\underline{0}}} \frac{p_\ell+p_j}{p_\ell p_j}ad^j_{b_0}
{ad}^\ell_{\check{b}_0}\vec{x}(-\vec{p})\otimes v,\,\, \text{by following Lemma \ref{lem6.2.8}}.
\end{align*}
This completes the proof of the lemma. 
\end{proof}

\begin{lemma}\label{newlem7.2.9}
For $j\neq \ell$ and $\vec{x}(-\vec{p})= x_1(-p_1)\wedge\dots \wedge x_q(-p_q)\in  \wedge ^{q}\left(L^-(\fg, \sigma)\right)$, we have
\begin{align*}
&\sum_{b(-k)\in\hat{\mathcal{B}}}\, \frac{p_j+p_\ell-k}{p_j(p_\ell-k)} x_1(-p_1) \wedge \dots \wedge b(-k)^{[\ell]}\wedge \dots \wedge \left(ad_{x_j(-p_j)}\overline{ad}_{\check{b}(k)}(x_\ell(-p_\ell))\right)^{[j]}\wedge \dots \wedge x_q(-p_q)\\
&=\sum_{a(-n)\in\hat{\mathcal{B}}}\, \frac{p_j+n}{p_j\cdot n} \overline{ad}^\ell_{\check{a}(n)}\ad^j_{a(-n)}\vec{x}(-\vec{p}).
\end{align*}
\end{lemma}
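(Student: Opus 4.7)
The plan is to reparametrize the sum on the LHS so that both sides are indexed by the same data, and then reduce the identity to a single algebraic relation in $\fg$. First note that $\overline{ad}_{\check{b}(k)}(x_\ell(-p_\ell)) = [\check{b}, x_\ell] \otimes t^{k-p_\ell}$ projects nontrivially to $L^-(\fg,\sigma)$ only when $0 < k < p_\ell$ (for $k \geq p_\ell$ the bracket lands in $L^+(\fg,\sigma) \oplus \fg_{\underline{0}} \oplus \bbC C$), so the LHS sum is supported on $0 < k < p_\ell$. Substitute $n := p_\ell - k$; then $b \in \mathcal{B}_{\underline{-k}} = \mathcal{B}_{\underline{n-p_\ell}}$, the coefficient $\frac{p_j + p_\ell - k}{p_j(p_\ell - k)}$ becomes $\frac{p_j+n}{p_j n}$, and
\[
[x_j(-p_j),\, [\check{b}, x_\ell]\otimes t^{-n}] = [x_j, [\check{b}, x_\ell]] \otimes t^{-p_j-n}
\]
with no central term since $-p_j - n < 0$. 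On the RHS, likewise, $\overline{ad}^\ell_{\check{a}(n)}$ acts nontrivially only for $0 < n < p_\ell$, producing $[\check{a}, x_\ell] \otimes t^{n-p_\ell}$ at position $\ell$, while $ad^j_{a(-n)}$ places $[a, x_j] \otimes t^{-p_j-n}$ at position $j$.

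Since all other positions of the wedge product are unchanged, for each $n \in \{1, \dots, p_\ell - 1\}$ the identity reduces to showing, for every $b \in \mathcal{B}_{\underline{n-p_\ell}}$,
\[
\sum_{a \in \mathcal{B}_{\underline{-n}}} \{[\check{a}, x_\ell], b\}\, [a, x_j] \;=\; [x_j, [\check{b}, x_\ell]],
\]
where we have used the expansion $[\check{a}, x_\ell] = \sum_b \{[\check{a}, x_\ell], b\}\, b$ in the $\{,\}$-orthonormal basis of $\fg_{\underline{n-p_\ell}}$ to read off the coefficient of $b$ at position $\ell$ on the RHS of the lemma. By $\bbC$-linearity of $[-, x_j]$, this is equivalent to the identity in $\fg_{\underline{-n}}$:
\[
\sum_{a \in \mathcal{B}_{\underline{-n}}} \{[\check{a}, x_\ell], b\}\, a \;=\; -[\check{b}, x_\ell].
\]

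To verify this final identity, I would pair both sides with an arbitrary $a_0 \in \mathcal{B}_{\underline{-n}}$ under $\{,\}$. The LHS pairing gives $\{[\check{a_0}, x_\ell], b\}$ by orthonormality; applying the contravariance $\{[z,x],y\} = -\{x, [\kappa(z),y]\}$ together with $\kappa(\check{a_0}) = -a_0$ rewrites this as $-\{x_\ell, [b, a_0]\}$. On the RHS, $\{-[\check{b}, x_\ell], a_0\}$, by the same contravariance and $\kappa(\check{b}) = -b$, also equals $-\{x_\ell, [b, a_0]\}$. The two pairings agree for all $a_0$, so the identity holds. The main obstacle is really just careful bookkeeping of bidegrees (both the $\bbZ/m$-grading and the loop grading) and of the $\kappa$-dualities; once the identification $n = p_\ell - k$ is made and the matching of coefficients is observed, the algebraic content reduces to a single contravariance calculation applied symmetrically on each side.
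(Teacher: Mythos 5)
Your proof is correct and follows essentially the same route as the paper's: the paper expands $\overline{ad}_{\check b(k)}(x_\ell(-p_\ell))$ in the orthonormal basis $\{a(-n)\}$ and uses the contravariance of $\{\,,\,\}$ under $\kappa$ (Lemma \ref{lem_herm2}) to swap the roles of $b$ and $a$, which is exactly the dual-basis identity you isolate as $\sum_{a}\{[\check a,x_\ell],b\}\,a=-[\check b,x_\ell]$. Your explicit tracking of the support condition $0<k<p_\ell$ and the substitution $n=p_\ell-k$ is carried out implicitly in the paper's final displayed step, so the two arguments differ only in bookkeeping, not in substance.
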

\begin{proof}
\begin{align*}
&\sum_{b(-k)\in\hat{\mathcal{B}}}\, \frac{p_j+p_\ell-k}{p_j(p_\ell-k)} x_1(-p_1) \wedge \dots \wedge b(-k)^{[\ell]}\wedge \dots \wedge \left(ad_{x_j(-p_j)}\overline{ad}_{\check{b}(k)}(x_\ell(-p_\ell))\right)^{[j]}\wedge \dots \wedge x_q(-p_q)\\
&=\sum_{b(-k), a(-n)\in\hat{\mathcal{B}}}\, \frac{p_j+p_\ell-k}{p_j(p_\ell-k)} x_1(-p_1) \wedge \dots \wedge \{[\check{b}(k), x_\ell(-p_\ell)], a(-n)\}b(-k)^{[\ell]}\wedge \dots \wedge \left(ad_{x_j(-p_j)}(a(-n))\right)^{[j]}\wedge \dots \wedge x_q(-p_q)\\
&=\sum_{b(-k), a(-n)\in\hat{\mathcal{B}}}\, \frac{p_j+p_\ell-k}{p_j(p_\ell-k)} x_1(-p_1) \wedge \dots \wedge \{[\check{a}(n), x_\ell(-p_\ell], b(-k)\}b(-k)^{[\ell]}\wedge \dots \wedge \left(ad_{a(-n)}x_j(-p_j)\right)^{[j]}\wedge \dots \wedge x_q(-p_q),\\&\qquad\qquad\text{by Lemma \ref{lem_herm2}}\\
&=\sum_{a(-n)\in\hat{\mathcal{B}}}\, \frac{p_j+n}{p_j\cdot n} x_1(-p_1) \wedge \dots \wedge \left(\overline{ad}_{\check{a}(n)}x_\ell(-p_\ell)\right)^{[\ell]}\wedge \dots \wedge \left(ad_{a(-n)} x_j(-p_j)\right)^{[j]}\wedge \dots \wedge x_q(-p_q).
\end{align*}
This proves the lemma.
\end{proof}

\begin{lemma}\label{lem6.2.8}
For $\vec{x}(-\vec{p})= x_1(-p_1)\wedge\dots \wedge x_q(-p_q) \in  \wedge ^{q}\left(L^-(\fg, \sigma)\right)$, we have
\begin{align*}
&\sum_{j\neq \ell; b(-k)\in\hat{\mathcal{B}}}\left(\frac{1}{p_\ell}+\frac{1}{p_j}\right) x_1 (-p_1)\wedge \cdots \wedge b(-k)^{[j]}
\wedge \cdots \wedge\left[\left[\check{b}(k),x_j (-p_j)\right],x_\ell(-p_\ell)\right]_-^{[\ell]} \\
&\qquad\wedge \cdots \wedge x_q(-p_q)\\
&=-2\sum_{j\neq \ell; a(-n)\in\hat{\mathcal{B}}}\left(\frac{1}{p_\ell}+\frac{1}{p_j}\right)\, ad^j_{a(-n)}\,\overline{ad}^\ell_{\check{a}(n)}\vec{x}(-\vec{p})
-\sum_{j\neq \ell; b_0\in \CB_{\underline{0}}}\left(\frac{1}{p_\ell}+\frac{1}{p_j}\right)\, ad^j_{b_0}\,ad^\ell_{\check{b}_0} \vec{x}(-\vec{p}),
\end{align*}
where $[\,]_-$ denotes the projection to  $L^-(\fg, \sigma)$.
Similarly, 
\begin{align*}&\sum_{j\neq \ell; b(-k)\in\hat{\mathcal{B}}}\left(\frac{1}{p_\ell}+\frac{1}{p_j}\right) x_1 (-p_1)\wedge \cdots \wedge b(-k)^{[j]}
\wedge \cdots \wedge\left[x_j (-p_j),[\check{b}(k),x_\ell(-p_\ell)]\right]^{[\ell]} _-
\wedge \cdots \wedge x_q(-p_q)\\
&=-2\sum_{j\neq \ell; a(-n)\in\hat{\mathcal{B}}}\left(\frac{1}{p_j}+\frac{1}{p_\ell}\right)\, ad^j_{a(-n)}\,\overline{ad}^{\ell}_{\check{a}(n)} \vec{x}(-\vec{p})
-\sum_{j\neq \ell; b_0\in \CB_{\underline{0}}}
\left(\frac{1}{p_j}+\frac{1}{p_\ell}\right)\, ad^j_{b_0}\,ad^\ell_{\check{b}_0} \vec{x}(-\vec{p}).
\end{align*}
\end{lemma}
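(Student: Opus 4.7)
The plan is to reduce both identities using (i) the Jacobi identity in $\hat{L}(\fg,\sigma)$, (ii) projection onto $L^-(\fg,\sigma)$, and (iii) the completeness relation
\[
\sum_{b \in \mathcal{B}_{\underline{-k}}} b \otimes [\check{b}, Z] \;=\; \sum_{b' \in \mathcal{B}_{\underline{-k-s}}} [Z, b'] \otimes \check{b'}, \qquad Z \in \fg_{\underline{s}},
\]
which is a direct consequence of the $\fg^\sigma$-invariance of $\langle\,,\,\rangle$. A straightforward expansion of the inner and outer brackets yields
\[
[[\check{b}(k), x_j(-p_j)], x_\ell(-p_\ell)]_{-} \;=\; \begin{cases} [[\check{b}, x_j], x_\ell](k-p_j-p_\ell) & \text{if } 1 \le k \le p_j+p_\ell-1, \\ 0 & \text{otherwise,} \end{cases}
\]
since the potential central-term contributions at the boundaries $k=p_j$ and $k=p_j+p_\ell$ are killed by the outer bracket and the projection respectively.

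For the first identity I would, at each fixed $j \ne \ell$, partition the sum on $b(-k)$ into three regimes according to the sign of $n := k - p_j$. In \emph{regime (a)}, $1 \le n \le p_\ell - 1$, the completeness identity with $Z = x_j$, composed with $\ad_{x_\ell}$ on the second tensor factor, turns the $b$-sum into $\sum_{b' \in \mathcal{B}_{\underline{-n}}} [x_j, b'](-n-p_j) \otimes [\check{b'}, x_\ell](n-p_\ell)$, contributing exactly one-half of the first RHS sum. In \emph{regime (b)}, $n = 0$, the same identity specializes to $b' \in \mathcal{B}_{\underline{0}}$ and reproduces the $b_0$-sum of the RHS verbatim. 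In \emph{regime (c)}, $n < 0$ (set $m := p_j - k \ge 1$), the identity yields a sum over $b' \in \mathcal{B}_{\underline{m}}$; I would re-parametrize via the basis-independent equality $\sum_{b' \in \mathcal{B}_{\underline{m}}} b' \otimes \check{b'} = \sum_{c \in \mathcal{B}_{\underline{-m}}} \check{c} \otimes c$ and then relabel $(j,\ell) \leftrightarrow (\ell,j)$ in the outer sum---which is legitimate because both the index range and the coefficient $\tfrac{1}{p_\ell}+\tfrac{1}{p_j}$ are symmetric---to recover precisely the remaining half of the first RHS sum. Summing the three contributions completes the proof of the first identity.

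For the second identity, I would invoke Jacobi in the form $[x_j, [\check{b}(k), x_\ell]] = -[[\check{b}(k), x_j], x_\ell] + [\check{b}(k), [x_j, x_\ell]]$, so that $\mathrm{LHS}_{2} = -\mathrm{LHS}_{1} + T$, where $T$ is the residual sum obtained by replacing the $\ell$-slot with $[\check{b}(k), [x_j, x_\ell]]_{-}$. Running the analogous three-regime argument on $T$ (now with $Z = [x_j, x_\ell] \in \fg_{\underline{-p_j - p_\ell}}$, so that the completeness identity shifts the summation to $b' \in \mathcal{B}_{\underline{p_j + p_\ell - k}}$) yields $T = 2\cdot\mathrm{RHS}$, whence $\mathrm{LHS}_{2} = \mathrm{RHS}$ as well.

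The principal obstacles are the careful bookkeeping of the central-term contributions (verifying in each of the three regimes that they drop out under $[\,,\,]_-$), the matching of gradings and $t$-powers through the re-parametrization $b \mapsto b'$, and the exploitation of the $(j,\ell)\leftrightarrow(\ell,j)$ symmetry of the outer sum to combine regimes $(a)$ and $(c)$ into the overall factor of $-2$ on the RHS.
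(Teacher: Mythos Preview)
Your treatment of the first identity is correct and is essentially the paper's own argument in a different dress. The paper expands the inner bracket $[\check b(k),x_j(-p_j)]$ in the full orthonormal basis $\{a(-n)\}\cup\{\check a(n)\}\cup\{b_0\}$ of $(\fg[t,t^{-1}])^\sigma$ and then sums over $b(-k)$ using the invariance relation $\langle[\check b(k),x_j(-p_j)],\check a(n)\rangle=\langle[x_j(-p_j),\check a(n)],\check b(k)\rangle$ (this is exactly what you call the ``completeness relation''). Your three regimes $k>p_j$, $k=p_j$, $k<p_j$ correspond one-for-one to the three basis pieces $\check a(n)$, $b_0$, $a(-n)$, and both arguments finish by exploiting the $(j,\ell)$-symmetry of the outer sum to merge the two non-zero regimes into the factor~$-2$.

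The gap is in your handling of the second identity. You propose to write $\mathrm{LHS}_2=-\mathrm{LHS}_1+T$ via Jacobi and then claim that the ``analogous three-regime argument'' gives $T=2\cdot\mathrm{RHS}$. But the three-regime argument for $\mathrm{LHS}_1$ worked because the \emph{inner} bracket $[\check b(k),x_j(-p_j)]$ changes sign of loop-degree at $k=p_j$, which is what produces the trichotomy. In $T$ the inner piece is $[x_j(-p_j),x_\ell(-p_\ell)]$, which lives entirely in $L^-$; after your completeness relation the $j$-slot carries $[[x_j,x_\ell],b'](-k)$, and this does not factor into an $ad^j$-action times something, so there is no analogous regime-splitting that lands you on $\mathrm{RHS}$. (The statement $T=2\cdot\mathrm{RHS}$ is of course true \emph{a posteriori}, since both identities hold, but that is circular here.) The paper's route is simpler: for the second identity one runs \emph{exactly} the same expansion but on the inner bracket $[\check b(k),x_\ell(-p_\ell)]$ instead of $[\check b(k),x_j(-p_j)]$; the three basis pieces and the $(j,\ell)$-symmetry then combine identically to give the same right-hand side. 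You should replace your Jacobi reduction by this direct repetition of the argument.
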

\begin{proof}
\begin{align*}
&\sum_{j\neq \ell; b(-k)\in\hat{\mathcal{B}}}\left(\frac{1}{p_\ell}+\frac{1}{p_j}\right) x_1 (-p_1)\wedge \cdots \wedge b(-k)^{[j]}
\wedge \cdots \wedge\left[\left[\check{b}(k),x_j (-p_j)\right],x_\ell(-p_\ell)\right]^{[\ell]}_- \\
&\qquad\wedge \cdots \wedge x_q(-p_q)\\
&=\sum_{j\neq \ell; a(-n), b(-k)\in\hat{\mathcal{B}}}\left(\frac{1}{p_\ell}+\frac{1}{p_j}\right)x_1(-p_1)\wedge \cdots \wedge b(-k)^{[j]} 
\wedge \cdots \wedge \left\{\left[\check{b}(k),x_j (-p_j)\right],a(-n)\right\}\\
&\qquad\qquad\left[a(-n),x_\ell(-p_\ell)\right]^{[\ell]}\wedge \cdots \wedge x_q(-p_q)\\
&=\sum_{j\neq \ell; a(-n), b(-k)\in\hat{\mathcal{B}}}\left(\frac{1}{p_\ell}+\frac{1}{p_j}\right)x_1(-p_1)\wedge \cdots \wedge b(-k)^{[j]} 
\wedge \cdots \wedge \left\{\left[\check{b}(k),x_j (-p_j)\right],\check{a}(n)\right\}\\
&\qquad\qquad
\left[\check{a}(n),x_\ell(-p_\ell)\right]^{[\ell]}\wedge \cdots \wedge x_q(-p_q)\\
&+\sum_{j\neq \ell; b_0\in \CB_{\underline{0}};  b(-k)\in\hat{\mathcal{B}}}\left(\frac{1}{p_\ell}+\frac{1}{p_j}\right)x_1(-p_1)\wedge \cdots \wedge b(-k)^{[j]} 
\wedge \cdots \wedge \left\{\left[\check{b}(k),x_j (-p_j)\right],b_0\right\}\\
&\qquad\qquad
\left[b_0,x_\ell(-p_\ell)\right]^{[\ell]}\wedge \cdots \wedge x_q(-p_q)\\
&=\sum_{j\neq \ell; a(-n)\in\hat{\mathcal{B}}}\left(\frac{1}{p_\ell}+\frac{1}{p_j}\right)x_1(-p_1)\wedge \cdots \wedge \left[x_j (-p_j),\check{a}(n)\right]^{[j]}_- 
\\
&\qquad\wedge \cdots \wedge \left[a(-n),x_\ell(-p_\ell)\right]^{[\ell]}
\wedge \cdots \wedge x_q(-p_q)\\
&+\sum_{j\neq \ell; a(-n)\in\hat{\mathcal{B}}}\left(\frac{1}{p_\ell}+\frac{1}{p_j}\right)x_1(-p_1)\wedge \cdots \wedge \left[x_j (-p_j),a(-n)\right]^{[j]}\\
&\qquad\qquad
\wedge \cdots \wedge \left[\check{a}(n),x_\ell(-p_\ell)\right]^{[\ell]}_-
\wedge \cdots \wedge x_q(-p_q)\\
&+\sum_{j\neq \ell; b_0\in \CB_{\underline{0}}}\left(\frac{1}{p_\ell}+\frac{1}{p_j}\right)x_1(-p_1)\wedge \cdots \wedge \left[x_j (-p_j),\check{b}_0\right]^{[j]} \wedge \cdots \wedge \left[b_0,x_\ell(-p_\ell)\right]^{[\ell]}\\
&\qquad\qquad
\wedge \cdots \wedge x_q(-p_q),\,\,\,\text{by Lemma  \ref{lem_herm2}}\\
&=-2\sum_{j\neq \ell; a(-n)\in\hat{\mathcal{B}}}\left(\frac{1}{p_\ell}+\frac{1}{p_j}\right)\, ad^j_{a(-n)}\,\overline{ad}^\ell_{\check{a}(n)}\vec{x}(-\vec{p})
-\sum_{j\neq \ell; b_o\in \CB_{\underline{0}}}\left(\frac{1}{p_\ell}+\frac{1}{p_j}\right)\, ad^j_{b_0}\,ad^\ell_{\check{b}_0} \vec{x}(-\vec{p}).
\end{align*}
\end{proof}

\begin{lemma}\label{6.2.10}
If $\vec{x}(-\vec{p})\otimes v \in \wedge^q (L^-(\fg,\sigma)) \otimes \mathscr{H}_c(\lambda)\otimes V(\vec{\mu})_{\vec{z}}
$ is $\fg^\sigma$-invariant, then
$$
\frac{1}{2}\sum_{1\leq j,\ell\leq q; b_0\in \CB_{\underline{0}}} \left(\frac{1}{p_j}+ \frac{1}{p_\ell}\right) ad^j_{b_0} ad^\ell_{\check{b}_0} \vec{x}(-\vec{p})\otimes v
+\sum_{\ell; b_0\in \CB_{\underline{0}}}
\frac{1}{p_\ell}ad^\ell_{b_0} \vec{x}(-\vec{p})\otimes \check{b}_0 \cdot v=0,$$
where $\fg_{\underline{0}}= \fg^\sigma$ is the $+1$ eigenspace with respect to the operator $\sigma$.
\end{lemma}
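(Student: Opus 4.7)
The plan is to reduce the identity directly to the $\fg^\sigma$-invariance assumption on $\vec{x}(-\vec{p})\otimes v$. The starting point is the invariance relation: for every $b_0\in \fg^\sigma$,
\begin{equation*}
\sum_{\ell=1}^{q} ad^\ell_{b_0}(\vec{x}(-\vec{p}))\otimes v + \vec{x}(-\vec{p})\otimes b_0\cdot v = 0,
\end{equation*}
where on the first term $b_0$ acts as $b_0(0)\in \hat{L}^f(\fg,\sigma)$ through the adjoint representation on $L^-(\fg,\sigma)$.

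The key manipulation is to apply $\frac{1}{p_j}\,ad^j_{\check{b}_0}$ to this relation, sum over $1\le j\le q$, and then sum over $b_0\in\CB_{\underline{0}}$. Since $ad^j_{\check{b}_0}$ only touches the $j$-th wedge factor and not $v$, this yields
\begin{equation*}
\sum_{j,\ell,b_0} \tfrac{1}{p_j}\, ad^j_{\check{b}_0}\,ad^\ell_{b_0}(\vec{x}(-\vec{p}))\otimes v \;+\; \sum_{j,b_0} \tfrac{1}{p_j}\, ad^j_{\check{b}_0}(\vec{x}(-\vec{p}))\otimes b_0\cdot v \;=\; 0.
\end{equation*}
Next I invoke the symmetry of the Casimir element $\sum_{b_0}b_0\otimes \check{b}_0\in \fg^\sigma\otimes\fg^\sigma$, which holds because $\langle\,,\,\rangle$ is a \emph{symmetric} bilinear form. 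This allows the substitution $b_0\leftrightarrow\check{b}_0$ inside the summations, converting the display above into
\begin{equation*}
\sum_{j,\ell,b_0} \tfrac{1}{p_j}\, ad^j_{b_0}\,ad^\ell_{\check{b}_0}(\vec{x}(-\vec{p}))\otimes v \;+\; \sum_{\ell,b_0} \tfrac{1}{p_\ell}\, ad^\ell_{b_0}(\vec{x}(-\vec{p}))\otimes \check{b}_0\cdot v \;=\; 0,
\end{equation*}
which already recovers the right-hand $\check{b}_0\cdot v$ term of the claimed identity.

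The remaining step is to show that the first sum equals its symmetrization
$\frac{1}{2}\sum_{j,\ell,b_0}\bigl(\tfrac{1}{p_j}+\tfrac{1}{p_\ell}\bigr)\,ad^j_{b_0}\,ad^\ell_{\check{b}_0}(\vec{x}(-\vec{p}))\otimes v$. Equivalently, I must prove that the anti-symmetric remainder
\begin{equation*}
S := \sum_{j,\ell,b_0}\bigl(\tfrac{1}{p_j}-\tfrac{1}{p_\ell}\bigr)\, ad^j_{b_0}\, ad^\ell_{\check{b}_0}(\vec{x}(-\vec{p}))\otimes v
\end{equation*}
vanishes. The diagonal terms $j=\ell$ contribute zero. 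For $j\neq\ell$, the operators $ad^j_{b_0}$ and $ad^\ell_{\check{b}_0}$ act on different wedge factors and hence commute; relabeling $j\leftrightarrow\ell$ and then applying the Casimir symmetry $b_0\leftrightarrow\check{b}_0$ one more time shows $S=-S$, so $S=0$.

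The proof is essentially bookkeeping once the invariance relation is multiplied by the $\frac{1}{p_j}$ weights; the only conceptual input is the symmetry of $\sum_{b_0}b_0\otimes \check{b}_0$, used twice. The mild point to watch is that swapping dual bases is legitimate because $\langle\,,\,\rangle$ is symmetric (as opposed to the Hermitian form $\{\,,\,\}$), so no conjugation artifacts arise; this is the only step where I expect a reader to want to pause.
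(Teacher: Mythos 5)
Your proposal is correct and follows essentially the same route as the paper's proof: both start from the $\fg^\sigma$-invariance relation, apply the weighted operators $\tfrac{1}{p_j}ad^j_{\check{b}_0}$ to it, and use the symmetry of the Casimir element $\sum_{b_0} b_0\otimes\check{b}_0$ together with the $j\leftrightarrow\ell$ relabeling (and commutativity of $ad^j$, $ad^\ell$ for $j\neq\ell$) to pass between the unsymmetrized and symmetrized double sums. The only cosmetic difference is that you isolate and kill the antisymmetric remainder $S$, while the paper directly rewrites the symmetrization as $\sum_{j,\ell,b_0}\tfrac{1}{p_\ell}ad^j_{b_0}ad^\ell_{\check{b}_0}$; these are the same computation.
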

\begin{proof}
For any $x\in \fg_{\underline{0}}$ 
\begin{equation}\label{eqn7.9.2}
x\cdot \left( \sum_\ell \frac{1}{p_\ell}ad^\ell_x \vec{x}(-\vec{p}) \otimes v\right)=0.
\end{equation}

To prove this, observe that
\begin{align*}
x\cdot \left( \sum_\ell \frac{1}{p_\ell}ad^\ell_x \vec{x}(-\vec{p}) \otimes v\right)&=
\sum_{j,\ell}\frac{1}{p_\ell} ad^\ell_x ad^j_x \vec{x}(-\vec{p}) \otimes v + \sum_\ell \frac{1}{p_\ell} ad^\ell_x \vec{x}(-\vec{p})\otimes x\cdot v,\\
&\qquad \mbox{since $ad^\ell_x$  and $ad^j_x$ commute}\\
&= \left( \sum_\ell \frac{1}{p_\ell} ad^\ell_x\right) \cdot  \left( ad_x \vec{x}(-\vec{p}) \otimes v + \vec{x}(-\vec{p}) \otimes x\cdot v \right )\\
&= 0 , \qquad \mbox{since $\vec{x}(-\vec{p}) \otimes v $ is $\fg_{\underline{0}}$-invariant.}
\end{align*}
This proves \eqref{eqn7.9.2}. 

Thus, for any fixed $b_0$,
$$\sum_\ell \frac{1}{p_\ell}ad^\ell_{ \check{b}_0} \vec{x}(-\vec{p})\otimes {b}_0 \cdot v=
-\sum_\ell \frac{1}{p_\ell}\left( ad_{b_0}  ad^\ell_{\check{b}_0} \vec{x} (-\vec{p})\right)\otimes v.
$$
Thus, summing over $b_0$, we get the following equation:
\begin{equation}\label{eqn7.9.1}
\sum_{\ell; b_0\in \CB_{\underline{0}}}
 \frac{1}{p_\ell} ad^\ell_{\check{b}_0} \vec{x}(-\vec{p})\otimes {b}_0 \cdot v
= - \sum_{j, \ell; b_0\in \CB_{\underline{0}}}
 \frac{1}{p_\ell} ad ^j_{b_0} ad ^\ell_{\check{b}_0} \vec{x}(-\vec{p})\otimes v.
\end{equation}
Moreover,
\begin{align}
\frac{1}{2}&\sum_{j, \ell; b_0\in \CB_{\underline{0}}}
\left( \frac{1}{p_j}+\frac{1}{p_\ell}\right) \left(ad^j_{b_0} ad^\ell_{\check{b}_0}  \vec{x}(-\vec{p})\right) \otimes v\notag\\
&=\frac{1}{2}\sum_{j,\ell; b_0\in \CB_{\underline{0}}}
\frac{1}{p_j} \left(ad^j_{b_0} ad^\ell_{\check{b}_0}  \vec{x}(-\vec{p})\right) \otimes v
+\frac{1}{2}\sum_{j,\ell; b_0\in \CB_{\underline{0}}}
\frac{1}{p_\ell} \left(ad^j_{b_0} ad^\ell_{\check{b}_0}  \vec{x}(-\vec{p})\right) \otimes v\notag\\
&=\frac{1}{2}\sum_{j,\ell; b_0\in \CB_{\underline{0}}}
\frac{1}{p_\ell} \left(ad^\ell_{b_0} ad^j_{\check{b}_0} \vec{x}(-\vec{p})\right) \otimes v
+\frac{1}{2}\sum_{j,\ell; b_0\in \CB_{\underline{0}}}
\frac{1}{p_\ell} \left(ad^j_{b_0} ad^\ell_{\check{b}_0}  \vec{x}(-\vec{p})\right) \otimes v,\notag\\
& \qquad \text{by interchanging  $j$ and $\ell$ in the first factor},\notag\\
&=\frac{1}{2}\sum_{j,\ell; b_0\in \CB_{\underline{0}}}
\frac{1}{p_\ell} \left(ad^j_{\check{b}_0} ad^\ell_{{b}_0} \vec{x}(-\vec{p})\right) \otimes v
+\frac{1}{2}\sum_{j,\ell; b_0\in \CB_{\underline{0}}}
\frac{1}{p_\ell} \left(ad^j_{b_0} ad^\ell_{\check{b}_0}  \vec{x}(-\vec{p})\right) \otimes v, \label{eqn88}\\
& \qquad \text{since $\sum_{b_0\in \CB_{\underline{0}}} [b_o, [\check{b}_0, y]] = \sum_{b_0\in \CB_{\underline{0}}} [\check{b}_o, [{b}_0, y]]$ for any $y\in \fg$}.\notag
\end{align}
Combining  \eqref{eqn7.9.1} and \eqref{eqn88}, we get the lemma.
\end{proof}

\subsection{ Proof of  the Nakano's identity Theorem \ref{nakano}}\label{nakanoproof}
We first calculate $\overline{\Square}$. By Proposition \ref{prop7.5}
and Lemma \ref{lem7.6},
\begin{align*}
\overline{\Square}&\left(\vec{x}(-\vec{p})\otimes v\right)=\\
& +\sum_{b(-k)\in \hB} \frac{1}{k}\overline{ad}_{\check{b}(k)}\cdot\vec{x}(-\vec{p})\otimes b(-k)\cdot v
+\sum_{b(-k)\in \hB} \frac{1}{k}\vec{x}(-\vec{p})\otimes b(-k)\check{b}(k)\cdot v\\
&-\sum_{\ell; b(-k)\in \hB} \frac{1}{p_\ell}ad^\ell_{b(-k)}\vec{x}(-\vec{p})\otimes \left( \check{b}(k)\cdot v_1\right) \otimes v_2
-\sum_{\ell; b_0\in \CB_{\underline{0}}} \frac{1}{p_\ell}ad^\ell_{b_0}\vec{x}(-\vec{p})\otimes \left( \check{b}_0\cdot v_1\right) \otimes v_2\\
&-\sum_{\ell; b(-k)\in \hB}\frac{1}{p_\ell}\overline{ad}^\ell_{\check{b}(k)}\vec{x}(-\vec{p})\otimes \left( b(-k)\cdot v_1\right) \otimes v_2
+\frac{qc}{m} \vec{x}(-\vec{p})\otimes v_1 \otimes v_2\\
&-\sum_{\ell; b_0\in \CB_{\underline{0}}} \frac{1}{p_\ell}ad^\ell_{b_0} \vec{x}(-\vec{p})\otimes v_1 \otimes \left((z\bar{z})^{p_\ell} \check{b}_0\right) \cdot v_2\\
&-\sum_{\ell; a(-n)\in \hB}\frac{1}{p_\ell}\left(ad^\ell_{a(-n)}\vec{x}(-\vec{p})\right)\otimes v_1 \otimes \left((z\bar{z})^{p_\ell} z^n \check{a}\right)\cdot v_2 - \sum_{\ell; a(-n)\in \hB}\frac{1}{p_\ell}\left(\overline{ad}^\ell_{\check{a}(n)}\vec{x}(-\vec{p})\right)\otimes v_1 \otimes \left((z\bar{z})^{p_\ell-n}\bar{z}^n a\right)\cdot v_2\\
&+\sum_{\ell; b(-k)\in \hB}\left(\frac{1}{p_\ell}+\frac{1}{k}\right)ad^\ell_{b(-k)}\vec{x}(-\vec{p})\otimes \check{b}(k)\cdot v
+\sum_{j\neq \ell; b(-k)\in \hB}\left(\frac{1}{k}-\frac{1}{p_\ell}\right)ad^j_{b(-k)}\overline{ad}^\ell_{\check{b}(k)}\vec{x}(-\vec{p})\otimes v\\
& +\frac{1}{2}
\sum_{\ell; b(-k)\in \hB} {\left(\frac{1}{p_\ell-k}+\frac{1}{k}\right)}ad^\ell_{b(-k)}\overline{ad}^\ell_{\check{b}(k)} \vec{x}(-\vec{p})\otimes v
-\frac{1}{2}\sum_{j\neq \ell; b_0\in \CB_{\underline{0}}}
\left(\frac{1}{p_j} +\frac{1}{p_\ell}\right)ad^j_{b_0}ad^\ell_{\check{b}_0}\vec{x}(-\vec{p})\otimes v .
\end{align*}
Observe that for any $1\leq \ell\leq q$, 
$$\sum_{b(-k)\in \hB} {\left(\frac{1}{p_\ell-k}+\frac{1}{k}\right)}ad^\ell_{b(-k)}\overline{ad}^\ell_{\check{b}(k)} \vec{x}(-\vec{p})\otimes v =
\sum_{b(-k)\in \hB} {\left(\frac{1}{2k}\right)}ad^\ell_{b(-k)}\overline{ad}^\ell_{\check{b}(k)} \vec{x}(-\vec{p})\otimes v.$$
Thus, using the expression for $\Square$ as in Proposition \ref{newprop6.5}, we get the following:
\vskip1ex

For $ \vec{x}(-\vec{p})\otimes v_1 \otimes v_2 \in \wedge^q\left(L^-\left(\fg,\sigma\right)\right)\otimes \mathscr{H}_c(\lambda)\otimes V_{\vec{z}}(\vec{\mu}),$ we have
\begin{align*}
&\left( \overline{\Square}-\Square\right)\left(\vec{x}(-\vec{p})\otimes v_1 \otimes v_2\right)\\
&=\frac{qc}{m} \left( \vec{x}(-\vec{p})\otimes v_1 \otimes v_2\right)
+\sum_{\ell; b(-k)\in \hB}\frac{1}{p_\ell}\left(ad^\ell_{b(-k)} \vec{x}(-\vec{p})\right)\otimes v_1 \otimes \check{b}(k)
\cdot v_2\\
&-\sum_{\ell;  b_0\in \CB_{\underline{0}}} \frac{1}{p_\ell}ad^\ell_{b_0} \vec{x}(-\vec{p})\otimes v_1
 \otimes \left(({z}\bar{z})^{p_\ell} \check{b}_0 \right)\cdot v_2\\
 &-\sum_{\ell; a(-n)\in \hB}\frac{1}{p_\ell}\left(ad^\ell_{a(-n)}\vec{x}(-\vec{p})\right)\otimes v_1 \otimes \left((z\bar{z})^{p_\ell} z^n \check{a}\right)\cdot v_2 - \sum_{\ell; a(-n)\in \hB}\frac{1}{p_\ell}\left(\overline{ad}^\ell_{\check{a}(n)}\vec{x}(-\vec{p})\right)\otimes v_1 \otimes \left((z\bar{z})^{p_\ell-n}\bar{z}^n a\right)\cdot v_2\\ 
 &+\sum_{\ell; b(-k)\in \hB} \frac{1}{p_\ell}{ad}^\ell_{b(-k)}\overline{ad}^\ell_{\check{b}(k)}\vec{x}(-\vec{p})\otimes v
-\frac{1}{2} \sum_{j\neq \ell; b_0\in \CB_{\underline{0}}} \left(\frac{1}{p_j}+\frac{1}{p_\ell}\right)ad^j_{b_0} ad^\ell_{\check{b}_0} \vec{x}(-\vec{p})\otimes v\\
&+\sum_{\ell; b(-k)\in \hB} \frac{1}{p_\ell}\overline{ad}^\ell_{\check{b}(k)} \vec{x}(-\vec{p})\otimes v_1 \otimes b (-k)\cdot v_2
-\sum_{\ell; b_0\in \CB_{\underline{0}}} \frac{1}{p_\ell}ad^\ell_{b_0}\vec{x}(-\vec{p})\otimes \check{b}_0 \cdot v_1 \otimes v_2,
\end{align*}
since, for any $1\leq \ell\leq q$,
$$\sum_{b(-k)\in \hB}\frac{1}{p_\ell-k}\left(ad^\ell_{b(-k)} \overline{ad}^\ell_{\check{b}(k)}
\vec{x}(-\vec{p})\right)= \sum_{a(-n)\in \hB}\frac{1}{n}\left(ad^\ell_{a(-n)} \overline{ad}^\ell_{\check{a}(n)}
\vec{x}(-\vec{p})\right),$$
as can be seen by using Lemma  \ref{lem_herm2}.

Now, from the above expression of $ \overline{\Square}-\Square$, 
using Lemma \ref{6.2.10} and Proposition \ref{prop_casimir_id}, we get the following, which proves  the Nakano's identity Theorem \ref{nakano}.

For $\vec{x}(-\vec{p})\otimes v_1\otimes v_2 \in \left[ \wedge^q\left(L^-(\mathfrak{g},\sigma)\right)\otimes \mathscr{H}_c(\lambda) \otimes V_{\vec{z}}(\vec{\mu})\right]^{\mathfrak{g}^\sigma},$
\begin{align*}
\left(\overline{\Square}-\Square\right)& \left( \vec{x}(-\vec{p})\otimes v_1\otimes v_2\right) =
 \frac{(c+2\check{h})q}{m} \vec{x} (-\vec{p})\otimes v\\
&+ \sum_{\ell; b(-k)\in \hB} \frac{1}{p_\ell} ad^\ell_{b(-k)} \vec{x}(-\vec{p})\otimes v_1
\otimes  \left((1-(z\bar{z})^{p_\ell}) z^k \check{b}\right) \cdot v_2\\
&+ \sum_{\ell; b(-k)\in \hB }
 \frac{1}{p_\ell} \overline{ad}^\ell_{\check{b}(k)}\vec{x}(-\vec{p})\otimes v_1
\otimes \left((1-(z\bar{z})^{p_\ell-k}) 
(\bar{z})^{k} b\right) \cdot v_2\\
&+ \sum_{\ell; b_0\in \CB_{\underline{0}}}
 \frac{1}{p_\ell} ad^\ell_{b_0} \vec{x}(-\vec{p}) \otimes v_1 \otimes \left(()1-(z\bar{z})^{p_\ell}) \check{b}_0\right) \cdot v_2.
\end{align*}
Observe that we have obtained Theorem \ref{nakano} for {\it special} automorphisms $\sigma$ of $\fg$ since its proof given above uses 
Proposition \ref{prop_casimir_id}, which is proved only for special automorphisms $\sigma$.


\end{document}